\title{The geometry of some generalized affine Springer fibers}
\author{Jingren Chi}
\address{Department of Mathematics, The University of Chicago,
5734 S. University Ave., Chicago, IL, 60637}
\email{chijingren@gmail.com}
\newcommand{\into}{\hookrightarrow}
\newcommand{\fg}{\mathfrak{g}}
\newcommand{\bA}{\mathbb{A}}
\newcommand{\bB}{\mathbb{B}}
\newcommand{\fc}{\mathfrak{c}}
\newcommand{\fC}{\mathfrak{C}}
\newcommand{\fD}{\mathfrak{D}}
\newcommand{\sfD}{\mathsf{D}}
\newcommand{\ga}{\gamma}
\newcommand{\bG}{\mathbb{G}}
\newcommand{\cH}{\mathcal{H}}
\newcommand{\cI}{\mathcal{I}}
\newcommand{\cJ}{\mathcal{J}}
\newcommand{\la}{\lambda}
\newcommand{\La}{\Lambda}
\newcommand{\bN}{\mathbb{N}}
\newcommand{\cN}{\mathcal{N}}
\newcommand{\cO}{\mathcal{O}}
\newcommand{\cP}{\mathcal{P}}
\newcommand{\sfP}{\mathsf{P}}
\newcommand{\bQ}{\mathbb{Q}}
\newcommand{\ft}{\mathfrak{t}}
\newcommand{\bZ}{\mathbb{Z}}
\def\spec{\mathop{\rm Spec}\nolimits}
\def\Hom{\mathop{\rm Hom}\nolimits}
\newtheorem{thm}[subsubsection]{Theorem}
\newtheorem*{thm*}{Theorem}
\newtheorem{cor}[subsubsection]{Corollary}
\newtheorem{lem}[subsubsection]{Lemma}
\newtheorem{prop}[subsubsection]{Proposition}
\theoremstyle{definition}
\newtheorem{defn}[subsubsection]{Definition}
\newtheorem{conjecture}[subsubsection]{Conjecture}
\newtheorem*{conjecture*}{Conjecture}
\theoremstyle{remark}
\newtheorem{rem}[subsubsection]{Remark}
\numberwithin{equation}{section}
\newcommand{\introthmname}{}
\newtheorem{introthminn}{\introthmname}
\begin{document}
\begin{abstract}
We study basic geometric properties of some group analogue of affine Springer fibers and compare with the classical Lie algebra affine Springer fibers. The main purpose is to formulate a conjecture that relates the number of irreducible components of such varieties for a reductive group $G$ to certain weight multiplicities defined by the Langlands dual group $\hat{G}$. We prove our conjecture in the case of unramified conjugacy class.
\end{abstract}

\maketitle
\section*{Introduction}
\subsection{Background and motivation}
The ``generalized affine Springer fibers" in the title refers to sets of the following form
\[X_\gamma^\la=\{g\in G(F)/G(\cO)| g^{-1}\gamma g\in G(\cO)\varpi^\la G(\cO)\}\]
where 
\begin{itemize}
\item $G$ is a split connected reductive algebraic group over a field $k$;
\item $F=k((\varpi))$ is the field of Laurent series with coefficients in $k$ and $\cO=k[[\varpi]]$ is the ring of power series;
\item $\gamma\in G(F)$ is a regular semisimple element;
\item $\la:\bG_m\to T$ is a cocharacter of a maximal torus $T$ of $G$ and 
\[\varpi^\la:=\la(\varpi)\in G(F).\]
\end{itemize}
When $k$ is a finite field, the set $X_\ga^\la$ arises naturally in the study of orbital integrals of functions in the spherical Hecke algebra $\cH(G(F), G(\cO))$, which consists of $G(\cO)$-biinvariant locally constant functions with compact support on $G(F)$.\par 
It turns out that $X_\gamma^\la$ can be realized as the set of $k$-rational points of some algebraic variety over $k$. This kind of variety has previously been studied by Kottwitz-Viehmann in \citep{KoV} and Lusztig in \citep{Lu15}. The adjective ``generalized" refers to the fact that  the varieties $X_\ga^\la$ could be viewed as group analogue of some affine Springer fibers for Lie algebras studied by Kazhdan and Lusztig in \cite{KL}:
\[X_\ga=\{g\in G(F)/G(\cO)|\mathrm{ad}(g)^{-1}\gamma\in\mathfrak{g}(\cO)\}.\]
Here $\fg$ is the Lie algebra of $G$, $\ga\in\fg(F)$ is a regular semisimple element and ``ad" denotes the adjoint action of $G$ on $\fg$.\par 
Basic geometric properties of these Lie algebra affine Springer fiber $X_\ga$ (dimension, irreducible components etc.) have been well understood through the works of Kazhdan and Lusztig \cite{KL}, Bezrukavnikov \cite{Be96}, Ng\^o \cite{Ngo10}. A key ingredient in their approach is the symmetry on $X_\gamma$ arising from the centralizer $G_\ga(F)$. More precisely, $X_\ga$ has an action of a commutative algebraic group $P_\gamma$ locally of finite type over $k$, which is defined as a quotient of the loop group $G_\ga(F)$. There is an open dense subset $X_\ga^{\mathrm{reg}}$ of $X_\ga$ (the ``regular locus") on which $P_\ga$ acts simply transitively. Hence the dimension of $X_\ga$ equals the dimension of $P_\ga$, the latter of which has been calculated by Bezrukavnikov in \cite{Be96}. Moreover, $X_\ga$ is equidimensional and its set of irreducible components is in bijection with $\pi_0(P_\ga)$, the set of connected components of $P_\ga$. \par 
We would like to generalize the above picture for Lie algebras to the group analogue $X_\ga^\la$. In this case, one can still construct an action of a commutative algebraic group $P_\ga$ and define an open subset $X_\ga^{\la,\mathrm{reg}}$ (the ``regular locus") having the same dimension as $P_\ga$.  However, there are the following notable differences from the Lie algebra situation:
\begin{itemize}
\item In general the action of $P_\ga$ on $X_\ga^{\la,\mathrm{reg}}$ is not transitive.
\item A more serious problem is that in general the ``regular locus" $X_\ga^{\la,\mathrm{reg}}$ is not dense in $X_\ga$ and there might be irreducible components disjoint from $X_\ga^{\la,\mathrm{reg}}$.
\end{itemize}
Thus $X_\ga^\la$ may have more irreducible components than $P_\ga$ and to calculate its dimension, it is not sufficient to calculate the dimension of $P_\ga$. 
\subsection{Main results}
The first goal of this paper is to establish some basic geometric properties of $X_\ga^\la$. We prove a dimension formula of $X_\ga^\la$ when $\ga$ is unramified. For general $\ga$, we establish the dimension formula for the regular open subset $X_\ga^{\la,\mathrm{reg}}$. We leave the proof of the dimension formula in full generality to \citep{BC17}. A previous attempt on dimension formula has been made in \cite{Bou15}, but there are some gaps in \textit{loc. cit.}, see the discussion in \citep{BC17}.

\begin{thm*}
$X_\ga^\la$ is a finite dimensional $k$-scheme locally of finite type. There is an equidimensional open subscheme $X_\ga^{\la,\mathrm{reg}}\subset X_\ga^\la$ whose dimension is given by
\[\dim X_\ga^{\la,\mathrm{reg}}=\langle\rho,\la\rangle+\frac{1}{2}(d(\ga)-c(\ga))\]
where 
\begin{itemize}
\item $\rho$ is half sum of the positive roots for $G$;
\item $d(\ga)=\mathrm{val}(\det(\mathrm{Id}-\mathrm{ad}_\ga:\fg(F)/\fg_{\ga}(F)\to\fg(F)/\fg_{\ga}(F)))$ is the discriminant valuation of $\ga$;
\item $c(\ga)=\mathrm{rank}(G)-\mathrm{rank}_F(G_\ga)$ is the difference between the dimension of the maximal torus of $G$ and the dimension of the maximal $F$-split subtorus of the centralizer $G_\ga$.
\end{itemize}
\end{thm*}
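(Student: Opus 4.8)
The strategy is to transpose the Ng\^o--Bezrukavnikov analysis of Lie algebra affine Springer fibers to the group setting, the new ingredient being the level datum $\la$. First I would realize $X_\ga^\la$ as a locally closed subset of the affine Grassmannian $\mathrm{Gr}_G=G(F)/G(\cO)$: the condition $g^{-1}\ga g\in G(\cO)\varpi^\la G(\cO)$ is exactly the condition that the points $[g]$ and $\ga\cdot[g]$ of $\mathrm{Gr}_G$ (for the left translation action of $\ga$) be in relative position $\la$, which is locally closed and so equips $X_\ga^\la$ with a scheme structure. For finite-dimensionality I would bound the Schubert strata that $X_\ga^\la$ meets: if $[g]\in\mathrm{Gr}_G^\mu\cap X_\ga^\la$, then the constraint that $[g]$ and $\ga\cdot[g]$ differ by exactly $\la$, weighed against the fixed quantity $d(\ga)=\mathrm{val}\det(\mathrm{Id}-\mathrm{ad}_\ga)$ attached to the regular semisimple $\ga$, forces an a priori bound on $\|\mu\|$ in terms of $\ga$ and $\la$ -- this is the boundedness argument of Kazhdan--Lusztig, carried out for groups by Kottwitz--Viehmann. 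Hence $X_\ga^\la$ is contained in a finite union of the finite-type Schubert varieties $\overline{\mathrm{Gr}_G^\mu}$, so it is a finite-dimensional $k$-scheme locally of finite type.

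Next, the regular locus. Let $\chi\colon G\to\fc:=G/\!/G\cong T/\!/W$ be the Chevalley--Steinberg morphism and $a:=\chi(\ga)$; since $\ga$ is regular semisimple there is a regular-centralizer group scheme $J_a$, generically the centralizer torus $G_\ga$, which can be made to live over $\spec\cO$ by working inside the Vinberg monoid of $G$ (for which $\varpi^\la$ is integral and the relevant adjoint quotient stays integral). I would define $X_\ga^{\la,\mathrm{reg}}\subset X_\ga^\la$ to be the open locus where the stabilizer $G_\ga(F)\cap gG(\cO)g^{-1}$ of $[g]$ under the left $G_\ga(F)$-action equals its minimal possible value $K_\ga$ (a bounded subgroup of $G_\ga(F)$); equivalently, where $\delta_g:=g^{-1}\ga g$ is ``as regular as $\varpi^\la$ permits'' relative to $G(\cO)$. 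Because $X_\ga^\la$ is bounded, the $G_\ga(F)$-action factors through a commutative group scheme $P_\ga:=G_\ga(F)/K_\ga$ locally of finite type; on $X_\ga^{\la,\mathrm{reg}}$ every stabilizer equals $K_\ga$, so each orbit is a $P_\ga$-torsor. Thus $X_\ga^{\la,\mathrm{reg}}$ is a disjoint union of $P_\ga$-torsors, in particular equidimensional with $\dim X_\ga^{\la,\mathrm{reg}}=\dim P_\ga$, and everything is reduced to the identity $\dim P_\ga=\langle\rho,\la\rangle+\tfrac12(d(\ga)-c(\ga))$.

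To compute $\dim P_\ga$ I would separate two contributions. The case $\la=0$, where $X_\ga^{0,\mathrm{reg}}$ is a single torsor under $\mathrm{Gr}_{J_a}=G_\ga(F)/J_a(\cO)$, supplies the $\la$-independent part $\dim\mathrm{Gr}_{J_a}=\tfrac12(d(\ga)-c(\ga))$ via Bezrukavnikov's dimension formula (applied to the local Picard $\mathrm{Gr}_{J_a}$, with the invariants of the theorem). Introducing $\varpi^\la$ enlarges the acting group -- equivalently shrinks $K_\ga$ -- by a further $\langle\rho,\la\rangle=\tfrac12\langle2\rho,\la\rangle$, half of $\dim\mathrm{Gr}_G^\la$ (one may assume $\la$ dominant). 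Adding the two gives
\[\dim X_\ga^{\la,\mathrm{reg}}=\dim P_\ga=\langle\rho,\la\rangle+\tfrac12\bigl(d(\ga)-c(\ga)\bigr).\]

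\textbf{Main obstacle.} The serious point is precisely this last ``half-dimensionality'' of the $\varpi^\la$-contribution and, behind it, the identification of $K_\ga$ with a monoid-theoretic regular centralizer. For unramified $\ga$ one can conjugate $\ga$ into $T(F)$ and compute directly: $X_\ga^\la$, the group $P_\ga$ and the orbit decomposition reduce to an explicit lattice problem which factors as a product of rank-one computations over the roots, giving the dimension and equidimensionality of all of $X_\ga^\la$ (not just its regular locus) -- this is the route I would take in the unramified case. For general $\ga$, the construction of $J_a$ over $\cO$ and the determination of $K_\ga$ need the Vinberg monoid and a Ng\^o-style product formula; and the non-regular part of $X_\ga^\la$, which may contribute extra components of a priori larger dimension, is exactly why the clean statement is confined to the open subscheme $X_\ga^{\la,\mathrm{reg}}$.
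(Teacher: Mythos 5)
Your overall architecture is the right one (regular locus as a union of torsors under a symmetry group, explicit treatment of the unramified case, reduction of the general case), but two steps as written would fail.

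First, the finiteness argument. You claim that membership in $X_\ga^\la$ forces an a priori bound on the Schubert stratum of $[g]$, so that $X_\ga^\la$ sits inside finitely many $\overline{\mathrm{Gr}_G^\mu}$. This is false: already for $\la=0$ and $\ga\in T(\cO)^{\mathrm{rs}}$ split, every lattice point $\varpi^\nu K$ lies in $X_\ga^{0}$, so $X_\ga^\la$ meets infinitely many Schubert cells and is \emph{not} of finite type --- it is only \emph{locally} of finite type, carrying a free action of the lattice $\La_\ga$ (cocharacters of the maximal split subtorus of $G_\ga$). The correct statement, and the one the paper proves, is that $X_\ga^\la$ is covered by $\La_\ga$-translates of a single finite-type (indeed projective) subscheme $Z$; this is obtained by first treating the unramified case, where $X_\ga^\la$ is stratified by $\La$-translates of the finite-type piece $Y_\ga^\la=\{u\in U(F)/U(\cO): u^{-1}\ga u\in K\varpi^\la K\}$, and then descending from a splitting field $F'$ by taking $\mathrm{Gal}(F'/F)$-fixed points. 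Your argument needs to be replaced by (some version of) this lattice-quotient argument; a bound ``on $\|\mu\|$'' simply does not exist.

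Second, the identity $\dim\cP_a=\langle\rho,\la\rangle+\tfrac12(d(\ga)-c(\ga))$ is asserted rather than proved: the ``separation into a $\la$-independent Bezrukavnikov term plus $\langle\rho,\la\rangle$, half of $\dim\mathrm{Gr}_G^\la$'' is exactly the content of the theorem, and no mechanism is given for why the $\la$-contribution is precisely $\langle\rho,\la\rangle$ (note also that the group scheme $J_a$ itself depends on $\la$ through the monoid $V_G^\la$, so the two ``contributions'' do not literally decouple). In the paper the $\langle\rho,\la\rangle$ term in the unramified case comes from the Mirkovi\'c--Vilonen dimension formula $\dim(S_\mu\cap\mathrm{Gr}_\la)=\langle\rho,\la+\mu\rangle$, via a smooth surjection $f_\ga^{-1}(K\varpi^\la K\varpi^{-\mu}\cap LU)\to K\varpi^\la K\varpi^{-\mu}\cap LU$ with fibers of dimension $r(\ga)=\tfrac12 d(\ga)+\langle\rho,\mu\rangle$; your ``product of rank-one computations over the roots'' accounts only for the fiber term $r(\ga)$, not for the MV-cycle input. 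For ramified $\ga$ one then computes $\dim\cP_a=\dim_k\bigl(\ft\otimes_k(A^\flat/A)\bigr)^W$ via the Galois description of the regular centralizer, base changes to a splitting field, and extracts $c(\ga)$ from the eigenvalues of the element $w_\ga\in W$ acting on $\ft$. You correctly flag this as the main obstacle, but as it stands the proposal contains no proof of the dimension formula, only a restatement of it.
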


This is proved in Theorem~\ref{finite-type-thm} and Theorem~\ref{dim-reg-locus-thm}.\par 
Regarding the dimension of $X_\ga^\la$ itself, we mention the following

\begin{thm*}[joint with A. Bouthier \citep{BC17}]
$X_\ga^\la$ is equidimensional of dimension
\[\dim X_\ga^\la=\langle\rho,\la\rangle+\frac{1}{2}(d(\ga)-c(\ga))\]
\end{thm*}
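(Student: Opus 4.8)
The plan is to bootstrap from the dimension formula for the regular locus, already established in \thmref{dim-reg-locus-thm}, by a global-to-local argument. Since $X_\ga^{\la,\mathrm{reg}}$ is a nonempty open subscheme of $X_\ga^\la$ that is equidimensional of dimension $\langle\rho,\la\rangle+\tfrac12(d(\ga)-c(\ga))$, the reverse inequality $\dim X_\ga^\la\ge\langle\rho,\la\rangle+\tfrac12(d(\ga)-c(\ga))$ is automatic, and what remains is (i) the upper bound $\dim X_\ga^\la\le\langle\rho,\la\rangle+\tfrac12(d(\ga)-c(\ga))$ and (ii) equidimensionality, i.e.\ the assertion that \emph{every} irreducible component of $X_\ga^\la$, in particular each of those disjoint from the regular locus, has this same dimension. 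After the standard reductions (a $z$-extension to make the derived group simply connected together with splitting off a central torus, and a truncation to a quasi-compact open on which only finitely many $\cP_\ga$-orbits of components occur), both statements will be extracted from an equidimensionality result for a single fiber of a global fibration.

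That global object is a multiplicative Hitchin-type fibration built from the Vinberg enveloping monoid, in the spirit of \citep{Bou15,BC17}. Fix a smooth projective curve $C$ over $k$, a rational point $v_0\in C$, and the Vinberg monoid $\overline{G}$ (a normal reductive monoid with unit group $G\times\bG_m^{r}$ for a suitable $r$, whose abelianization encodes dominant cocharacters). For a sufficiently dominant $\La^+$-valued degree $D$ on $C$ with $v_0$-component at least $\la$, let $\cM$ be the moduli stack of $G$-torsors on $C$ equipped with a section of the associated monoid bundle whose boundary divisor is $\le D$, is $\le\la$ at $v_0$, and whose invertibility locus is cofinite; let $\cA$ be the corresponding Hitchin base (sections over $C$ of the twisted Chevalley base of $\overline{G}$), with Hitchin map $h\colon\cM\to\cA$. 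Choose $a\in\cA(k)$ whose local invariant at $v_0$ is that of $\ga$ and which is unramified, with prescribed boundary, at the remaining points. Ng\^o's product formula, in the monoid incarnation of \citep{BC17}, then presents the Hitchin fiber $\cM_a=h^{-1}(a)$ as the quotient by a smooth commutative Picard stack $\cP_a$ of a restricted product $\prod_{v\in C}M_{a,v}$ of local multiplicative affine Springer fibers, the factor at $v_0$ being our $X_\ga^\la$ and every remaining factor being either a point or an \emph{unramified} generalized affine Springer fiber of the sort analysed elsewhere in this paper.

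The heart of the argument is then to prove that $\cM_a$ is equidimensional of dimension $\dim\cP_a$. For $D$ sufficiently ample, deformation theory shows $\cM$ is smooth over $k$ (the obstruction cohomology vanishes), hence of pure dimension; over a dense open of $\cA$ the fibers are then smooth of the expected (generic) relative dimension, which fixes that value. To propagate this to the special fiber one invokes the $\delta$-regularity codimension estimate: the stratification of $\cA$ by $\delta(a):=\dim\cP_a$ satisfies $\mathrm{codim}_\cA\bigl(\cA^{\ge\delta}\bigr)\ge\delta$. Combined with purity of $\cM$ and with the lower bound for the $v_0$-factor supplied by \thmref{dim-reg-locus-thm} — the remaining local factors being elementary — this forces $\dim\cM_a=\dim\cP_a$ with no component of smaller dimension. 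Reading this back through the product formula, while keeping track of the Riemann--Roch identity on $C$ that rewrites $\dim\cP_a$ in terms of $\dim\mathrm{Pic}(C)$, of $\langle\rho,\la\rangle$, and of the local discriminant contributions $\tfrac12(d(\ga_v)-c(\ga_v))$, yields simultaneously the equality $\dim X_\ga^\la=\langle\rho,\la\rangle+\tfrac12(d(\ga)-c(\ga))$ and the equidimensionality of $X_\ga^\la$: indeed the transition $\prod_v M_{a,v}\to\cM_a$ is a smooth surjection with equidimensional fibers, so equidimensionality of $\cM_a$ together with that of $\cP_a$ and of the factors $M_{a,v}$ for $v\ne v_0$ forces it for the $v_0$-factor.

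I expect the $\delta$-regularity codimension estimate in the monoid setting to be the principal obstacle: unlike the Lie algebra case, the Chevalley base of $\overline{G}$ degenerates along the boundary, so the behaviour of the discriminant and of the relevant Kostant-type (quasi-)section near the most singular points of $\cA$ must be controlled either through a careful normalization of the monoid-valued Hitchin base or through an \emph{ad hoc} deformation argument bounding the codimension of the bad strata. Two further points require care: the product formula must hold integrally, which as in Ng\^o is arranged by adjoining enough level structure (or passing to a $\bZ$-form) so as to kill the torsion in $\pi_0(\cP)$; and since $k$ need not be algebraically closed and all the spaces involved are only locally of finite type, one works throughout on quasi-compact opens and, where necessary, over a finite separable extension of $k$ followed by Galois descent.
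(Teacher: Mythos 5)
Your proposal follows essentially the same route as the actual proof: the paper itself reduces the theorem, via \thmref{dim-reg-locus-thm}, to the equality $\dim X_\ga^{\la,\mathrm{reg}}=\dim X_\ga^\la$ together with equidimensionality, and defers exactly that step to the companion paper \citep{BC17}, which establishes it by means of the multiplicative (Vinberg-monoid) Hitchin fibration, Ng\^o's product formula and $\delta$-regularity, just as you outline. The points you flag as the principal obstacles --- smoothness of the total space for sufficiently ample boundary and the codimension estimate for the $\delta$-strata of the monoid Hitchin base --- are precisely the content of \citep{BC17}, so your sketch is a faithful reconstruction of the intended argument rather than an alternative to it.
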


By the previous result, it remains to show that
\[\dim X_\ga^{\la,\mathrm{reg}}=\dim X_\ga^\la\]
which is done in \citep{BC17}. We remark that the argument of Kazhdan-Lusztig in \citep{KL} does not generalize to our situation since otherwise it would imply that the complement of the regular open subset has strictly smaller dimension (see \citep[Proposition 3.7.1]{Ngo10}), which in our situation may not be true due to the possible existence of irregular components. In general, actually most components of $X_\ga^\la$ will be irregular, see Remark~\ref{regular-components-rem}.\par 
This natually leads to the question of determining the number of irreducible components of $X_\ga^\la$, which is our second goal. We will formulate a conjecture on the number of irreducible components of $X_\ga^\la$ and prove the conjecture in the case where $\ga$ is unramified (or split) conjugacy class. One formulation of the conjecture involves the Newton point $\nu_\ga\in (X_*(T)\otimes\bQ)^+$ of $\ga$, which is an element in the dominant rational coweight cone. For the precise definition, see \S~\ref{Newton-point-section}. We show in \ref{nonempty-prop} that $X_\ga^\la$ is nonempty if and only if $\nu_\ga\le_\bQ\la$ in the sense that $\la-\nu_\ga$ is a $\bQ$ linear combination of positive coroots with non-negative coefficients. Then by the discussion in \S\ref{irr-components-conjecture-section} there exists a unique \emph{smallest} dominant \emph{integral} coweight $\mu$ such that $\nu_\ga\le_\bQ\mu$ and $\mu\le\la$.
\begin{conjecture*}[Conjecture~\ref{irr-components-conjecture}]
Let $\mu$ be as above. The number of $G_\ga(F)$-orbits on the set of irreducible components of $X_\ga^\la$ equals to $m_{\la\mu}$, which is the dimension of $\mu$-weight space in the irreducible representation $V_\lambda$ of the Langlands dual group $\hat G$ with highest weight $\lambda$.  
\end{conjecture*}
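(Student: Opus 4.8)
The plan is to prove the conjecture when $\ga$ is unramified by transporting the count of irreducible components into the affine Grassmannian $\mathrm{Gr}_G=G(F)/G(\cO)$, where it becomes a count of Mirkovi\'c--Vilonen cycles and hence a weight multiplicity for $\hat G$. The first reduction is to the split case: after an unramified base change $k\rightsquigarrow k_r$ and conjugation by $G_\ga(F)$ one may assume $\ga\in T(F)$ with $T$ a split maximal torus and $G_\ga=T$, and one then descends at the end. In the split case $\nu_\ga\in X_*(T)$ is already integral, and since $X_\ga^\la\neq\varnothing$ forces $\la-\nu_\ga$ into the coroot lattice, the distinguished coweight $\mu$ of the statement is simply $\nu_\ga$. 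Fix a Borel $B=TN\supset T$ and use the semi-infinite (Iwasawa) decomposition $\mathrm{Gr}_G=\bigsqcup_{\eta\in X_*(T)}S_\eta$ with $S_\eta=N(F)\varpi^\eta G(\cO)/G(\cO)$ to stratify $X_\ga^\la=\bigsqcup_\eta\bigl(X_\ga^\la\cap S_\eta\bigr)$.

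The point of this stratification is its compatibility with the $G_\ga(F)=T(F)$-symmetry. Because $T(F)$ is abelian, left translation by $\varpi^\eta$ preserves $X_\ga^\la$ and carries $S_\mu$ isomorphically onto $S_{\mu+\eta}$, so the cocharacter lattice permutes the cells $\{X_\ga^\la\cap S_\eta\}$ simply transitively, while the connected group $T(\cO)$ preserves each cell and hence acts trivially on its set of irreducible components. Combining this with the equidimensionality of $X_\ga^\la$ (joint work with A.\ Bouthier, \cite{BC17}) and the fact that each irreducible component of $X_\ga^\la$ dominates a unique semi-infinite cell, one obtains that the number of $G_\ga(F)$-orbits on $\mathrm{Irr}(X_\ga^\la)$ equals the number of top-dimensional irreducible components of a single cell $X_\ga^\la\cap S_\mu$. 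It remains to identify this number with $m_{\la\mu}$.

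To analyze the cell I would use the substitution $g=n\varpi^\eta$, $n\in N(F)$: writing $\ga=\varpi^{\nu_\ga}u$ with $u\in T(\cO)$ and using that $\ga$ normalizes $N$ and commutes with $\varpi^\eta$, the defining condition $(n\varpi^\eta)^{-1}\ga(n\varpi^\eta)\in G(\cO)\varpi^\la G(\cO)$ rewrites as $m_\eta(n)\,\varpi^{\nu_\ga}G(\cO)\in\mathrm{Gr}^\la$, where $m_\eta(n)=\varpi^{-\eta}\bigl(n^{-1}\ga n\ga^{-1}\bigr)\varpi^\eta\in N(F)$ and $\mathrm{Gr}^\la=G(\cO)\varpi^\la G(\cO)/G(\cO)$. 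The assignment $n\mapsto m_\eta(n)$ is a twisted-conjugation map on $N(F)$; regular semisimplicity of $\ga$ makes it injective with image of finite codimension, and for a suitable choice of $\eta$ the cell $X_\ga^\la\cap S_\eta$ becomes an affine-space bundle over $\overline{\mathrm{Gr}^\la}\cap S_{\nu_\ga}$. Since affine-space bundles preserve the poset of irreducible components and shift all dimensions by a constant, the number of top-dimensional components of the cell equals the number of irreducible components of $\overline{\mathrm{Gr}^\la}\cap S_{\nu_\ga}$, which by the Mirkovi\'c--Vilonen theorem (geometric Satake) is exactly $\dim V_\la(\mu)=m_{\la\mu}$.

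The main obstacle I anticipate is making the last comparison precise: the twisted-conjugation map $m_\eta$ does not literally identify the two strata, because the ind-scheme quotients $N(F)/\bigl(N(F)\cap\varpi^\eta G(\cO)\varpi^{-\eta}\bigr)$ and $N(F)/\bigl(N(F)\cap\varpi^{\nu_\ga}G(\cO)\varpi^{-\nu_\ga}\bigr)$ do not match on the nose, so one must either run a limiting argument with $\eta$ taken very dominant, or show the discrepancy is harmless using equidimensionality of both $X_\ga^\la$ and $\overline{\mathrm{Gr}^\la}\cap S_{\nu_\ga}$ together with Bezrukavnikov-type dimension bounds \cite{Be96}. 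A second difficulty is the descent from $k_r$ back to $k$ in the genuinely non-split unramified case: there $\nu_\ga$ need not be integral, and it is precisely the Galois descent that replaces it by the integral coweight $\mu$ and puts $\mu$ in the same coroot-lattice coset as $\la$, so that $\mu$ is a weight of $V_\la$. An alternative route specific to the unramified case is by point-counting: the orbital integral $O_\ga(\mathbf 1_{G(\cO)\varpi^\la G(\cO)})$ equals, up to normalization, the number of $\bF_q$-points of a fundamental domain for $P_\ga$ on $X_\ga^\la$, and the Satake isomorphism together with the known evaluation of unramified orbital integrals expresses it as a sum of Kostka--Foulkes-type polynomials whose leading term in $q=\#k$ is $m_{\la\mu}$; equidimensionality of $X_\ga^\la$ then identifies this leading term with the number of $G_\ga(F)$-orbits of irreducible components.
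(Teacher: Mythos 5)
Your strategy is the one the paper itself uses in the unramified case: stratify $X_\ga^\la$ by semi-infinite cells, use the $T(F)$-action to reduce the orbit count to the irreducible components of a single cell $Y_\ga^\la=X_\ga^\la\cap S_0$, and compare that cell with the MV cycle $S_\mu\cap\mathrm{Gr}_\la$ via the twisted conjugation map $f_\ga(u)=u^{-1}\ga u\ga^{-1}$ (your $m_\eta$ is just $f_\ga$ conjugated by $\varpi^\eta$). Two of your worries are moot: $k$ is algebraically closed throughout, so ``unramified'' already means split, $\nu_\ga=\mu$ is integral, and there is no Galois descent step; and you should not import equidimensionality of $X_\ga^\la$ from \citep{BC17} as an input --- in the unramified case it is an output of the cell-by-cell analysis, and \citep{BC17} builds on this case, so using it here would be circular.

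The genuine gap is the step you explicitly defer: making the comparison between $Y_\ga^\la$ and $S_\mu\cap\mathrm{Gr}_\la$ precise is the entire technical content of the proof, and neither of your proposed workarounds does it. Taking $\eta$ very dominant does not remove the mismatch between the two ind-pro quotients, and dimension bounds alone would at best bound the number of top-dimensional components, not produce the bijection of component sets that the statement requires (nor rule out lower-dimensional components of the cell whose closures are components of $X_\ga^\la$). The paper closes this by proving: (i) $U_n\subset f_\ga(L^+U)$ once $n\ge\mathrm{val}(\alpha(\ga)-1)$ for all $\alpha\in\Phi^+$ (Lemma~\ref{image-L^+U-lem}); (ii) $f_\ga^{-1}(U_n)\subset U_{n-r(\ga)}$ (Lemma~\ref{inverse-image-U_n-lem}), which yields admissibility of $f_\ga^{-1}(Z)$; and (iii) for $n\gg0$ the induced map $f_\ga^{-1}(Z)/U_n\to Z/U_n$ of finite-dimensional quotients is smooth surjective with irreducible fibers of dimension $r(\ga)$ (Lemma~\ref{f_0-smooth-lem}, Proposition~\ref{admissible-set-prop}). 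Applied to $Z=K\varpi^\la K\varpi^{-\mu}\cap LU$, this gives an honest bijection $\mathrm{Irr}(Y_\ga^\la)\cong\mathrm{Irr}(S_\mu\cap\mathrm{Gr}_\la)$ together with equidimensionality and the dimension formula, after which geometric Satake gives $m_{\la\mu}$. Your point-counting alternative also does not work as stated: $k$ is algebraically closed here, and even over a finite field the leading term of the orbital integral counts Frobenius-stable top-dimensional components, so extracting the orbit count from it would presuppose the geometric statements you are trying to prove.
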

We remark that there is a similar conjecture made by Miaofen Chen and Xinwen Zhu on the irreducible components of affine Deligne-Lusztig varieties, see \citep{HaVi17} and \citep{XiaoZhu17} for statements. Besides the similarities, there is a subtle difference between our formulation and the one of Chen-Zhu, see
Remark~\ref{best-integral-approx-rem}.\par 
In fact we will also give a better formulation of this Conjecture using the Steinberg quotient of $G$ (``space of characteristic polynomials"). See Conjecture~\ref{irr-components-conjecture} for more details.\par 
\begin{thm*}
The Conjecture is true if either $\la=0$ or $\ga\in G(F)^{\mathrm{rs}}$ is unramified (i.e. split).
\end{thm*}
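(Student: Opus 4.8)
The plan is to treat the two hypotheses separately. Write $\mathrm{Gr}:=G(F)/G(\cO)$ and $\mathrm{Gr}_\la:=G(\cO)\varpi^\la G(\cO)/G(\cO)$, so that $X_\ga^\la\subset\mathrm{Gr}$ is the locus of $gG(\cO)$ with $g^{-1}\ga g\in G(\cO)\varpi^\la G(\cO)$.

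\emph{The case $\la=0$.} Here $X_\ga^0$ is cut out by the condition of landing in the single $G(\cO)$-orbit $\mathrm{Gr}_0$, which is a point, so the target Schubert variety $\overline{\mathrm{Gr}_0}$ has no lower strata and the ``irregular component'' phenomenon does not occur; one expects the Kazhdan--Lusztig picture to persist, namely that $X_\ga^{0,\mathrm{reg}}$ is dense and that the irreducible components of $X_\ga^0$ are in bijection with $\pi_0(P_\ga)$, on which $G_\ga(F)$ acts by translation through the surjection $G_\ga(F)\epic P_\ga$. Hence there is a single $G_\ga(F)$-orbit on components. On the other hand $X_\ga^0\neq\emptyset$ forces $\nu_\ga\le_\bQ 0$ by \ref{nonempty-prop}, hence $\nu_\ga=0$ (as $\nu_\ga$ is itself a non-negative combination of positive coroots), so $\mu=0$ and $m_{\la\mu}=\dim V_0(0)=1$. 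This settles the first case.

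\emph{The case of split $\ga$.} After conjugating we may assume $G_\ga=T$ is the standard split maximal torus and $\ga\in T(F)$, so $G_\ga(F)=T(F)$ acts on $X_\ga^\la$ by left translation. Fix a Borel $B=TN$ and the Iwasawa stratification $\mathrm{Gr}=\bigsqcup_{\mu\in X_*(T)}S_\mu$, $S_\mu=N(F)\varpi^\mu G(\cO)/G(\cO)$. Two observations reduce the count to a single stratum: (i) for $\kappa\in X_*(T)$, left translation by $\varpi^\kappa$ carries $S_\mu$ isomorphically onto $S_{\mu+\kappa}$ and preserves $X_\ga^\la$ (since $T$ is abelian), so $X_*(T)\subset T(F)$ permutes the strata simply transitively and, $T(\cO)$ being connected, $\{G_\ga(F)\text{-orbits on irreducible components of }X_\ga^\la\}$ is identified with the set of irreducible components of $X_\ga^\la\cap S_{\mu_0}$ for any fixed $\mu_0$; (ii) for $L=n\varpi^\mu G(\cO)\in S_\mu$, using that $\ga$ normalizes $N$ and commutes with $\varpi^\mu$, the position of $\ga L$ relative to $L$ equals the position relative to the base point of
\[
\psi(L)\ :=\ \varpi^{-\mu}\bigl(n^{-1}\,\ga n\,\ga^{-1}\bigr)\varpi^{\mu}\cdot\varpi^{\nu_\ga}G(\cO)\ \in\ S_{\nu_\ga},
\]
where we choose the representative of $\ga$ so that $\nu_\ga=\mathrm{val}(\ga)$ is dominant; thus $X_\ga^\la=\psi^{-1}(\mathrm{Gr}_\la\cap S_{\nu_\ga})$.

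The map $n\mapsto n^{-1}\ga n\ga^{-1}$ is an automorphism of $N(F)$, its differential at the identity being $\mathrm{ad}_\ga-\mathrm{Id}$ on $\mathrm{Lie}\,N$, which is invertible because $\ga$ is regular. Granting this, one checks that on $S_{\mu_0}$ the map $\psi$ presents $X_\ga^\la\cap S_{\mu_0}$ as a fibration over $\mathrm{Gr}_\la\cap S_{\nu_\ga}$ with affine-space fibers; the discriminant $d(\ga)$ enters only through the fibre dimension (via the cokernel of $n\mapsto n^{-1}\ga n\ga^{-1}$ on the congruence subgroups $K_\mu:=N(F)\cap\varpi^\mu G(\cO)\varpi^{-\mu}$), which is invisible to $\pi_0$. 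Therefore the irreducible components of $X_\ga^\la\cap S_{\mu_0}$ biject with those of $\mathrm{Gr}_\la\cap S_{\nu_\ga}$, hence --- the former being open and dense in the latter --- with those of $\overline{\mathrm{Gr}_\la}\cap S_{\nu_\ga}$, which by the geometric Satake equivalence are exactly the Mirkovi\'c--Vilonen cycles of weight $\nu_\ga$ in $\overline{\mathrm{Gr}_\la}$; there are $\dim V_\la(\nu_\ga)$ of them. Finally, when $X_\ga^\la\neq\emptyset$ the dominant integral coweight $\nu_\ga$ is a weight of $V_\la$ --- this is precisely $\mathrm{Gr}_\la\cap S_{\nu_\ga}\neq\emptyset$ --- so the smallest dominant integral $\mu$ with $\nu_\ga\le_\bQ\mu$ and $\mu\le\la$ is $\nu_\ga$ itself, and $m_{\la\mu}=\dim V_\la(\nu_\ga)$, as wanted.

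The main obstacle is the fibration statement just used: one must prove that, on the relevant finite-dimensional piece, $\psi$ is a Zariski-locally-trivial affine bundle over $\mathrm{Gr}_\la\cap S_{\nu_\ga}$ --- in particular surjective onto it --- which requires a careful analysis of the coset bookkeeping relating $N(F)/K_{\mu_0}$, the image of the Lang-type map $n\mapsto n^{-1}\ga n\ga^{-1}$ on the groups $K_\mu$ (injective, but not surjective, with cokernel governed by $d(\ga)$), and the semi-infinite orbit $S_{\nu_\ga}$. This is also where one checks that the identification is compatible with the $X_*(T)$-translations, so that the answer is exactly $\dim V_\la(\nu_\ga)$ and not a multiple of it.
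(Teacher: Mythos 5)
Your argument is essentially the paper's own: the $\la=0$ case is handled by adapting Kazhdan--Lusztig (the paper cites \citep[\S4.2]{Bou15}), and for split $\ga$ the paper likewise reduces via the $\La$-translations to the single Iwasawa stratum $Y_\ga^\la=X_\ga^\la\cap S_0$ and fibers it over the MV cycle $S_{\nu_\ga}\cap\mathrm{Gr}_\la$ by the same Lang-type map $u\mapsto u^{-1}\ga u\ga^{-1}$ (Theorem~\ref{Y-gamma-thm} and Corollary~\ref{dim-unr-cor}). The fibration statement you flag as the main obstacle is precisely what the paper supplies in Lemmas~\ref{f_0-smooth-lem}--\ref{inverse-image-U_n-lem} and Proposition~\ref{admissible-set-prop}, via a root-group filtration of $U$ showing the map is smooth surjective with irreducible $r(\ga)$-dimensional fibers on admissible subsets.
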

\begin{proof}
If $\la=0$, then one can adapt the argument of Kazhdan-Lusztig to show that there is only one $G_\ga(F)$-orbit on $\mathrm{Irr}(X_\ga^{\la=0})$. See \citep[\S4.2]{Bou15}. The case where $\ga$ is unramifed is proved in Corollary~\ref{dim-unr-cor}.
\end{proof}
\begin{rem}
Although we restrict to equal characteristic local field, the results in this paper also generalize to mixed characteristic generalized affine Springer fibers, which could be defined after the work of Xinwen Zhu \citep{Zhu17}. However, the dimension formula in full generality still remains open in mixed characteristic case since in \citep{BC17}, we used the group version of Hitchin fibration which does not have a mixed characteristic analogue yet.
\end{rem}
\subsection{Organization of the article}
In \S\ref{monoid-section}, we review certain facts needed from the theory of reductive monoids. We follow the exposition of \cite{Bou15} with some modifications. In \S\ref{fiber-section}, we define the ind-scheme structure on $X_\ga^\la$ and provide criterions for its nonemptiness. Also we study the natural symmetries on $X_\ga^\la$. In \S\ref{unr-section} we prove dimension formula and the conjecture on irreducible components in the unramified case. In \S\ref{general-case-section}, we prove that $X_\ga^\la$ is a finite dimensional scheme locally of finite type and formulate precise conjectures on its irreducible components.

\subsection{Notations and conventions}\label{notation-subsection}
\subsubsection{Group theoretic notations}
Assume throughout the paper that $k$ is an algebraically closed field. $F=k((\varpi))$ and $\cO=k[[\varpi]]$. We let $G$ be a (split) connected reductive group over $k$. Assume that either $\mathrm{char}(k)=0$ or $\mathrm{char}(k)>0$ does not divide the order of Weyl group of $G$.\par 
 Let $G_0$ be the derived group of $G$, a semisimple group of rank $r$. \textbf{We assume that $G_0$ is simply connected throughout the paper.} Let $Z$ be the connected center of $G$ and $Z_0$ the center of $G_0$. In particular, $Z_0$ is a finite abelian group.\par 
Fix a maximal torus $T$ of $G$ and a Borel subgroup $B$ containing $G$. Let $\Delta=\{\alpha_1,\dotsc,\alpha_r\}$ be the set of simple roots determined by $B$. Let $\check{\Lambda}:=X^*(T)$ (resp. $\Lambda:=X_*(T)$) be the weight (resp. coweight) lattice. Let $\check{\Lambda}^+$ (resp. $\Lambda^+$) be the set of dominant weights (resp. dominant coweights). Also let $T_0=T\cap G_0$, $B_0=B\cap G_0$ and $Z_0$ be the center of $G_0$. Moreover, $\Delta$ is also viewed as a set of simple roots for $G_0$.\par 
We have the canonical abelian quotient
\[\det:G\to G_{ab}:=G/G_0=T/T_0=Z/Z_0\]
Let $W$ be the Weyl group of $G$ and $S\subset W$ the set of simple reflections associated to the simple roots $\Delta$. There is a unique longest element $w_0$ of $W$ under the Bruhat order determined by $S$. Then $w_0$ is a reflection and $-w_0$ defines a bijection on the sets $\Delta$, $\Lambda^+$ and $\check\Lambda^+$.\par 
Let $\hat G$ be the Langlands dual group of $G$, viewed as a complex reductive group. For each $\lambda\in\Lambda^+$, viewed as a dominant weight for $\hat G$, let $V(\lambda)$ be the irreducible representation of $\hat G$ with highest weight $\la$. For any $\mu\in\La^+$ with $\mu\le\la$, let $m_{\la\mu}$ be the dimension of $\mu$ weight space in $V(\la)$.\par 
\subsubsection{Scheme theoretic notations}
For any scheme $X$ over $\cO$, we let $L_n^+X$ be its $n$-th jet space. In other words, $L_n^+X$ is the $k$-scheme such that for any $k$ algebra $R$, $L_n^+X(R)$ is the set of morphisms $\spec R[\varpi]/\varpi^n\to X$ whose composition with the structure morphism $X\to\spec\cO$ corresponds to the canonical $k$-algebra homomorphism $\cO=k[[\varpi]]\to R[\varpi]/\varpi^n$.\par 
Let $L^+X:=\varprojlim L^+_nX$ be the arc space and $LX$ its loop space. More precisely, $LX$ is the $k$-functor such that for any $k$-algebra $R$, $LX(R)$ is the set of morphisms $\spec R((\varpi))\to X$ whose composition with the structure morphism $X\to\spec\cO$ corresponds to the canonical $k$-algebra homomorphism $\cO=k[[\varpi]]\to R((\varpi))$.\par
If $X$ is a $k$-scheme, we define $L_n^+X$, $L^+X$, $LX$ using the $\cO$-scheme $X\otimes_k\cO$. \par 
For any scheme $X$, we let $\mathrm{Irr}(X)$ be the set of its irreducible components.

\subsection*{Acknowledgement}
I am deeply grateful to my doctoral advisor Ng\^o Bao Ch\^au for his encouragement and interst in this paper, for sharing many insights and teaching me many things over the years. I thank Alexis Bouthier, Cheng-Chiang Tsai, Zhiwei Yun and Xinwen Zhu for their interest as well as helpful discussions and suggestions. I thank George Lusztig for bringing my attention to \citep{Lu15}. I thank Jim Humphreys, David Speyer, Hugh Thomas for answering my question on MathOverflow related to Corollary~\ref{lower-bound-multiplicity-cor}. Also, I thank Jim Humphreys for valuable feedbacks on the first draft of this paper and helping me clarify the notion of Coxeter elements.

\section{Recollection on reductive monoids}\label{monoid-section}
We keep the notations and assumptions in \S\ref{notation-subsection}. In particular, $G$ is a connected (split) reductive group over $k$ whose derived group $G_0$ is simply connected.\par 
\subsection{Construction of Vinberg monoid}
Let $\omega^0_1,\dotsc,\omega^0_r\in X^*(T_0)_+$ be the fundamental weights dual to the simple coroots. For each $1\le i\le r$, let $\rho_i:G_0\to\mathrm{GL}(V_{\omega^0_i})$ be the irreducible representation with highest weight $\omega^0_i$.
\subsubsection{}
We introduce the enhanced group $G_0^+:=(T_0\times G_0)/Z_0$ where $Z_0$, the center of $G_0$, embeds anti-diagonally in $T_0\times G_0$. Then $G_0^+$ is a reductive group with center $Z_0^+=(T_0\times Z_0)/Z_0\cong T_0$ and derived group $G_0$.  Let $T_0^+=(T_0\times T_0)/Z_0$ be a maximal torus of $G_0^+$ and $B_0^+=(T_0\times B_0)/Z_0$ a Borel subgroup containing $T_0^+$. The representation $\rho_i$ can be extended to a representation of $G_0^+$: 
\[\xymatrix@R=1pt{
\rho_i^+:G_0^+\ar[r] & \mathrm{GL}(V_{\omega^0_i})\\
(t,g)\ar@{|->}[r] & \omega^0_i(t)\rho_i(g)
}\]
For each $1\le i\le r$, we also extend the simple roots $\alpha_i$ to $\alpha_i^+:G_0^+\to\bG_m$ by $\alpha_i^+(t,g)=\alpha_i(t)$. Altogether, we get the following homomorphism
\[(\alpha^+,\rho^+):G_0^+\to\bG_m^r\times\prod_{i=1}^r\mathrm{GL}(V_{\omega^0_i}) \]
which is easily seen to be a closed embedding. 
\begin{defn}
The \emph{Vinberg monoid} of $G_0$, denoted by $V_{G_0}$, is the normalization of the closure of $G_0^+$ in the product 
\[\bA^r\times\prod_{i=1}^r\mathrm{End}(V_{\omega^0_i})\]
\end{defn}
Then $V_{G_0}$ is an algebraic monoid with unit group $G_0^+$. It has a smooth dense open subvariety $V_{G_0}^0$ defined as the normalization of the closure of $G_0^+$ in the product
\[\bA^r\times\prod_{i=1}^r(\mathrm{End}(V_{\omega^0_i})-\{0\})\]
The action of $G_0^+\times G_0^+$ on $G_0^+$ by left and right multiplication extends to an action on $V_{G_0}$. 
\begin{defn}
The \emph{abelianization} of $V_{G_0}$ is the GIT quotient
\[A_{G_0}:=V_{G_0}//(G_0\times G_0)\]
\end{defn}
Let $\alpha:V_{G_0}\to A_{G_0}$ be the quotient map. Then $\alpha$ is smooth and there exists a canonical section of $\alpha$:
\begin{equation}\label{section-alpha-eq}
s: A_{G_0}\to V_{G_0}
\end{equation}
whose image is the closure of the diagonal torus
\[T_0^\Delta:=\{(t,t^{-1})\in T_0\times T_0\}/Z_0\]
in $V_{G_0}$. In fact, $\alpha$ and $s$ induces canonical isomorphisms between $T_0^\Delta$ and $T_0^{\mathrm{ad}}$.
\begin{defn}
The \emph{asymptotic semigroup} of $G_0$ is defined by $\mathrm{As}(G_0):=\alpha^{-1}(0)$.
\end{defn}
\subsubsection{Relating $G$ and $G_0^+$}
It is proved by Vinberg in characteristic 0, and extended to positive characteristic by Rittatore, that any reductive monoid $M$ whose unit group has derived group isomorphic to $G_0$ is pulled back from $V_{G_0}$ by a homomorphism 
\[A_M\to A_{G_0}\] where $A_M$ is the abelianization of $M$, see \cite[Theorem 9]{Rit01}. Applying this result to the case where $M=G$ and $A=G_{\mathrm{ab}}$, we obtain a map $G_{\mathrm{ab}}\to A_{G_0}$ which factors through the unit group $T_0^{\mathrm{ad}}$ of $A_{G_0}$. Thus we obtain the following Catesian diagram
\begin{equation}\label{G-ab-Cartesian-square-eq}
\xymatrix{
G\ar[r]^{\theta}\ar[d]_\det & G_0^+\ar[d]^{\alpha|_{G_0^+}}\\
G_{\mathrm{ab}}\ar[r]^{\theta_{\mathrm{ab}}} & T_0^{\mathrm{ad}}
}\end{equation}

Recall that $G=(Z\times G_0)/Z\cap G_0$. The map $\theta$ induces the identity map on $G_0$ and a canonical homomorphism $\theta_Z:Z\to T_0$ which restricts to identity on the finite group $Z\cap G_0$. Note that in general the map $\theta_{\mathrm{ab}}$ (and hence $\theta$) is not unique. We fix one such map from now on.\par 
The map $\theta$ induces group homomorphisms
\[\theta_*: \La:=X_*(T)\to X_*(T_0^+),\quad\quad\theta^*: X^*(T_0^+)\to \check{\La}:=X^*(T)\]
\subsubsection{}\label{omega-i-section}
The natural quotient homomorphism $T_0\to T_0^{\mathrm{ad}}$ allows us to view $X_*(T_0)$ as a finite index subgroup of $X_*(T_0^{\mathrm{ad}})$. Then we have a concrete description of the coweight lattice of $T_0^+=(T_0\times T_0)/Z_0$ as follows:
\begin{equation}\label{T-plus-coweight-lattice-eq}
X_*(T_0^+)=\{(\nu_1,\nu_2)\in X_*(T_0^{\mathrm{ad}})\times X_*(T_0^{\mathrm{ad}}) | \nu_1+\nu_2\in X_*(T_0)\}.
\end{equation}
Using this description, for each $1\le i\le r$ we define $\omega_i^+:=(\omega_i^0,\omega_i^0)\in X_*(T_0^+)$ and $\omega_i:=\theta^*(\omega_i^+)$. Then $\omega^+_i$ is the highest weight of the $G_0^+$-representation $\rho_i^+$ and hence $\omega_i$ is the highest weight of the $G$-representation $\rho_i^+\circ\theta$.

\subsection{The adjoint quotient}
\subsubsection{}
Recall that the \emph{Steinberg base} is
the GIT quotient
\[\fC_G:=G//\mathrm{Ad}(G)=\spec k[G]^{\mathrm{Ad}(G)}\]
where $\mathrm{Ad}(G)$ denotes the conjugation (or adjoint) action of $G$ on itself. Also we have the Steinberg base $\fC_{G_0}:=G_0//\mathrm{Ad}(G_0)$ for the derived group $G_0$. Denote by $\chi_G: G\to\fC$ and $\chi_{G_0}:G_0\to\fC_{G_0}$ the canonical quotient maps.\par 
Under the assumption that $G_0$ is simply connected, Steinberg shows in \cite{St65} that the conjugation invariant functions $\mathrm{Tr}\rho_i$ define an isomorphism $\fC_{G_0}\xrightarrow{\sim}\bA^r$.\par 
We can extend Steinberg's result to $G$ as follows. For each $1\le i\le r$, the composition $\rho_i^+\circ\theta$, where $\theta$ is as in \eqref{G-ab-Cartesian-square-eq}, defines an irreducible representation of $G$ extending the fundamental representation $V_{\omega^0_i}$ of $G_0$. Its highest weight is $\omega_i:=\theta^*(\omega_i^+)\in\check{\La}$, cf. \S\ref{omega-i-section}. \par 
The maps $\mathrm{Tr}(\rho_i^+\circ\theta)$ and $\det$ define an isomorphism
\begin{equation}\label{Steinberg-base-eq}
\fC_G\xrightarrow{\sim}\bA^r\times G_{\mathrm{ab}}
\end{equation}
which restricts to Steinberg's isomorphism $\fC_{G_0}\simeq\bA^r$.\par 
Moreover, there is a canonical isomorphism $k[G]^{\mathrm{Ad}(G)}\cong k[T]^W$ defined by restriction of functions, hence an isomorphism $\fC_G\cong T//W$.

\subsubsection{}
The \emph{extended Steinberg base for $G_0$}  is defined to be the GIT quotient 
\[\fC_+:=V_{G_0}//G_0\]
where $G_0$ acts by conjugation. Let $\chi_+: V_{G_0}\to\fC_+$ be the canonical map. The functions $\alpha_+$ and $\mathrm{Tr}(\rho_i^+)$ are regular functions on $\fC_+$ and under our assumption that $G_0$ is simply-connected they define an isomorphism
\[\fC_+\cong A_{G_0}\times\bA^r\cong\bA^{2r}\]
Moreover, we have $k[V_{G_0}]^{G_0}\cong k[V_{T_0}]^W$ and hence
\[\fC_+\cong V_{T_0}//W\]

\subsubsection{Discriminant}
On $T$ (and hence $T_0$) we have the \emph{discriminant function}
\[\fD(t):=\prod_{\alpha\in\Phi}(1-\alpha(t))\]
which is clearly $W$-invariant and descends to a regular function on the Steinberg base $\fC=T//W$ (and hence its subspace $\fC_{G_0}=T_0//W$).\par 

The discriminant function $\fD$ on $T_0$ extends to a function $\fD_+$ on $T_0^+=(T_0\times T_0)/Z_0$ in the following way: 
\[\fD_+(t_1,t_2):=2\rho(t_1)\fD(t_2).\]
The extended discriminant $\fD_+$ is $W$-invariant and extends further to a regular function on $V_{T_0}$. Hence we view $\fD_+$ as an element in $k[V_{T_0}]^W$, i.e. a regular function on $\fC_+=V_{T_0}//W$.\par 
The zero locus of $\fD_+$ is a principal divisor on $\fC_+$, which we call the \emph{discriminant divisor}. The \emph{regular semisimple open subset} $\fC_+^{\mathrm{rs}}$ is the complement of the discriminant divisor in $\fC_+$. Similarly, on $\fC$ we have the regular semisimple locus $\fC^{\mathrm{rs}}$ defined as the complement of the principal divisor $\fD$. We denote $G^{\mathrm{rs}}:=\chi^{-1}(\fC^{\mathrm{rs}})$.
\begin{defn}\label{d-ga-defn}
For any $\ga\in G(F)^{\mathrm{rs}}$, we define its \emph{discriminant valuation} to be 
\[d(\ga):=\mathrm{val}(\fD(\chi(\ga))\in\bZ.\]
\end{defn}
Clearly $d(\ga)$ is (stable) conjugation invariant. To compute it, after conjugating by $G(\bar F)$ we may assume that $\ga\in T(\bar F)$, then we see that
\begin{equation}\label{d-ga-eq}
\begin{split}
d(\ga)&=\sum_{\alpha\in\Phi}\mathrm{val}(1-\alpha(\ga))\\
&=\mathrm{val}(\det(\mathrm{Id}-\mathrm{ad}_\ga:\fg(F)/\fg_\ga(F)\to\fg(F)/\fg_\ga(F))).
\end{split}
\end{equation}

\subsection{Steinberg sections and regular centralizers}
\subsubsection{Centralizer}
The homomorphism $\theta:G\to G_0^+$ in \eqref{G-ab-Cartesian-square-eq} induces an action of $G\times G$ on $G_0^+$ by left and right multiplication. Moreover, this action extends to $V_{G_0}$.
Define the centralizer group scheme $\cI$ over $V_{G_0}$ by
\[\cI=\{(g,\gamma)\in G\times V_{G_0} | g\gamma g^{-1}=\gamma\}\]
Let $V_{G_0}^{\mathrm{reg}}\subset V_{G_0}^0$ be the open subscheme consisting of elements $\ga\in V_{G_0}^0$ such that $\dim G_\ga=\mathrm{rank}(G)=r+\mathrm{rank}(G_{\mathrm{ab}})$.
Then $\cI|_{V_{G_0}^{\rm{reg}}}$ is smooth of relative dimension equal to $\mathrm{rank}(G)$ when restricted to $V_{G_0}^{\rm{reg}}$.\par 
Similarly, we can define a group scheme $\cI_0$ by considering the $G_0$ action on $V_{G_0}$ and we have
\[\cI=(\cI_0\times Z)/Z_0,\quad \cI_0=\cI\cap G_0.\]

\subsubsection{Coxeter elements}
Let $S=\{s_1,\dotsc,s_r\}$ be the set of simple reflections in $W$ corresponding to our choice of simple roots $\Delta$. Let $l:W\to\bN$ be the length function determined by $S$. For each $w\in W$, let $\mathrm{Supp}(w)\subset S$ be the subset consisting of those simple reflections which occurs in one (and hence every) reduced word expression of $w$. 
\begin{defn}\label{coxeter-definition}
An element $w\in W$ is called an \emph{$S$-Coxeter element} if it can be written as products of simple reflections in $S$, each occuring precisely once. In particular, $l(w)=r$ and $\mathrm{Supp}(w)=S$. Denote by $\mathrm{Cox}(W,S)$ the set of $S$-Coxeter elements in $W$. 
\end{defn}
In general, an element $w\in W$ is called a \emph{Coxeter element} if it is conjugate to an $S$-Coxeter element in $W$. 
\subsubsection{Steinberg section for $G_0$}
Recall that $G_0$ is simply connected. For each $S$-Coxeter element $w\in\mathrm{Cox}(W,S)$, and each choice of representatives $\dot{s}_i\in N_G(T)$ of the simple roots $s_i$, Steinberg defines a section $\epsilon^w_{G_0}:\fC_{G_0}\to G_0$ of the map $\chi_{G_0}:G_0\to\fC_{G_0}$. Moreover, it is shown that the equivalence class of $\epsilon_{G_0}^w$ depends neither on $w$ nor the choices $\dot{s}_i$, see \cite[7.5 and 7.8]{St65}. Here we say that two sections $\epsilon,\epsilon'$ are \emph{equivalent} if 
for all $a\in\fC_{G_0}$, $\epsilon(a)$ and $\epsilon'(a)$ are conjugate under $G_0$. 
\subsubsection{}
From the Cartesian diagram \eqref{G-ab-Cartesian-square-eq} we get a Cartesian diagram:
\[\xymatrix{
G\ar[d]_{\chi_G}\ar[r]^{\theta} & G_0^+\ar[d]^{\chi_{G_0^+}}\\
 \fC_G\cong \bA^r\times G_{\mathrm{ab}}\ar[r] & \bA^r\times T_0^{\mathrm{ad}}\cong\fC_{G_0^+}
}\]
where the bottom arrow is $\mathrm{Id}_{\bA^r}\times\theta_{\mathrm{ab}}$. Thus any section $\epsilon_+$ of $\chi_{G_0^+}$ will induce a section of $\chi_G$. If moreover, the image of $\epsilon_+$ is contained in $G_0^{+,\mathrm{reg}}$, then the image of $\epsilon$ is contained in $G^{\mathrm{reg}}$. In fact, we can even define a section for $\chi_+: V_{G_0}\to\fC_+$. 
\begin{prop}\label{extend-steinberg-section-prop}
For each Coxeter element $w\in\mathrm{Cox}(W,S)$ and choices of representatives $\dot{s_i}$ of the simple reflections $s_i$,
there exists a section $\epsilon_+^w:\fC_+\to V_{G_0}$ of the map $\chi_+: V_{G_0}\to \fC_+$. Moreover, the image of $\epsilon_+^w$ is contained in $V_{G_0}^{\mathrm{reg}}$.
\end{prop}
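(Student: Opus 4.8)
The plan is to build the section $\epsilon_+^w$ by ``glueing'' Steinberg's section for $G_0$ with the section $s$ of the abelianization map, using the fact that $\fC_+ \cong A_{G_0}\times \bA^r$ and that $V_{G_0}$ is recovered from $G_0^+$ together with the degeneration recorded by $\alpha$. More precisely, I would first use Steinberg's section $\epsilon_{G_0}^w : \fC_{G_0}\xrightarrow{\sim}\bA^r \to G_0$ attached to the Coxeter element $w$ and the representatives $\dot s_i$. Writing a point of $G_0$ in the Bruhat-cell form $\prod_i x_{\alpha_i}(\ast)\,\dot s_i$, one sees that Steinberg's section takes values in the big cell associated to $w$, and every such element is regular; this is where I invoke \cite[7.5, 7.8]{St65}. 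The first step is therefore to recall this construction in enough detail to see explicitly how the $r$ coordinates on $\fC_{G_0}$ (the traces $\mathrm{Tr}\,\rho_i$) parametrize the section.

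Next I would extend this to a section of $\chi_{G_0^+}:G_0^+\to \fC_{G_0^+}\cong \bA^r\times T_0^{\mathrm{ad}}$. Since $G_0^+ = (T_0\times G_0)/Z_0$, a point of $G_0^+$ over $(c,\bar t)\in \bA^r\times T_0^{\mathrm{ad}}$ can be written as the class of $(t,\gamma_0)$ with $\gamma_0\in G_0$; the conjugation action factors through $G_0$, so regularity and the characteristic polynomial only see $\gamma_0$ up to the central ambiguity. Concretely I would set $\epsilon_+^w$ over $(c,\bar t)$ to be the image in $G_0^+$ of $(\tilde t,\epsilon_{G_0}^w(c))$, where $\tilde t\in T_0$ is a chosen lift of $\bar t$; one must check this is well-defined independently of the lift — the discrepancy lies in $Z_0$, which is killed in the quotient $G_0^+$, so it is. The element so produced is regular in $G_0^+$ because its image under $\alpha|_{G_0^+}$ is a unit and its conjugation-centralizer has dimension $r+\mathrm{rank}(G_{\mathrm ab})=\mathrm{rank}(G)$, using that $\epsilon_{G_0}^w(c)$ is regular in $G_0$.

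The remaining step is to extend over the degeneration, i.e. from $G_0^+$ to all of $V_{G_0}$, so that the section is defined over the whole of $\fC_+\cong A_{G_0}\times\bA^r$ and not merely over the open locus where $\alpha_+$ is a unit. Here I would exploit the $\bG_m^r$-action on $V_{G_0}$ coming from the $\bA^r=A_{G_0}$ factor together with the canonical section $s:A_{G_0}\to V_{G_0}$ of \eqref{section-alpha-eq}, whose image is the closure $\overline{T_0^\Delta}$. The natural guess is $\epsilon_+^w(a,c) = s(a)\cdot \epsilon^{w}(c)$ for a suitably normalized section $\epsilon^w$ valued in $G_0^+$, interpreted via the monoid multiplication $V_{G_0}\times G_0^+\to V_{G_0}$; one checks $\chi_+\circ\epsilon_+^w=\mathrm{id}$ because $\chi_+$ is compatible with this multiplication and with $\alpha$, and because $\overline{T_0^\Delta}$ maps isomorphically onto the $A_{G_0}$-direction. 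Regularity at the boundary $\alpha=0$ — i.e. that $\epsilon_+^w(a,c)\in V_{G_0}^{\mathrm{reg}}$ even on $\mathrm{As}(G_0)$ — is the delicate point: one has to show the centralizer dimension does not jump, which I would do by a dimension/semicontinuity argument, using that $V_{G_0}^{\mathrm{reg}}$ is open and that the centralizer group scheme $\cI$ is flat (smooth of relative dimension $\mathrm{rank}(G)$) over $V_{G_0}^{\mathrm{reg}}$; alternatively one can degenerate the explicit Bruhat-cell formula and verify directly that the limiting element still has an $r$-dimensional (plus central) centralizer. I expect this last verification — that the Steinberg-type section stays inside the regular locus after the Vinberg degeneration — to be the main obstacle, and the place where the simply-connectedness of $G_0$ and the Coxeter condition on $w$ are genuinely used.
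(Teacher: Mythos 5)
The paper itself does not prove this Proposition: its ``proof'' is a citation of \cite{Bou15} (Proposition 1.10 for the existence of $\epsilon_+^w$, Proposition 1.16 and Lemme 1.17 for regularity), so your reconstruction must be measured against Bouthier's construction. Your outline has the right shape, but both steps that carry the actual content have gaps. For the extension over the boundary, the verification ``$\chi_+\circ\epsilon_+^w=\mathrm{id}$ because $\chi_+$ is compatible with this multiplication'' is not available: the coordinates $\mathrm{Tr}\,\rho_i^+$ on $\fC_+$ are not multiplicative, and multiplying by $s(a)$ rescales the weight-$\mu$ matrix entry of $\rho_i^+$ by the monomial $\prod_j a_j^{n_j(\mu)}$, where $\omega_i^0-\mu=\sum_j n_j(\mu)\alpha_j$. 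Whether the resulting invariants still define an automorphism of $\fC_+\cong\bA^{2r}$ depends delicately on the ordering of the factors. Already for $G_0=\mathrm{SL}_2$, where $V_{G_0}=M_2$ and $s(a)=\mathrm{diag}(1,a)$: taking $\epsilon(b)=\dot s\,x_\alpha(b)=\left(\begin{smallmatrix}0&-1\\1&b\end{smallmatrix}\right)$ gives $s(a)\epsilon(b)=\left(\begin{smallmatrix}0&-1\\a&ab\end{smallmatrix}\right)$ with invariants $(a,ab)$, which collapses over $a=0$ and cannot be repaired by reparametrizing $b$; whereas $x_\alpha(b)\,\dot s\cdot s(a)=\left(\begin{smallmatrix}b&-a\\1&0\end{smallmatrix}\right)$ does work. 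The correct section is an explicit interleaved product of Bruhat factors and boundary cocharacters generalizing the companion matrix, and the identity $\chi_+\circ\epsilon_+^w=\mathrm{id}$ is a computation, not a formal compatibility; your proposal does not perform it.

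The more serious gap is regularity at the boundary. A ``dimension/semicontinuity argument, using that $V_{G_0}^{\mathrm{reg}}$ is open and that $\cI$ is flat over it'' runs in the wrong direction: the fiber dimension of $\cI$ is \emph{upper} semicontinuous, so the limit of regular elements may perfectly well fail to be regular --- and for a general degeneration into $\mathrm{As}(G_0)$ it does fail. Openness of $V_{G_0}^{\mathrm{reg}}$ gives no information about whether the specific boundary point $\epsilon_+^w(a,c)$ with some $a_j=0$ lies in it. Establishing $\epsilon_+^w(0)\in\cN^{\mathrm{reg}}$ is precisely the content of \cite[Lemme 1.17]{Bou15}, and it requires identifying the boundary value explicitly and invoking He's and Lusztig's description of the $G$-orbits in the asymptotic semigroup; this is also where the Coxeter condition on $w$ genuinely enters, and why distinct $w$ give non-conjugate points $\epsilon_+^w(0)$ (Proposition~\ref{nilp-cone-prop}). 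You correctly flag this as the main obstacle, but the proposal neither carries out the direct verification nor supplies a method that could, so the Proposition is not established.
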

\begin{proof}
The first statement is \cite[Proposition 1.10 ]{Bou15}. The second statement is Proposition 1.16 in \textit{loc. cit.}
\end{proof}
For each $w\in\mathrm{Cox}(W,S)$, the equivalence class of the extended section $\epsilon_+^w$ is independant of the choice of representatives $\dot{s}_i$ of the simple reflections. However, for two \emph{different} $w,w'\in\mathrm{Cox}(W,S)$, the sections $\epsilon_+^w$ and $\epsilon_+^{w'}$ are not equivalent since, as we will see, $\epsilon_+^w(0)$ and $\epsilon_+^{w'}(0)$ are not conjugate.
\subsubsection{The nilpotent cone}
Let $\cN:=\chi_+^{-1}(0)$ be the \emph{nilpotent cone} in the Vinberg monoid $V_{G_0}$. Let $\cN^0:=\cN\cap V_{G_0}^0$ and $\cN^{\mathrm{reg}}:=\cN\cap V_{G_0}^{\mathrm{reg}}$ be the corresponding open subsets.
\begin{prop}\label{nilp-cone-prop}
For each $w\in\mathrm{Cox}(W,S)$, we have $\epsilon_+^w(0)\in\cN^{\mathrm{reg}}$ and there are bijections
\[\mathrm{Cox}(W,S)\xrightarrow{\sim}\mathrm{Irr}(\cN^{\mathrm{reg}})\xrightarrow{\sim}\mathrm{Irr}(\cN^0)\xrightarrow{\sim}\mathrm{Irr}(\cN)\]
induced by sending $w\in\mathrm{Cox}(W,S)$ to the component containing $\epsilon_+^w(0)$. Moreover, each irreducible component of $\cN^{\mathrm{reg}}$ is a single $G$-orbit (under the adjoint action) and $\cN^{\mathrm{reg}}$ is the disjoint union of open $G$-orbits in $\cN$
\end{prop}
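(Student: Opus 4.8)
The plan is to reduce the whole statement to an analysis of the regular locus $\cN^{\mathrm{reg}}$, to read off its orbit structure from the smoothness of the centralizer group scheme, and then to transport everything back to $\cN^0$ and $\cN$ by dimension/density arguments, treating the identification with $\mathrm{Cox}(W,S)$ last. First, $\epsilon_+^w(0)\in\cN^{\mathrm{reg}}$ is immediate: the image of $\epsilon_+^w$ lies in $V_{G_0}^{\mathrm{reg}}$ by Proposition~\ref{extend-steinberg-section-prop}, and $\chi_+(\epsilon_+^w(0))=0$ since $\epsilon_+^w$ is a section of $\chi_+$. Next, using the smoothness of $\cI$ over $V_{G_0}^{\mathrm{reg}}$ (of relative dimension $\mathrm{rank}(G)$), I would establish the monoid analogue of Kostant's regularity theorem, namely that $\chi_+$ is smooth on $V_{G_0}^{\mathrm{reg}}$, along the lines of the Lie algebra case; since $\dim V_{G_0}^{\mathrm{reg}}=\dim G_0+r$ and $\dim\fC_+=2r$, the fibre $\cN^{\mathrm{reg}}=\chi_+^{-1}(0)\cap V_{G_0}^{\mathrm{reg}}$ is then smooth of pure dimension $\dim G_0-r$.

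The orbit structure of $\cN^{\mathrm{reg}}$ now follows formally. Every $\ga\in\cN^{\mathrm{reg}}$ is regular, so $\dim G_\ga=\mathrm{rank}(G)$; combined with $\cI=(\cI_0\times Z)/Z_0$, the relation $\mathrm{rank}(G)=r+\dim Z$, and the fact that the conjugation action of $G$ on $V_{G_0}$ factors through $G_0^{\mathrm{ad}}$, this gives that the orbit $G\cdot\ga$, which equals $G_0\cdot\ga$, has dimension $\dim G_0-r=\dim\cN^{\mathrm{reg}}$. A full-dimensional irreducible locally closed subset of a smooth equidimensional scheme is open, and since the orbits partition $\cN^{\mathrm{reg}}$ while two nonempty open subsets of an irreducible scheme meet, each connected component of $\cN^{\mathrm{reg}}$ consists of a single $G$-orbit. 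This proves the last sentence of the proposition (these orbits are open in $\cN$ because $\cN^{\mathrm{reg}}$ is open in $\cN$) and identifies $\mathrm{Irr}(\cN^{\mathrm{reg}})$ with the set of $G$-orbits on $\cN^{\mathrm{reg}}$.

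For the bijections $\mathrm{Irr}(\cN^{\mathrm{reg}})\xrightarrow{\sim}\mathrm{Irr}(\cN^0)\xrightarrow{\sim}\mathrm{Irr}(\cN)$ obtained by passing to closures, the input I would use is that $\chi_+$ is flat, equivalently that $\cN$ is equidimensional of dimension $\dim G_0-r$ and consists of finitely many $G$-orbits — the reductive-monoid counterpart of the finiteness of nilpotent orbits in a Lie algebra — so that the regular orbits, being precisely those of maximal dimension $\dim G_0-r$, are open and their union $\cN^{\mathrm{reg}}$ is dense in $\cN$. Together with the estimate that the boundary locus $\cN\setminus V_{G_0}^0$ has dimension strictly smaller than $\dim G_0-r$ (so that every maximal-dimensional orbit lies in $V_{G_0}^0$), this yields both middle bijections. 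The finiteness/equidimensionality and the boundary dimension estimate are where the structure theory of $V_{G_0}$ enters.

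It remains to show that $w\mapsto\overline{G\cdot\epsilon_+^w(0)}$ is a bijection $\mathrm{Cox}(W,S)\xrightarrow{\sim}\mathrm{Irr}(\cN^{\mathrm{reg}})$; it is well defined by the first step. For injectivity I would make the extended Steinberg section explicit near the origin: its restriction to $A_{G_0}\times\{0\}\subset\fC_+$ is a section of $\chi_+$ over the asymptotic semigroup $\mathrm{As}(G_0)$ built from the idempotent $s(0)$ and a lift $\dot s_{i_1}\cdots\dot s_{i_r}$ of a reduced word for $w$, and from $\epsilon_+^w(0)$ one extracts a combinatorial invariant — for instance a $W$-conjugacy datum of its ``semisimple part'' in the limiting torus — that recovers $w$, whence $\epsilon_+^w(0)$ and $\epsilon_+^{w'}(0)$ are non-conjugate for $w\ne w'$. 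For surjectivity one must bound the number of regular nilpotent orbits by $|\mathrm{Cox}(W,S)|$; I would argue this by a degeneration argument identifying the components of $\cN^{\mathrm{reg}}$ with the possible limits in $V_{G_0}$ of a regular semisimple maximal torus, which are classified by Coxeter elements. I expect this last step — the passage from the geometry of the asymptotic semigroup to Coxeter combinatorics, both the non-conjugacy of the $\epsilon_+^w(0)$ and the exhaustiveness of the list — to be the main obstacle; the preceding steps are essentially formal once the structure of $V_{G_0}$ and the Steinberg sections recalled in the excerpt are in hand.
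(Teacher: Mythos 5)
Your formal reductions are sound and, as far as they go, consistent with how this proposition is actually established: the observation that $\epsilon_+^w(0)\in\cN^{\mathrm{reg}}$ follows from Proposition~\ref{extend-steinberg-section-prop}, the dimension count $\dim\cN^{\mathrm{reg}}=\dim G_0-r$ via smoothness of $\chi_+$ on the regular locus, and the conclusion that the $G$-orbits in $\cN^{\mathrm{reg}}$ are open and coincide with its irreducible components are all correct (and the identification $G\cdot\ga=G_0\cdot\ga$ is fine since conjugation factors through the adjoint group). But you should be aware that the paper does not prove any of this directly: its proof is a citation to [Bou15, Lemme 1.17 and Propositions 2.7, 2.9, 2.10], which in turn rest on He's work on Steinberg fibers in the wonderful compactification and Lusztig's parabolic character sheaves. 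What you have written re-derives the soft part of those results and defers exactly the hard part.

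The genuine gaps are the three places you yourself flag as ``where the structure theory enters'' and ``the main obstacle'', and these are not minor: (i) the density of $\cN^{\mathrm{reg}}$ in $\cN$ (equivalently, flatness/equidimensionality of $\chi_+^{-1}(0)$ together with the absence of components contained in the complement of the regular locus) and the estimate that $\cN\setminus V_{G_0}^0$ contains no full component are asserted, not argued — neither a finiteness-of-orbits statement for $\cN$ nor a boundary dimension bound is supplied; (ii) the injectivity of $w\mapsto G\cdot\epsilon_+^w(0)$, i.e.\ that $\epsilon_+^w(0)$ and $\epsilon_+^{w'}(0)$ are non-conjugate for $w\ne w'$, is reduced to an unspecified ``combinatorial invariant''; (iii) the surjectivity, i.e.\ that there are \emph{exactly} $|\mathrm{Cox}(W,S)|$ open orbits in $\cN$, is reduced to an unspecified ``degeneration argument''. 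Items (ii) and (iii) are the actual content of the proposition — they are why the count is $2^{r-1}$ per simple factor rather than $1$ as in the Lie algebra case — and there is no known elementary proof of them; they require the classification of $G$-orbits (or $G$-stable pieces) in the asymptotic semigroup from [He06] and [Lu04], [Lu04b]. As it stands, the proposal is a correct reduction of the proposition to precisely the statements the paper imports from the literature, not a proof of it.
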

\begin{proof}
The fact that $\epsilon_+^w(0)\in\cN^{\mathrm{reg}}$ is \citep[Lemme 1.17]{Bou15}. The other statements are \citep[Proposition 2.7,2.9,2.10]{Bou15} where they are deduced from results in \citep{He06} and \citep{Lu04}, \citep{Lu04b}.
\end{proof}
\begin{rem}
Recall that in the Lie algebra case, the fibers of the map $\fg^{\mathrm{reg}}\to\fc$ are irreducible and consist of a single adjoint $G$-orbit.
\end{rem}
\subsubsection{Universal centralizer}
It is proved in \cite{Bou15} that there exists a smooth commutative group scheme $\cJ_0$ over $\fC_+$ and a canonical homomorphism $\chi_+^*\cJ_0\to\cI_0$ which becomes an isomorphism when restricted to $V_{G_0}^{\mathrm{reg}}$. Moreover, for each $w\in\mathrm{Cox}(W,S)$, there is an isomorphism
\[(\epsilon_+^w)^*\cI_0\cong\cJ_0.\]
Define $\cJ:=(\cJ_0\times Z)/Z_0$. Then there is a canonical homomorphism $\chi_+^*\cJ\to\cI$ which is an isomorphism when restricted to $V_{G_0}^{\mathrm{reg}}$ and we also have an isomorpihsm
\[(\epsilon_+^w)^*\cI\cong\cJ\]
for each $w\in\mathrm{Cox}(W,S)$.\par 
Moreover, the morphism
\[\xymatrix@R=1pt{
c_w: G\times\fC_+\ar[r] & V_{G_0}^{\mathrm{reg}}\\
(g,a)\ar@{|->}[r] & g\epsilon_+^w(a)g^{-1}
}\]
factors through $(G\times\fC_+)/\cJ$ and induces an open embedding whose image we denote by $V_{G_0}^w$. It is clear that $V_{G_0}^w$ is stable under adjoint action of $G$. Moreover, it is shown in \cite{Bou15} that they form an open cover of 
$V_{G_0}^{\mathrm{reg}}$:
\begin{equation}\label{open-cover-V-reg-eq}
V_{G_0}^{\mathrm{reg}}=\bigcup_{w\in\mathrm{Cox}(W,S)}V_{G_0}^w
\end{equation}
see the proof of Proposition 2.12 in \textit{loc.cit.}
\begin{prop}\label{BJ-action-on-V_G-prop}
$\bB\cJ$ acts naturally on $[V_{G_0}/G]$. The action preserves the open substacks $[V_{G_0}^0/G]$, $[V_{G_0}^\mathrm{reg}/G]$ and $[V_{G_0}^w/G]$ for each $w\in\mathrm{Cox}(W,S)$. Moreover, the morphism
\[[\chi_+^w]: [V_{G_0}^w/G]\to\fC_+\]
induced by $\chi_+$ is a $\bB\cJ$ gerbe, neutralized by the extended Steinberg section $\epsilon_+^w$.
\end{prop}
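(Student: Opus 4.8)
The plan is to follow the strategy used by Ng\^o in the Lie algebra and group settings, adapted to the Vinberg monoid (compare the discussion around \cite[Proposition~2.12]{Bou15}). The starting point is the general principle that, for a smooth commutative group scheme $J$ over a scheme $S$ and a stack $\mathcal X$ over $S$, giving an action of the Picard stack $\bB J$ on $\mathcal X$ relative to $S$ amounts to giving a homomorphism of group spaces over $\mathcal X$
\[J\times_S\mathcal X\longrightarrow I_{\mathcal X/S},\]
where $I_{\mathcal X/S}$ is the relative inertia; one recovers the action by twisting a point of $\mathcal X$ by a $J$-torsor through this homomorphism. I would apply this with $S=\fC_+$, $\mathcal X=[V_{G_0}/G]$ and the structure morphism $\chi_+\colon[V_{G_0}/G]\to\fC_+$. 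Since $\fC_+$ is a scheme, the relative inertia coincides with the absolute inertia of $[V_{G_0}/G]$, which is the quotient stack $[\cI/G]$ for the conjugation action of $G$ on $\cI\subset G\times V_{G_0}$; and pulling $\cJ$ back along $\chi_+$ gives $[\chi_+^*\cJ/G]$, with $G$ acting trivially in the $\cJ$-direction. The canonical homomorphism $\chi_+^*\cJ\to\cI$ recalled before the proposition is $G$-equivariant (this is built into the construction of $\cJ$: it is descended from the commutative group scheme $\cI|_{V_{G_0}^{\mathrm{reg}}}$ via the extended Steinberg sections), so passing to quotient stacks it produces $J\times_{\fC_+}[V_{G_0}/G]=[\chi_+^*\cJ/G]\to[\cI/G]=I_{[V_{G_0}/G]/\fC_+}$, hence the desired $\bB\cJ$-action.

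To see that the open substacks are preserved, I would note that each of $V_{G_0}^0$, $V_{G_0}^{\mathrm{reg}}$, $V_{G_0}^w$ is a $G$-stable open of $V_{G_0}$, hence defines an open substack of $[V_{G_0}/G]$, and the homomorphism $\chi_+^*\cJ\to\cI$ restricts over it to the homomorphism of inertia group schemes for the corresponding quotient stack. Thus the $\bB\cJ$-action restricts compatibly, i.e.\ the open immersions $[V_{G_0}^w/G]\hookrightarrow[V_{G_0}^{\mathrm{reg}}/G]\hookrightarrow[V_{G_0}^0/G]\hookrightarrow[V_{G_0}/G]$ are $\bB\cJ$-equivariant. (Conceptually, the action only acts in the ``gerbe direction'' and does not move the coarse point, so any $G$-stable open is automatically preserved.)

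For the gerbe statement I would use the open immersion $(G\times\fC_+)/\cJ\xrightarrow{\sim}V_{G_0}^w$ induced by $c_w$, where $\cJ$ (pulled back via $G\times\fC_+\to\fC_+$) acts on the first factor via the identification $\cJ\xrightarrow{\sim}(\epsilon_+^w)^*\cI$ and the inclusion of the latter into $G\times\fC_+$, while $G$ acts by left translation on the first factor; under $c_w$ this left $G$-action corresponds to the conjugation action on $V_{G_0}^w$, and it commutes with the $\cJ$-action. Forming the stack quotient by $G$ and using that $G$ acts freely on $G\times\fC_+$ with quotient $\fC_+$ (on which the residual $\cJ$-action is trivial), one gets canonical isomorphisms over $\fC_+$
\[[V_{G_0}^w/G]\;\cong\;\bigl[(G\times\fC_+)/(G\times\cJ)\bigr]\;\cong\;[\fC_+/\cJ]=\bB_{\fC_+}\cJ,\]
with $[\chi_+^w]$ identified with the structure morphism. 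The latter is the neutral $\bB\cJ$-gerbe over $\fC_+$, and the section $\epsilon_+^w\colon\fC_+\to[V_{G_0}^w/G]$ (which indeed lands in $V_{G_0}^w$, being $a\mapsto c_w(e,a)$) provides its neutralization. It remains to check that the $\bB\cJ$-action of the first paragraph, transported along this isomorphism, is the translation action of $\bB\cJ$ on itself; but over $V_{G_0}^{\mathrm{reg}}\supset V_{G_0}^w$ the homomorphism $\chi_+^*\cJ\to\cI$ is an isomorphism, so the action is via the full inertia, which is exactly the translation action. Equivalently, this comes down to the fact, recalled before the proposition, that $(\epsilon_+^w)^*$ of $\chi_+^*\cJ\to\cI$ is the isomorphism $\cJ\xrightarrow{\sim}(\epsilon_+^w)^*\cI$.

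The formal bookkeeping in the first paragraph (the dictionary between $\bB\cJ$-actions and homomorphisms to the inertia, with its unit and associativity constraints) is routine but notationally heavy. I expect the only genuine point to be in the last paragraph: getting the $\cJ$-action on $G\times\fC_+$ right and verifying that the abstract $\bB\cJ$-action agrees with the translation action after the identification $[V_{G_0}^w/G]\cong\bB_{\fC_+}\cJ$ --- that is, keeping careful track of how $\epsilon_+^w$ intertwines $(\epsilon_+^w)^*\cI$ with $\cJ$.
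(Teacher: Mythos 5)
Your proposal is correct and follows exactly the strategy the paper invokes: the paper's entire proof is the citation ``the same as \cite[Proposition 2.2.1]{Ngo10}'', and your argument is a faithful expansion of Ng\^o's construction (action via the homomorphism $\chi_+^*\cJ\to\cI$ into the inertia, preservation of $G$-stable opens, and identification of $[V_{G_0}^w/G]$ with $\bB_{\fC_+}\cJ$ through the open embedding induced by $c_w$ and the section $\epsilon_+^w$). No gaps to report.
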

The proof is the same as \cite[Proposition 2.2.1]{Ngo10}.
\begin{prop}
The number of irreducible components of the fibers of the map 
\[\chi_+^{\mathrm{reg}}:V_{G_0}^{\mathrm{reg}}\to\fC_+\]
is bounded above by $|\mathrm{Cox}(W,S)|$ and equality is achieved at $\cN^{\mathrm{reg}}=(\chi_+^{\mathrm{reg}})^{-1}(0)$.
\end{prop}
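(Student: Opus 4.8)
The plan is to extract both assertions from the open cover \eqref{open-cover-V-reg-eq} of $V_{G_0}^{\mathrm{reg}}$ by the $G$-stable opens $V_{G_0}^w$, $w\in\mathrm{Cox}(W,S)$, together with the gerbe description of Proposition~\ref{BJ-action-on-V_G-prop} and the component count of Proposition~\ref{nilp-cone-prop}. First I would fix $a\in\fC_+$ and analyze a single piece $(\chi_+^w)^{-1}(a)=V_{G_0}^w\cap(\chi_+^{\mathrm{reg}})^{-1}(a)$. By construction $V_{G_0}^w$ is the image of the morphism $c_w\colon G\times\fC_+\to V_{G_0}^{\mathrm{reg}}$, $(g,b)\mapsto g\epsilon_+^w(b)g^{-1}$, which factors through an isomorphism $(G\times\fC_+)/\cJ\xrightarrow{\sim}V_{G_0}^w$ (equivalently, by Proposition~\ref{BJ-action-on-V_G-prop} the map $[\chi_+^w]\colon[V_{G_0}^w/G]\to\fC_+$ is a $\bB\cJ$-gerbe neutralized by $\epsilon_+^w$). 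Since $\chi_+$ is conjugation invariant and $\chi_+(\epsilon_+^w(a))=a$, this identifies $(\chi_+^w)^{-1}(a)$ with the single $G$-orbit $G\cdot\epsilon_+^w(a)\cong G/\cJ_a$; in particular it is irreducible (it is the image of the irreducible variety $G$).

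Next, since the $V_{G_0}^w$ cover $V_{G_0}^{\mathrm{reg}}$ by \eqref{open-cover-V-reg-eq}, the fiber $(\chi_+^{\mathrm{reg}})^{-1}(a)$ is the union of the finitely many irreducible locally closed subsets $(\chi_+^w)^{-1}(a)$, $w\in\mathrm{Cox}(W,S)$. Now a topological space that is a union of $n$ irreducible subsets has at most $n$ irreducible components, because every irreducible component is contained in, hence equal to the closure of, one of the covering subsets. Therefore $(\chi_+^{\mathrm{reg}})^{-1}(a)$ has at most $|\mathrm{Cox}(W,S)|$ irreducible components, which is the desired upper bound.

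Finally, for $a=0$ we have $(\chi_+^{\mathrm{reg}})^{-1}(0)=\cN^{\mathrm{reg}}$ by definition of $\cN^{\mathrm{reg}}$, and Proposition~\ref{nilp-cone-prop} supplies a bijection $\mathrm{Cox}(W,S)\xrightarrow{\sim}\mathrm{Irr}(\cN^{\mathrm{reg}})$ sending $w$ to the component containing $\epsilon_+^w(0)$ (each such component being the open $G$-orbit $G\cdot\epsilon_+^w(0)=(\chi_+^w)^{-1}(0)$). Hence the fiber over $0$ has exactly $|\mathrm{Cox}(W,S)|$ irreducible components, so the bound is attained there.

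I expect no serious obstacle: once the earlier structural results are in hand the argument is essentially formal. The only points requiring a bit of care are confirming that each nonempty piece $(\chi_+^w)^{-1}(a)$ really is irreducible — which is immediate from its being a single $G$-orbit, itself a consequence of the gerbe $[\chi_+^w]$ being neutralized by the Steinberg section — and the elementary fact that a finite cover by irreducibles bounds the number of components; all of the substantive input, namely the cover \eqref{open-cover-V-reg-eq} and the identification $\mathrm{Irr}(\cN^{\mathrm{reg}})\cong\mathrm{Cox}(W,S)$ of Proposition~\ref{nilp-cone-prop}, has already been established.
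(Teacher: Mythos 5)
Your argument is correct and follows exactly the route the paper takes: the upper bound comes from the open cover \eqref{open-cover-V-reg-eq} (with each piece $(\chi_+^w)^{-1}(a)$ irreducible as a single $G$-orbit via the neutralized gerbe), and the equality at $0$ is Proposition~\ref{nilp-cone-prop}. The paper's proof is just a two-line citation of these same two facts; you have merely supplied the routine details.
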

\begin{proof}
The first statement follows from \eqref{open-cover-V-reg-eq}. The second statement is in Proposition~\ref{nilp-cone-prop}.
\end{proof}
\begin{rem}
Consequently, unless all simple factors of $G_0$ are $\mathrm{SL}_2$, the action of $\bB\cJ$ on $[V_{G_0}^{\mathrm{reg}}/G]$ is not transitive. In other words, $[V_{G_0}^{\mathrm{reg}}/G]$ is not a $\bB\cJ$-gerbe, but rather a finite union of $\bB\cJ$ gerbes as in Proposition~\ref{BJ-action-on-V_G-prop}. This is different from Lie algebra situation, cf \citep[Proposition 2.2.1]{Ngo10}.
\end{rem}

\subsubsection{Galois description of $\cJ$}
Let $\prod\limits_{V_{T_0}/\fC_+}(T\times V_{T_0})$ be the restriction of scalar which associates to any $\fC_+$-scheme $S$ the set
\[\prod_{V_{T_0}/\fC_+}(T\times V_{T_0})(S)=\Hom_{V_{T_0}}(V_{T_0}\times_{\fC_+}S,T\times V_{T_0})\]
Then $W$ acts diagonally on $\prod\limits_{V_{T_0}/\fC_+}(T\times V_{T_0})$ and consider its fixed point subscheme
\[\cJ^1:=(\prod_{V_{T_0}/\fC_+}^{}T\times V_{T_0})^W.\]
The following is proved in \cite[Proposition 11]{Bou17}.
\begin{prop}\label{Galois-description-J-prop}
$\cJ^1$ is a smooth commutative group scheme over $\fC_+$. There exists a canonical homomorphism $\cJ\to\cJ^1$ which is an open embedding. In particular, we have a canonical isomorphism 
\[\mathrm{Lie}(\cJ)=\mathrm{Lie}(\cJ^1)\]
\end{prop}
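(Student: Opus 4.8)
One way to prove this is to transport Ng\^o's Galois description of the regular centralizer \cite[\S 2.4]{Ngo10} to the Vinberg monoid. For the smoothness of $\cJ^1$, first note that $\pi\colon V_{T_0}\to\fC_+$ is finite and flat of degree $|W|$: it is finite because $\fC_+=V_{T_0}//W$ and $V_{T_0}$ is affine over it; $V_{T_0}$ is Cohen--Macaulay (being a normal affine toric variety), $\fC_+\cong\bA^{2r}$ is regular, and the fibers are zero-dimensional, so flatness follows by miracle flatness; and the degree equals $|W|$ because $\pi$ is a $W$-torsor over $\fC_+^{\mathrm{rs}}$. Hence $\cM:=\prod_{V_{T_0}/\fC_+}(T\times V_{T_0})$ is a smooth affine commutative group scheme over $\fC_+$, since Weil restriction along a finite flat morphism preserves smoothness, affineness and the group law, and $T$ is commutative. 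The Weyl group $W$ acts on $\cM$ by $\fC_+$-group automorphisms, diagonally through its actions on $T$ and on $V_{T_0}$, and $\cJ^1=\cM^W$. Because $|W|$ is invertible in $k$ by our standing hypothesis, $\cJ^1\to\fC_+$ is smooth: it is of finite presentation as a closed subscheme of $\cM$, and it is formally smooth over $\fC_+$ because for a square-zero thickening the obstruction to lifting a $W$-fixed point $W$-equivariantly lies in $H^1(W,M)$ for a suitable $k[W]$-module $M$, which vanishes when $|W|$ is invertible. As a closed subgroup of the commutative $\cM$ it is commutative.

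To produce the homomorphism $\cJ\to\cJ^1$, I would imitate the reductive-group case through a Grothendieck--Springer picture for $V_{G_0}$. The essential input is a canonical homomorphism $\kappa\colon\mathrm{pr}^*\cI\to T\times Y$, over $Y:=V_{T_0}\times_{\fC_+}V_{G_0}^{\mathrm{reg}}$ (which is finite flat over $V_{G_0}^{\mathrm{reg}}$), from the centralizer to the constant maximal torus: it is built, as for a reductive group, from the fact that a regular element contained in a Borel submonoid $\bar B$ has its centralizer inside $\bar B$, hence --- being a subgroup of $G$ --- mapping to the unit group $T$ of the Cartan $\bar B/\bar U$, the extra factor $V_{T_0}$ recording which sheet one sits on. Pushing $\kappa$ forward along the finite flat map $Y\to V_{G_0}^{\mathrm{reg}}$ and taking $W$-invariants gives $\cI|_{V_{G_0}^{\mathrm{reg}}}\to\bigl(\prod_{Y/V_{G_0}^{\mathrm{reg}}}T\bigr)^W=\chi_+^*\cJ^1$; composing with the canonical isomorphism $\cJ\cong(\epsilon_+^w)^*\cI$ coming from a Steinberg section and pulling the displayed map back along $\epsilon_+^w$ yields $\cJ\to(\epsilon_+^w)^*\chi_+^*\cJ^1=\cJ^1$, which one then checks is independent of $w$ and of the choices of representatives.

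That this map is an open embedding I would argue as follows. Over $\fC_+^{\mathrm{rs}}$ it is an isomorphism, since there $\pi$ is a $W$-torsor, regular semisimple centralizers in $V_{G_0}$ are maximal tori (the $\theta$-preimage of $T_0^+$), and both $\cJ$ and $\cJ^1$ are the descent of the constant torus $T$ along this torsor. In general $\mathrm{Lie}(\cJ)\to\mathrm{Lie}(\cJ^1)$ is a morphism of locally free sheaves of rank $\mathrm{rank}(G)$ on $\fC_+$, both group schemes being smooth of that relative dimension; since $\fC_+\cong\bA^{2r}$ is normal, it suffices to show this is an isomorphism at the generic point of each prime divisor, as then its determinant is a unit. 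Over $\fC_+^{\mathrm{rs}}$ this is done, and the remaining prime divisors lie in the discriminant divisor $\{\fD_+=0\}$, where one reduces either to a direct subregular computation or, over the open locus on which $V_{G_0}$ restricts to a translate of its unit group, to the Galois description of the regular centralizer of the reductive group $G$ --- itself provable by Ng\^o's method from the Grothendieck--Steinberg resolution $\tilde G\to G$, $\tilde G\to T$. Once $\mathrm{Lie}(\cJ)\to\mathrm{Lie}(\cJ^1)$ is an isomorphism, $\cJ\to\cJ^1$ is \'etale; its kernel is then \'etale, hence flat, over $\fC_+$ and trivial over the dense open $\fC_+^{\mathrm{rs}}$, hence trivial (a nontrivial open-and-closed part would be a nonempty \'etale scheme over the irreducible $\fC_+$, and would therefore meet $\fC_+^{\mathrm{rs}}$). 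Thus $\cJ\to\cJ^1$ is an \'etale monomorphism, i.e.\ an open immersion, and $\mathrm{Lie}(\cJ)=\mathrm{Lie}(\cJ^1)$ follows, an open immersion of group schemes being an isomorphism near the identity section.

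The step I expect to be the main obstacle is the construction of $\kappa$ and the control of it away from the regular semisimple locus --- equivalently, making the Grothendieck--Springer picture work over the singular monoid $V_{G_0}$, together with the analogous statement for the reductive group $G$ used in the codimension-one analysis; granting these, the finite-flatness input for $\cJ^1$ and the open-immersion argument above are routine. One should also keep careful track of the normalizations (for instance $Z\cap G_0$ versus $Z_0$), so that the torus $T$ built into $\cJ^1$ matches the one appearing in $\cJ=(\cJ_0\times Z)/Z_0$.
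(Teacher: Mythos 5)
The paper does not prove this proposition at all: it is quoted verbatim from the literature, with proof deferred to \cite[Proposition 11]{Bou17}. Your proposal is a reconstruction of the argument that reference actually runs, namely Ng\^o's Galois description of the regular centralizer (\cite[\S 2.4]{Ngo10}, after Donagi--Gaitsgory) transported to the Vinberg monoid. The parts you work out in detail are sound: finite flatness of $V_{T_0}\to\fC_+$ by miracle flatness, smoothness of the Weil restriction and of its $W$-fixed points using the standing hypothesis that $|W|$ is invertible, and the deduction ``\'etale $+$ trivial kernel $\Rightarrow$ monomorphism $\Rightarrow$ open immersion'' together with the codimension-one reduction on the normal base $\fC_+\cong\bA^{2r}$. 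These are the routine parts, and you correctly identify them as such.

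The genuine gaps are exactly the two steps you flag but do not carry out, and they are where all the content of \cite[Proposition 11]{Bou17} lives. First, the construction of $\kappa\colon\mathrm{pr}^*\cI\to T\times Y$ requires a Grothendieck--Springer-type incidence variety for the singular monoid $V_{G_0}$ and the statement that the centralizer of a regular element of a ``Borel submonoid'' lands in the torus quotient; neither exists off the shelf, and asserting them is not proving them. Second, your codimension-one analysis is incomplete in a specific way: the discriminant divisor $\{\fD_+=0\}$, with $\fD_+(t_1,t_2)=2\rho(t_1)\fD(t_2)$, has two kinds of components. Those with $\fD(t_2)=0$ and $t_1$ in the open torus do reduce to the reductive group $G$, as you say; but the components coming from $2\rho(t_1)=0$ are boundary divisors of the toric part of $V_{T_0}$, where $V_{G_0}$ does \emph{not} restrict to a translate of its unit group and no reduction to the group case is available. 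Your phrase ``a direct subregular computation'' is a placeholder for a genuinely monoid-specific rank-one/boundary analysis that must be done (and is done in the reference). As written, the proposal is a correct roadmap with the two hard steps left open, so it does not constitute a proof.
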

In fact, the image of $\cJ$ in $\cJ^1$ can be described explicitly in \emph{loc. cit.}

\subsection{A special class of reductive monoids}\label{special-monoid-subsection}
We construct a special class of reductive monoids over $\cO$ from Vinberg's universal monoid. These monoids will be used to describe the generalized affine Springer fibers.
\subsubsection{}
Recall that $G_0$ is simply connected. Let $\lambda\in\Lambda^+$ be a dominant coweight of $G$. We define a monoid $V_G^\lambda$ over $\spec\cO$ by the following Cartesian diagram
\[\xymatrix{
V^\lambda_G\ar[r]\ar[d] & V_{G_0}\ar[d]^\alpha\\
\spec\cO\ar[r]^{-w_0(\lambda)} & A_{G_0}
}\]
where the bottom arrow is defined by the element 
\[(\varpi^{\langle-w_0(\lambda),\alpha_i\rangle})_{1\le i\le r}\in A_{G_0}(\cO)\cong\cO^r.\] 
For each $S$-Coxeter element $w\in\mathrm{Cox}(W,S)$, we define an open subshcemes $V_G^{\la,w}\subset V_G^{\la,\mathrm{reg}}\subset V_G^{\la,0}$ by pulling back $V_{G_0}^w\subset V_{G_0}^{\mathrm{reg}}\subset V_{G_0}^0$ along $-w_0(\la)$. By \eqref{open-cover-V-reg-eq}, $V_G^{\la,w}$ form an open cover of $V_G^{\la,\mathrm{reg}}$:
\begin{equation}\label{open-cover-V-lambda-reg-eq}
V_G^{\la,\mathrm{reg}}=\bigcup_{w\in\mathrm{Cox}(W,S)}
V_G^{\la,w}.
\end{equation}
\begin{rem}
The open subschemes $V_{G}^{\la,w}$ for $w\in\mathrm{Cox}(W,S)$ may not be mutually distinct. For example, if $\la=0$, then $V_G^\la=G_0$ and $V_G^{\la,w}=G_0^{\mathrm{reg}}$ for all $w$. In the other extreme, if $\la$ is dominant regular, in other words, $\langle\alpha_i,\la\rangle>0$ for all $1\le i\le r$, then $V_G^{\la,w}$ are mutually distinct.
\end{rem}
\subsubsection{}\label{C-plus-la-section}
Similarly, let $V_T^\lambda$ (resp. $\fC_+^\lambda$) be the pull back of $V_{T_0}$ (resp. $\fC_+$) under the map $-w_0(\lambda)$. Let 
\[\chi_\la: V_G^\la\to \fC_+^\la\]
be the base change of $\chi_+$. Then the canonical map 
\begin{equation}\label{cameral-cover-lambda-eq}
q_\la: V_T^\la\to\fC_+^\la
\end{equation}
is finite flat and generically a $W$-torsor. It is ramified along the divisor
\[\fD_+^\la:=(-w_0(\la))^*\fD_+\subset\fC_+^\la.\] 
\subsubsection{}\label{la-plus-section}
Let $\bar\la$ be the image of $\la$ in $X_*(T_0^{\mathrm{ad}})_+$ and consider the element
\[\varpi^{-w_0(\bar\la)}=(\varpi^{\langle-w_0(\la),\alpha_i\rangle})_{1\le i\le r}\in T_0^{\mathrm{ad}}(F)\subset A_{G_0}(F)\cong F^r.\]
By definition, we have
\[V_G^\la(F)=\{g\in G_0^+(F) | \alpha(g)=\varpi^{-w_0(\bar\la)}\}.\]
In particular, there is a canonical embedding $L^+V_G^\la\into LG_0^+$ and we would like to describe its image. For this we consider the element
\[\la^+:=(-w_0(\bar\la), w_0(\bar\la))\in X_*(T_0^+)\]
where we use description of $X_*(T_0^+)$ as in \eqref{T-plus-coweight-lattice-eq}. Then we have 
\[\varpi^{\la^+}=s(\varpi^{-w_0(\bar\la)})\in T_0^+(F)\]
 where $s$ is the canonical section of $\alpha$, cf. \eqref{section-alpha-eq}.

\begin{lem}\label{V-lambda-arc-space-lem}
With notations as above, we have
\[L^+V_G^{\la,0}=L^+G_0\varpi^{\la^+} L^+G_0\]
and
\[L^+V_G^\la = \bigcup_{\substack{\mu^+\in X_*(T_0^+)_+\\ \mu^+\le \la^+}}L^+G_0\varpi^{\mu^+}L^+G_0.\] 
\end{lem}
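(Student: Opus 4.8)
The plan is to transport the statement into the loop group $G_0^+(F)$ and argue by a Cartan–decomposition. By the description of $V_G^\la(F)$ recalled just above, an $\cO$-point of $V_G^\la$ is the same datum as an element $g\in G_0^+(F)$ with $\alpha(g)=\varpi^{-w_0(\bar\la)}$ which in addition lies in $V_{G_0}(\cO)$, and it factors through the open subscheme $V_G^{\la,0}$ exactly when it lies in $V_{G_0}^0(\cO)$. The first thing I would do is make the two integrality conditions concrete: since $V_{G_0}$ (resp.\ $V_{G_0}^0$) is the normalization of the closure of $G_0^+$ in $\bA^r\times\prod_i\mathrm{End}(V_{\omega_i^0})$ (resp.\ in $\bA^r\times\prod_i(\mathrm{End}(V_{\omega_i^0})-\{0\})$), applying the valuative criterion to the finite normalization map and using that $\spec\cO$ is local shows that, for $g\in G_0^+(F)$, one has $g\in V_{G_0}(\cO)$ iff each matrix $\rho_i^+(g)$ is integral, and $g\in V_{G_0}^0(\cO)$ iff moreover $\rho_i^+(g)\not\equiv 0\pmod\varpi$ for all $i$. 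This reduces the lemma to an orbit computation inside $G_0^+(F)$.

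For the inclusions ``$\supseteq$'' I would compute $\rho_i^+$ on torus elements: for a cocharacter $\mu^+$ of $T_0^+$ the matrix $\rho_i^+(\varpi^{\mu^+})$ is diagonal in a weight basis of $V_{\omega_i^0}$ with entries $\varpi^{\langle\eta,\mu^+\rangle}$, $\eta$ running over the weights of $\rho_i^+$. Taking $\mu^+=\la^+$: because $\varpi^{\la^+}=s(\varpi^{-w_0(\bar\la)})$ with $s$ the canonical section, whose image is the closure of the diagonal torus, and because $-w_0(\bar\la)$ is dominant, the highest-weight entry is $\varpi^0=1$ while all the others are nonnegative powers of $\varpi$; hence $\varpi^{\la^+}\in V_{G_0}^0(\cO)$, and since $\alpha(\varpi^{\la^+})=\varpi^{-w_0(\bar\la)}$ and $V_{G_0}^0$ is stable under $G_0\times G_0$, this gives $L^+G_0\,\varpi^{\la^+}\,L^+G_0\subseteq L^+V_G^{\la,0}$. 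The same computation for a dominant $\mu^+\le\la^+$ yields integrality of $\rho_i^+(\varpi^{\mu^+})$ together with $\alpha(\varpi^{\mu^+})=\varpi^{-w_0(\bar\la)}$, hence $L^+G_0\,\varpi^{\mu^+}\,L^+G_0\subseteq L^+V_G^\la$.

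For ``$\subseteq$'' I would start from the Cartan decomposition of the split reductive group $G_0^+$: any $g$ as above lies in $L^+G_0^+\,\varpi^{\nu^+}\,L^+G_0^+$ for a unique dominant $\nu^+$. Using $G_0^+(\cO)=G_0(\cO)\cdot T_0^+(\cO)$ (valid since $\mathrm{Pic}(\spec\cO)=0$), that $T_0^+(\cO)$ commutes with $\varpi^{\nu^+}\in T_0^+(F)$, that $\alpha$ kills $G_0$, and that $\ker(\alpha\colon T_0^+(\cO)\to T_0^{\mathrm{ad}}(\cO))=T_0(\cO)\subset G_0(\cO)$, one rewrites $g=g_1\,\varpi^{\nu^+}\,g_2$ with $g_1,g_2\in G_0(\cO)$ and deduces that the $\alpha$-image of $\nu^+$ is forced to be $-w_0(\bar\la)$. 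Integrality of $g$ then forces $\varpi^{\nu^+}\in V_{G_0}(\cO)$, i.e.\ $\langle\eta,\nu^+\rangle\ge0$ for all weights $\eta$ of all $\rho_i^+$; together with the constraint on the $\alpha$-image this amounts to $\nu^+\le\la^+$, which settles $L^+V_G^\la$. If moreover $g\in V_{G_0}^0(\cO)$, then $\min_\eta\langle\eta,\nu^+\rangle=0$ for each $i$, and this extra vanishing forces the $G_0$-component $\nu_2$ of $\nu^+$ (already dominant, as $\nu^+$ is) to equal $\bar\la$; conjugating by a representative of $w_0$ in $N_{G_0}(T_0)(\cO)$ then moves $\varpi^{\nu^+}$ to $\varpi^{\la^+}$, giving $L^+V_G^{\la,0}=L^+G_0\,\varpi^{\la^+}\,L^+G_0$.

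The heart of the matter, and the step I expect to be the main obstacle, is the assertion just invoked in the $^0$-case: among cocharacters of $T_0^+$ with $\alpha$-image $-w_0(\bar\la)$, the requirement that $\varpi^{\nu^+}$ meet the nondegenerate locus $V_{G_0}^0$ pins $\nu^+$ down to (a $W$-conjugate of) $\la^+$ — concretely, this reflects that $\varpi^{\la^+}$ is the section $s$ applied to $\varpi^{-w_0(\bar\la)}$, the ``most nondegenerate'' lift, living in the closure of $T_0^\Delta$. I would prove it by unwinding $\min_\eta\langle\eta,\nu^+\rangle=0$ for all $i$: since the weights of $\rho_i^+$ have highest weight $(\omega_i^0,\omega_i^0)$ and the others differ from it by simple roots acting on the $G_0$-factor, the condition becomes $\langle w_0\omega_i^0,\nu_2-\bar\la\rangle=0$ for all $i$ after passing to the dominant representative of $\nu_2$, and since the $w_0\omega_i^0$ span the character lattice of $T_0$ rationally this forces $\nu_2=\bar\la$, i.e.\ the $G_0$-factor of $\nu^+$ lies in the Weyl orbit of that of $\la^+$. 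A subtlety that must be handled carefully is that inside the monoid $V_{G_0}$, unlike inside a reductive group, the Weyl group acts on the torus cocharacter only through the $G_0$-factor and only by simultaneous left–right translation by $N_{G_0}(T_0)(\cO)$; this is exactly why $L^+V_G^{\la,0}$ collapses to a single $L^+G_0$-double coset rather than inheriting a full Cartan-type stratification.
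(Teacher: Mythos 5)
Your proposal is correct, and for the main (second) identity it follows the paper's route exactly: reduce via the Cartan decomposition of $G_0^+(F)$ to the torus elements $\varpi^{\tilde\mu}$, then test integrality through the matrix coefficients of the representations $\rho_i^+$, whose minimal valuation on a dominant $\tilde\mu$ is $\langle w_0(\omega_i^+),\tilde\mu\rangle$; combined with the constraint $\alpha(\varpi^{\tilde\mu})=\varpi^{-w_0(\bar\la)}$ this is precisely $\tilde\mu\le\la^+$. You also supply two steps the paper leaves implicit, both correctly: the valuative-criterion argument identifying $V_{G_0}(\cO)\cap G_0^+(F)$ (resp.\ $V_{G_0}^0(\cO)\cap G_0^+(F)$) with the locus where all $\rho_i^+(g)$ are integral (resp.\ integral and nonzero mod $\varpi$), and the reduction from $L^+G_0^+$-double cosets to $L^+G_0$-double cosets using $G_0^+(\cO)=G_0(\cO)\cdot T_0^+(\cO)$ and the fact that the condition on $\alpha(g)$ forces the central factor into $\ker(\alpha|_{T_0^+(\cO)})=T_0(\cO)\subset G_0(\cO)$.

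Where you genuinely diverge is the first identity. The paper deduces that $L^+V_G^{\la,0}$ is a single $L^+G_0\times L^+G_0$-orbit from the transitivity of the $G_0\times G_0$-action on the fibres of $\alpha|_{V_{G_0}^0}$ (which, to pass from geometric transitivity to transitivity on $\cO$-points, implicitly uses smoothness of the orbit map and vanishing of the relevant cohomology over $\cO$), and then locates the orbit via the section $s$. You instead stay inside the Cartan framework and show directly that among dominant $\nu^+$ with $\alpha$-image $\varpi^{-w_0(\bar\la)}$, the nonvanishing conditions $\min_\eta\langle\eta,\nu^+\rangle=0$ for every $i$ pin the $G_0$-component down to $\bar\la$, i.e.\ $\nu^+=w_0(\la^+)$, whence a single double coset. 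Your computation here is correct (the conditions read $\langle\omega_j^0,\bar\la-\nu_2\rangle=0$ for all $j$, and the fundamental weights span $X^*(T_0)_\bQ$), and it buys a more self-contained, purely combinatorial proof that avoids the orbit-lifting issue; the paper's argument is shorter but rests on the geometric homogeneity statement for $V_{G_0}^0$. The only blemish is a harmless sign/convention ambiguity (shared with the paper) about whether $\la^+$ or $w_0(\la^+)$ is the dominant representative; since the double coset is $w_0$-invariant this does not affect the statement.
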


\begin{proof}
The action of $G_0\times G_0$ on $V_{G_0}^0$ is transitive on the fibres of the abelianisation map $\alpha$. Hence $L^+V_G^{\la,0}$ is a single $L^+G_0\times L^+G_0$-orbit. As the image of the section $s$ is contained in $V_{G_0}^0$, we have $\varpi^{\la^+}\in V_G^{\la,0}(\cO)$ and hence the first equality.\par 
Next we prove the second equality. It is clear that both sides are stable under $L^+G_0\times L^+G_0$. By Cartan decomposition for $G_0^+(F)$, it suffices to show that for all $\tilde\mu\in X_*(T_0^+)_+$ we have $\varpi^{\tilde\mu}\in V_G^\la(\cO)$ if and only if $\tilde\mu\le\la^+$.\par 
For each $1\le i\le r$, the representation $\rho_i^+$ of $G_0^+$ has highest weight $\omega_i^+:=(\omega^0_i,\omega^0_i)\in X^*(T_0^+)$, cf. \S\ref{omega-i-section}. Then for any $\tilde\mu\in X_*(T_0^+)_+$, the lowest order of entries of the matrix $\rho_i^+(\varpi^{\tilde\mu})$ equals to $\langle w_0(\omega^+_i),\tilde\mu\rangle$. Hence $\varpi^{\tilde\mu}\in V_G^\la(\cO)$ if and only if $\langle w_0(\omega^+_i),\tilde\mu\rangle\ge\langle w_0(\omega^+_i),\la^+\rangle$ for all $1\le i\le r$ and $\alpha(\varpi^{\tilde\mu})=\alpha(\varpi^{\la^+})$. This means precisely that $\tilde\mu\le\la^+$. 
\end{proof}

\subsubsection{} 
For each $w\in\mathrm{Cox}(W,S)$, the (equivalence class of) extended Steinberg section $\epsilon_+^w$ in Proposition~\ref{extend-steinberg-section-prop} defines an equivalence class of sections of $\chi_\la$:
\[\epsilon_\la^w: \fC_+^\la\to V_G^{\la,w}\subset V_G^{\la,\mathrm{reg}}.\]
Let $J^\la$ be the smooth commutative group scheme over $\fC_+^\la$ defined as the pull-back of $\cJ$. Then we have the following consequence of \ref{BJ-action-on-V_G-prop}:
\begin{prop}\label{BJ-gerbe-V-lambda-prop}
The morphism $[\chi_\la^w]:[V_G^{\la,w}/G]\to\fC_+^\la$ induced by $\chi_\la$ is a $\bB J^\la$-gerbe, neutralized by the section $\epsilon_\la^w$.
\end{prop}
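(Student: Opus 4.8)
The plan is to deduce the statement from \propref{BJ-action-on-V_G-prop} purely by base change along the morphism $f:=-w_0(\la):\spec\cO\to A_{G_0}$ used to define $V_G^\la$ in \S\ref{special-monoid-subsection}.

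First I would record the base-change compatibilities that are built into the constructions of \S\ref{special-monoid-subsection} and \S\ref{C-plus-la-section}. Recall that $\fC_+\cong A_{G_0}\times\bA^r$ and that the composite $V_{G_0}\xrightarrow{\chi_+}\fC_+\to A_{G_0}$ is the abelianization $\alpha$. Directly from the definitions,
\[V_G^\la=V_{G_0}\times_{A_{G_0}}\spec\cO,\qquad \fC_+^\la=\fC_+\times_{A_{G_0}}\spec\cO,\qquad V_G^{\la,w}=V_{G_0}^w\times_{A_{G_0}}\spec\cO,\]
\[J^\la=\cJ\times_{\fC_+}\fC_+^\la=\cJ\times_{A_{G_0}}\spec\cO,\]
and $\chi_\la$, $\chi_\la^w$, $\epsilon_\la^w$ are the pullbacks of $\chi_+$, $\chi_+^w$, $\epsilon_+^w$ along $f$. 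For $V_G^{\la,w}$ one uses that $V_{G_0}^w\subseteq V_{G_0}^{\mathrm{reg}}\subseteq V_{G_0}^0$, so $\alpha$ restricts to it, and then compares this pullback with the open subscheme defined in \S\ref{special-monoid-subsection}.

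Next, since formation of the quotient stack $[-/G]$ by the constant group $G$ commutes with arbitrary base change (the adjoint action of $G$ on $V_{G_0}$ being a morphism over $A_{G_0}$), I would obtain an isomorphism of $\fC_+^\la$-stacks
\[[V_G^{\la,w}/G]\;\cong\;[V_{G_0}^w/G]\times_{A_{G_0}}\spec\cO,\]
where the fiber product on the right is formed along $[V_{G_0}^w/G]\xrightarrow{[\chi_+^w]}\fC_+\to A_{G_0}$, and under which $[\chi_\la^w]$ is identified with the base change of $[\chi_+^w]$ (a morphism to $\fC_+\times_{A_{G_0}}\spec\cO=\fC_+^\la$) and the section of $[\chi_\la^w]$ induced by $\epsilon_\la^w$ is identified with the base change of the section of $[\chi_+^w]$ induced by $\epsilon_+^w$.

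Finally I would invoke \propref{BJ-action-on-V_G-prop}: the neutralization of the $\bB\cJ$-gerbe $[\chi_+^w]$ by $\epsilon_+^w$ is precisely an isomorphism of $\fC_+$-stacks $\bB_{\fC_+}\cJ\xrightarrow{\sim}[V_{G_0}^w/G]$ carrying the tautological section to the section induced by $\epsilon_+^w$. Pulling this isomorphism back along $\fC_+^\la\to\fC_+$ and using the identifications above yields an isomorphism $\bB_{\fC_+^\la}J^\la\xrightarrow{\sim}[V_G^{\la,w}/G]$ over $\fC_+^\la$ compatible with $[\chi_\la^w]$ and with the section $\epsilon_\la^w$, which is exactly the assertion that $[\chi_\la^w]$ is a $\bB J^\la$-gerbe neutralized by $\epsilon_\la^w$. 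There is no serious obstacle here: the only thing requiring (entirely routine) care is the bookkeeping of the base-change compatibilities in the first two steps and the stability of ``neutral gerbe banded by a given smooth group scheme'' under base change; no input beyond \S\ref{special-monoid-subsection}, \S\ref{C-plus-la-section} and \propref{BJ-action-on-V_G-prop} is needed, which is why the statement was only flagged as a consequence.
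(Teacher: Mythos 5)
Your proof is correct and follows exactly the route the paper intends: the paper offers no written argument beyond declaring the proposition a consequence of Proposition~\ref{BJ-action-on-V_G-prop}, and your base-change bookkeeping along $-w_0(\la):\spec\cO\to A_{G_0}$ is precisely the routine verification being left implicit.
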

\section{Generalized affine Springer fiber in the affine Grassmanian}\label{fiber-section}
Let $\lambda\in\Lambda_+$ be a dominant coweight. For any regular semisimple element $\ga\in G(F)^{\mathrm{rs}}$, we are interested in the following set
\[X_\ga^\la=\{g\in G(F)/G(\cO)| g^{-1}\ga g\in G(\cO)\varpi^\la G(\cO)\}\]
which we refer to as ``generalized affine Springer fiber". In this section we will first give criterions for nonemptiness of $X_\ga^\la$ and then equip $X_\ga^\la$ with the structure of ind-scheme and study its basic geometric properties.
\subsection{Nonemptiness}
We give two criterions for non-emptiness of the set $X_\ga^\la$.
\subsubsection{Newton Points}\label{Newton-point-section}
The first criterion uses the Newton point of $\ga$ which we now recall. For details, see \cite[\S4]{KoV}.\par 
Since $\ga\in G(F)^{\mathrm{rs}}$ is regular semisimple and the derived group $G_0$ is simply connected, the centralizer $G_\ga$ is a maximal torus defined over $F$. Choose an isomorphism $G_{\ga,\bar F}\cong T_{\bar F}$ over an algebraic closure $\bar F$. Then $\ga$ defines a $\bZ$-linear map
\[\xymatrix@R=1pt{
\mathrm{ev}_\ga: \check{\La}=X^*(T)\ar[r] & \bQ\\
\alpha\ar@{|->}[r] & \mathrm{val}(\alpha(\ga))
}\]
which can also be viewed as an element $\mathrm{ev}_\ga\in\Lambda_\bQ$. The $W$ orbit of $\mathrm{ev}_\ga$ is independant of the choice of the isomorphism $G_{\ga,\bar F}\cong T_{\bar F}$. We let $\nu_\ga\in\Lambda^+_\bQ$ be the unique element in the $W$-orbit of $\mathrm{ev}_\ga$ that lies in the dominant coweight cone and call it the \emph{Newton point of $\ga$}.\par 
In fact, $\nu_\ga$ actually lies in the subset
\[\La_{0,\bQ}^+\times X_*(G_{\mathrm{ab}})\subset\La_\bQ^+=\La_{0,\bQ}^+\times X_*(G_{\mathrm{ab}})_\bQ.\]
Using the Newton point, we get an alternative expression for the valuation of discriminant $d(\ga)$ (cf. Definition~\ref{d-ga-defn}) as follows:
\begin{lem}\label{d-ga-newton-lem}
Let $\ga\in G(F)^{\mathrm{rs}}$ and $\nu_\ga\in\La^+_\bQ$ its Newton point. After conjugation by $G(\bar F)$ we may assume that $\ga\in T(\bar F)^{\mathrm{rs}}$ and $\mathrm{val}(\alpha(\ga))\ge0$ for all positive root $\alpha$.  Then we have
\[d(\ga)=2\sum_{\alpha\in\Phi^+}\mathrm{val}(\alpha(\gamma)-1)-\langle2\rho,\nu_\ga\rangle.\]
\end{lem}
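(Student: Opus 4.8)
The plan is to relate the discriminant valuation $d(\ga)$, whose definition via \eqref{d-ga-eq} is $\sum_{\alpha\in\Phi}\mathrm{val}(1-\alpha(\ga))$, to the sum over positive roots only. After conjugating by $G(\bar F)$ I may assume $\ga\in T(\bar F)^{\mathrm{rs}}$ and, since $\nu_\ga$ is $W$-conjugate to $\mathrm{ev}_\ga$, I may further act by $W$ to arrange that $\mathrm{ev}_\ga$ itself is dominant, i.e. $\mathrm{val}(\alpha(\ga))\ge 0$ for every positive root $\alpha$; this is exactly the normalization in the statement, and it makes $\mathrm{ev}_\ga=\nu_\ga$. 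With this normalization, $d(\ga)=\sum_{\alpha\in\Phi^+}\mathrm{val}(1-\alpha(\ga))+\sum_{\alpha\in\Phi^+}\mathrm{val}(1-\alpha(\ga)^{-1})$, so the whole content is to compute $\mathrm{val}(1-\alpha(\ga)^{-1})$ for a positive root $\alpha$.

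The key computation is the following elementary valuation identity: for a positive root $\alpha$ with $\mathrm{val}(\alpha(\ga))=\langle\alpha,\nu_\ga\rangle=:n_\alpha\ge 0$, one has
\[
\mathrm{val}(1-\alpha(\ga)^{-1})=\mathrm{val}\!\left(\frac{\alpha(\ga)-1}{\alpha(\ga)}\right)=\mathrm{val}(\alpha(\ga)-1)-n_\alpha.
\]
Summing over $\alpha\in\Phi^+$ gives
\[
d(\ga)=2\sum_{\alpha\in\Phi^+}\mathrm{val}(\alpha(\ga)-1)-\sum_{\alpha\in\Phi^+}\langle\alpha,\nu_\ga\rangle
=2\sum_{\alpha\in\Phi^+}\mathrm{val}(\alpha(\ga)-1)-\langle 2\rho,\nu_\ga\rangle,
\]
using $\sum_{\alpha\in\Phi^+}\alpha=2\rho$. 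This is exactly the claimed formula.

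The only subtlety — and the one point that needs care rather than being routine — is the boundary case where $n_\alpha=0$ but $\alpha(\ga)$ is a unit congruent to $1$, so that both $\mathrm{val}(1-\alpha(\ga))$ and $\mathrm{val}(1-\alpha(\ga)^{-1})$ are positive; here the identity $\mathrm{val}(1-\alpha(\ga)^{-1})=\mathrm{val}(\alpha(\ga)-1)-n_\alpha$ still holds trivially since $n_\alpha=0$ and $\mathrm{val}(1-\alpha(\ga)^{-1})=\mathrm{val}(\alpha(\ga)-1)$, so no extra contribution is lost or gained. One should also check that the displayed identity is consistent with the fact that both sides of the final formula are visibly stable under replacing $\ga$ by a $G(\bar F)$-conjugate lying in $T(\bar F)$ in the prescribed chamber, which is clear since $d(\ga)$, $\nu_\ga$, and the collection $\{\mathrm{val}(\alpha(\ga)-1)\}_{\alpha\in\Phi^+}$ all depend only on the $W$-orbit of $\ga$ in $T(\bar F)$ after the normalization. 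I expect no genuine obstacle; the proof is a short manipulation once the normalization $\mathrm{ev}_\ga=\nu_\ga$ is in place.
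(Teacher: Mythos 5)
Your proof is correct and is essentially the same computation as the paper's: both start from \eqref{d-ga-eq} and rearrange the sum over $\Phi$ using the normalization $\mathrm{ev}_\ga=\nu_\ga$. The only (cosmetic) difference is that you pair each $\alpha\in\Phi^+$ with $-\alpha$ via the uniform identity $\mathrm{val}(1-\alpha(\ga)^{-1})=\mathrm{val}(\alpha(\ga)-1)-\langle\alpha,\nu_\ga\rangle$, whereas the paper splits $\Phi$ according to whether $\langle\alpha,\nu_\ga\rangle=0$ or $<0$; the two bookkeepings are interchangeable.
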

\begin{proof}
In \eqref{d-ga-eq}, separate the sum over $\Phi$ according to whether $\langle\alpha,\nu_\ga\rangle=0$ or not, then we get
\begin{equation}\label{d-ga-newton-eq}
d(\gamma)=\sum_{\substack{\alpha\in\Phi\\ \langle\alpha,\nu_\ga\rangle=0}}\mathrm{val}(\alpha(\gamma)-1)+
\sum_{\substack{\alpha\in\Phi\\ \langle\alpha,\nu_\ga\rangle<0}}\langle\alpha,\nu_\ga\rangle.
\end{equation}

By our assumption that $\mathrm{val}(\alpha(\ga))\ge0$ for $\alpha\in\Phi^+$, the first term in \eqref{d-ga-newton-eq} equals to 
\[2\sum_{\substack{\alpha\in\Phi^+\\ \langle\alpha,\nu_\ga\rangle=0}}\mathrm{val}(\alpha(\gamma)-1)=2\sum_{\alpha\in\Phi^+}\mathrm{val}(\alpha(\gamma)-1)\]
while the second term of \eqref{d-ga-newton-eq} equals to
\[\sum_{\alpha\in\Phi^-}\langle\alpha,\nu_\ga\rangle=-\sum_{\alpha\in\Phi^+}\langle\alpha,\nu_\ga\rangle=-\langle2\rho,\nu_\ga\rangle.\]
Hence the lemma follows.
\end{proof}

\subsubsection{The element $\ga_\la$}
Recall that the homomorphism $\theta:G\to G_0^+$ in \eqref{G-ab-Cartesian-square-eq} induces a homomorphism $\theta_*:X_*(T)\to X_*(T_0^+)$. Using the description \eqref{T-plus-coweight-lattice-eq}, we can write
\[\theta_*(\la)=(\la_0,\bar\la)\]
where $\bar\la$ is the image of $\la$ in $X_*(T_0^{\mathrm{ad}})$ and $\la_0\in X_*(T_0^{\mathrm{ad}})$ satisfies $\la_0+\bar\la\in X_*(T_0)$. Then we have $\theta_*(w_0(\la))=(\la_0,w_0(\bar\la))$ and in particular $\la_0+w_0(\bar\la)\in X_*(T_0)$. \par 
Consider the element
\begin{equation}\label{gamma-lambda-eq}
\ga_\la:=\varpi^{-\la_0-w_0(\bar\la)}\theta(\ga)\in G_0^+(F)
\end{equation}
where we view $\varpi^{-\la_0-w_0(\bar\la)}\in T_0(F)=Z^+(F)$ as an element in the center of $G_0^+(F)$.
\subsubsection{}
Suppose that $X_\ga^\la$ is nonempty. Take $g\in X_\ga^\la$, then $g^{-1}\ga g\in G(\cO)\varpi^\la G(\cO)$ and in particular
$\det(\ga)\in\det(\varpi^\la)G_{\mathrm{ab}}(\cO)$. Notice that multiplying $\ga$ by an element in $Z(\cO)$ does not change $X_\ga^\la$. Hence we may assume that $\det(\ga)=\det(\varpi^\la)$.

\begin{lem}\label{gamma-lambda-lem}
Under the assumption $\det(\ga)=\det(\varpi^\la)$, the element $\ga_\la$ belongs to $V_G^\la(F)$. In other words, 
\[\alpha(\ga_\la)=\varpi^{-w_0(\bar\la)}=(\varpi^{\langle-w_0(\la),\alpha_i\rangle})_{1\le i\le r}\in A_{G_0}(F).\]
\end{lem}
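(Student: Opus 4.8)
The plan is to compute $\alpha(\ga_\la)$ directly, exploiting that the restriction of $\alpha$ to the unit group $G_0^+$ of $V_{G_0}$ is the group homomorphism $G_0^+\to G_0^+/G_0=T_0^{\mathrm{ad}}$ onto the unit group of $A_{G_0}$ (recall $G_0^+=(T_0\times G_0)/Z_0$ has derived group $G_0$). By the integrality relation $\la_0+w_0(\bar\la)\in X_*(T_0)$ recorded just before \eqref{gamma-lambda-eq}, the twisting factor $\varpi^{-\la_0-w_0(\bar\la)}$ lies in the center of $G_0^+(F)$, so multiplicativity of $\alpha$ on $G_0^+(F)$ gives
\[\alpha(\ga_\la)=\alpha\bigl(\varpi^{-\la_0-w_0(\bar\la)}\bigr)\cdot\alpha(\theta(\ga))\in T_0^{\mathrm{ad}}(F).\]

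First I would evaluate the two factors separately. For the first, the composite $T_0\cong Z_0^+\hookrightarrow G_0^+\xrightarrow{\ \alpha\ }T_0^{\mathrm{ad}}$ is the canonical isogeny $T_0\to T_0^{\mathrm{ad}}$, which on cocharacter lattices is the inclusion $X_*(T_0)\hookrightarrow X_*(T_0^{\mathrm{ad}})$; hence $\alpha(\varpi^{-\la_0-w_0(\bar\la)})=\varpi^{-\la_0-w_0(\bar\la)}$, with the exponent now read in $X_*(T_0^{\mathrm{ad}})$. For the second, the Cartesian square \eqref{G-ab-Cartesian-square-eq} gives $\alpha(\theta(\ga))=\theta_{\mathrm{ab}}(\det\ga)$, which under the standing hypothesis $\det\ga=\det(\varpi^\la)$ equals $\theta_{\mathrm{ab}}(\det(\varpi^\la))$. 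Chasing cocharacters around the same square one has $(\theta_{\mathrm{ab}})_*\circ\det_*=\alpha_*\circ\theta_*$; since $\theta_*(\la)=(\la_0,\bar\la)$ while $\alpha_*\colon X_*(T_0^+)\to X_*(T_0^{\mathrm{ad}})$ is projection onto the first coordinate in the description \eqref{T-plus-coweight-lattice-eq} (as one reads off from $\alpha\colon[(t_1,t_2)]\mapsto t_1\bmod Z_0$ on $T_0^+$, the enhancement factor), it follows that $(\theta_{\mathrm{ab}})_*(\det_*\la)=\la_0$, so $\alpha(\theta(\ga))=\varpi^{\la_0}$.

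Multiplying, $\alpha(\ga_\la)=\varpi^{-\la_0-w_0(\bar\la)}\cdot\varpi^{\la_0}=\varpi^{-w_0(\bar\la)}$ in $T_0^{\mathrm{ad}}(F)$. To conclude I would identify this element with the prescribed point of $A_{G_0}(F)\cong F^r$ used both in the statement and in the definition of $V_G^\la$: under the identification $A_{G_0}\cong\bA^r$ with coordinate functions the (extended) simple roots, the $i$-th coordinate of $\varpi^{-w_0(\bar\la)}$ is $\varpi^{\langle\alpha_i,-w_0(\bar\la)\rangle}=\varpi^{\langle-w_0(\la),\alpha_i\rangle}$, using that $\alpha_i$ factors through $T_0^{\mathrm{ad}}$ and so does not distinguish $\la$ from its image $\bar\la$. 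This is precisely the element cutting out the bottom arrow of the Cartesian square defining $V_G^\la$, so by the description of $V_G^\la(F)$ recalled in \S\ref{la-plus-section} we get $\ga_\la\in V_G^\la(F)$.

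I do not anticipate any real obstacle: the whole argument is formal bookkeeping with the lattice inclusions $X_*(T_0)\subset X_*(T_0^{\mathrm{ad}})$, the model \eqref{T-plus-coweight-lattice-eq} of $X_*(T_0^+)$, and the maps $\theta_*$, $\alpha_*$, $\det_*$. The only points deserving care are checking that $\alpha_*$ reads off the first coordinate of $X_*(T_0^+)$ — immediate from the anti-diagonal description of $G_0^+$ and of the section $s$ — and that $\varpi^{-\la_0-w_0(\bar\la)}$ genuinely lies in the center of $G_0^+$, which is exactly the integrality statement $\la_0+w_0(\bar\la)\in X_*(T_0)$ quoted above.
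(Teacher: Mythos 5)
Your proposal is correct and follows essentially the same route as the paper: both evaluate $\alpha(\ga_\la)$ by multiplicativity, use the Cartesian square \eqref{G-ab-Cartesian-square-eq} together with $\det(\ga)=\det(\varpi^\la)$ to get $\alpha(\theta(\ga))=\alpha(\theta(\varpi^\la))=\varpi^{\la_0}$, and cancel against the central twist $\varpi^{-\la_0-w_0(\bar\la)}$. The only difference is that you spell out the cocharacter bookkeeping (that $\alpha_*$ is projection onto the first coordinate of $X_*(T_0^+)$) which the paper compresses into ``by the definition of the abelianization map.''
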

\begin{proof}
By the definition of the abelianization map $\alpha$, we have
\[\alpha(\theta(\varpi^\la))=\varpi^{\la_0}\]
By the commutative diagram \eqref{G-ab-Cartesian-square-eq} and the assumption that $\det(\ga)=\det(\varpi^\la)$, we get
\[\alpha(\theta(\ga))=\theta_{\mathrm{ab}}(\det(\ga))=\theta_{\mathrm{ab}}(\det(\varpi^\la))=\alpha(\theta(\varpi^\la))\] 
Therefore
\[\alpha(\ga_\la)=\varpi^{-\la_0-w_0(\bar\la)}\alpha(\theta(\ga))=\varpi^{-w_0(\bar\la)}.\]
\end{proof}
The following Proposition provides an alternative description of the set $X_\ga^\la$:

\begin{prop}\label{describe-X-ga-by-V-G-prop}
Assume that $\det(\ga)=\det(\varpi^\la)$, so in particular $\ga_\la\in V_G^\la(F)$ by \ref{gamma-lambda-lem}. Then 
\[X_\ga^\la=\{g\in G(F)/G(\cO)| \mathrm{Ad}(g)^{-1}(\ga_\la)\in V_G^{\la,0}(\cO)\}.\]
\end{prop}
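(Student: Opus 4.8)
The plan is to unwind both sides and reduce the statement to a pointwise computation inside the loop group $LG_0^+$, using Lemma~\ref{V-lambda-arc-space-lem} as the key combinatorial input. First I would observe that the condition defining $X_\ga^\la$, namely $g^{-1}\ga g\in G(\cO)\varpi^\la G(\cO)$, can be transported along the homomorphism $\theta:G\to G_0^+$ from \eqref{G-ab-Cartesian-square-eq}. Since $G(\cO)$ maps into $G_0^+(\cO)$ and $\theta$ intertwines the conjugation actions, the left-hand side is contained in the right-hand side once one checks that $\theta(G(\cO)\varpi^\la G(\cO))$ lands (modulo the central twist by $\varpi^{-\la_0-w_0(\bar\la)}$) inside $L^+G_0\varpi^{\la^+}L^+G_0 = L^+V_G^{\la,0}$. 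Here the central element $\varpi^{-\la_0-w_0(\bar\la)}\in T_0(F)=Z^+(F)$ is exactly what converts $\theta(\varpi^\la)$, whose image under $\alpha$ is $\varpi^{\la_0}$, into an element of $V_G^\la(\cO)$ with $\alpha$-value $\varpi^{-w_0(\bar\la)}$; this is precisely the content of the computation in Lemma~\ref{gamma-lambda-lem}.

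Concretely, I would argue as follows. Given $g\in G(F)/G(\cO)$ with $\mathrm{Ad}(g)^{-1}\ga\in G(\cO)\varpi^\la G(\cO)$, apply $\theta$ and multiply by the central element $\varpi^{-\la_0-w_0(\bar\la)}$: one gets
\[
\mathrm{Ad}(\theta(g))^{-1}(\ga_\la)=\varpi^{-\la_0-w_0(\bar\la)}\,\theta\big(\mathrm{Ad}(g)^{-1}\ga\big)\in \varpi^{-\la_0-w_0(\bar\la)}\,\theta\big(G(\cO)\varpi^\la G(\cO)\big).
\]
Since $\theta(\varpi^\la)\in T_0^+(F)$ has Cartan type $\theta_*(\la)=(\la_0,\bar\la)$, and since $\varpi^{\la^+}=s(\varpi^{-w_0(\bar\la)})$ differs from $\varpi^{\theta_*(w_0(\la))}=\varpi^{(\la_0,w_0(\bar\la))}$ by the central factor $\varpi^{(-\la_0-w_0(\bar\la),\,0)}$, the right-hand side equals $L^+G_0\,\varpi^{\la^+}\,L^+G_0$, which by Lemma~\ref{V-lambda-arc-space-lem} is exactly $L^+V_G^{\la,0}$. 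Because the square \eqref{G-ab-Cartesian-square-eq} is Cartesian and $\det(\ga)=\det(\varpi^\la)$, one recovers $g$ itself (not merely $\theta(g)$) from $\theta(g)$ together with the bounded coset, so the two descriptions of the coset space $G(F)/G(\cO)$ match up; thus membership in $X_\ga^\la$ is equivalent to $\mathrm{Ad}(g)^{-1}(\ga_\la)\in V_G^{\la,0}(\cO)$. For the reverse inclusion I would run the same chain of equalities backwards, using that $\theta$ has a section on cocharacter lattices after inverting nothing essential (the Cartesian square lets one descend the condition from $G_0^+$ back to $G$).

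The step I expect to be the main obstacle is the careful bookkeeping of the central twist and the identification $\varpi^{-\la_0-w_0(\bar\la)}\theta(G(\cO)\varpi^\la G(\cO)) = L^+G_0\varpi^{\la^+}L^+G_0$ at the level of $\cO$-points rather than $F$-points: one must check not just that the double coset is the right $W$-orbit but that the \emph{integrality} (arc space vs. loop space) is preserved, i.e. that $\mathrm{Ad}(\theta(g))^{-1}(\ga_\la)$ actually lies in $V_G^{\la,0}(\cO)$ and not merely in $V_G^{\la,0}(F)$. This is where Lemma~\ref{V-lambda-arc-space-lem} does the real work, translating the matrix-coefficient valuation conditions defining $V_{G_0}\subset \bA^r\times\prod\mathrm{End}(V_{\omega_i^0})$ into the Cartan-order condition $\mu^+\le\la^+$; once that dictionary is in place, everything else is formal manipulation of the Cartesian diagram and the observation that conjugation by $G(\cO)$ on both sides is compatible with $\theta$.
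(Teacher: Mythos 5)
Your proposal is correct and follows essentially the same route as the paper: reduce to a double-coset statement in $G_0^+(F)$, observe that the central twist by $\varpi^{-\la_0-w_0(\bar\la)}$ moves $\varpi^{\theta_*(\la)}$ into the $W$-orbit of $\varpi^{\la^+}$, and invoke Lemma~\ref{V-lambda-arc-space-lem} to identify $L^+G_0\varpi^{\la^+}L^+G_0$ with $L^+V_G^{\la,0}$. The only cosmetic difference is that the paper first replaces $G(\cO)\varpi^\la G(\cO)$ by $G_0(\cO)\varpi^\la G_0(\cO)$ using $\det(\ga)=\det(\varpi^\la)$, which makes the bookkeeping you worry about in your last paragraph immediate.
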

\begin{proof}
For any $g\in G(F)/G(\cO)$, we have $g^{-1}\ga g\in G(\cO)\varpi^\la G(\cO)$ if and only if $g^{-1}\ga g\in G_0(\cO)\varpi^\la G_0(\cO)$ since $\det(\ga)=\det(\varpi^\la)$ by assumption. Note that $\theta(G_0(\cO)\varpi^\la G_0(\cO))$ is the $G_0(\cO)\times G_0(\cO)$ orbit of $\theta(\varpi^\la)=\varpi^{\theta(\la)}\in T_0^+(F)$.\par 
Since $\theta(\la)=(\la_0,\bar\la)$, we have
\[\theta(\la)+(-\la_0-w_0(\bar\la),0)=(-w_0(\bar\la),\bar\la)=w_0(\la^+)\in X_*(T_0^+)\]
where $\la^+$ is defined in \ref{la-plus-section}. Consequently
$g\in X_\ga^\la$ if and only if 
\[\mathrm{Ad}(g)^{-1}(\ga_\la)\in G_0(\cO)\varpi^{\la^+}G_0(\cO)=V_G^{\la,0}(\cO)\]
where the equality follows from \ref{V-lambda-arc-space-lem}.
\end{proof}

\subsubsection{}\label{C-le-la-section}
For each $\la\in\La^+$, we define the set
\[\fC_{\le\la}:=\chi(G(\cO)\varpi^\la G(\cO))\subset\fC(F)\]
where $\chi:G\to\fC$ is the Steinberg quotient. Under the isomorphism \eqref{Steinberg-base-eq} we have an identification
\[\fC_{\le\la}=\fC_+^\la(\cO)\times\det(\varpi^\la)G_{\mathrm{ab}}(\cO)\]
where $\fC_+^\la$ is defined in ~\ref{C-plus-la-section}.\par 
Now we can state the nonemptiness criterions.
\begin{prop}\label{nonempty-prop}
The following are equivalent:
\begin{enumerate}
\item $X_\ga^\lambda$ is nonempty;
\item $\nu_\ga\le_\bQ\la$, i.e. $\la-\nu_\ga$ is a $\bQ$-linear combinition of simple coroots with non-negative coefficients;
\item $\chi(\ga)\in\fC_{\le\la}$.
\end{enumerate}
\end{prop}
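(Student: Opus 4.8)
The plan is to prove the chain of implications $(1)\Rightarrow(3)\Rightarrow(2)\Rightarrow(1)$, using Proposition~\ref{describe-X-ga-by-V-G-prop} to translate the statement into the language of the Vinberg monoid $V_G^\la$ and the extended Steinberg base $\fC_+^\la$. Throughout I may assume, after multiplying $\ga$ by a suitable element of $Z(\cO)$ (which changes neither $X_\ga^\la$ nor $\nu_\ga$), that $\det(\ga)=\det(\varpi^\la)$; if no such translate exists then $X_\ga^\la=\varnothing$, and one checks directly that condition (3) also fails (since $\fC_{\le\la}$ has prescribed image $\det(\varpi^\la)G_{\mathrm{ab}}(\cO)$ under $\det$) and that condition (2) forces $\la-\nu_\ga$ to lie in the coroot lattice tensored with $\bQ$, whose image in $X_*(G_{\mathrm{ab}})_\bQ$ is zero, so $\nu_\ga$ and $\la$ have the same image in $X_*(G_{\mathrm{ab}})$, contradicting the failure of the determinant condition. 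So we reduce to the case $\det(\ga)=\det(\varpi^\la)$.

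For $(1)\Rightarrow(3)$: if $g\in X_\ga^\la$ then $g^{-1}\ga g\in G(\cO)\varpi^\la G(\cO)$, so $\chi(\ga)=\chi(g^{-1}\ga g)\in\chi(G(\cO)\varpi^\la G(\cO))=\fC_{\le\la}$ by definition of $\fC_{\le\la}$ in \S\ref{C-le-la-section}. For $(3)\Rightarrow(2)$: using $\chi(\ga)\in\fC_{\le\la}=\fC_+^\la(\cO)\times\det(\varpi^\la)G_{\mathrm{ab}}(\cO)$, I look at the $\bA^r$-coordinates, i.e. the traces $\mathrm{Tr}(\rho_i^+\circ\theta)(\ga)$, which by \eqref{Steinberg-base-eq} land in $\varpi^{\langle-w_0(\la),\alpha_i\rangle}\cO$; after conjugating $\ga$ over $\bar F$ into $T(\bar F)$ with Newton point $\nu_\ga$, the valuation of the leading term of $\mathrm{Tr}(\rho_i^+\circ\theta)(\ga)$ equals $\langle w_0(\omega_i),\nu_\ga\rangle = -\langle\omega_i,\nu_\ga\rangle$ (the lowest weight of $V(\omega_i)$ paired with the dominant $\nu_\ga$), so $\langle\omega_i,\nu_\ga\rangle\ge\langle\omega_i,-w_0(\la)\rangle$ for each $i$; but $\{-w_0(\omega_i)\}$ are the fundamental weights and $-w_0$ preserves dominance, so this is exactly the assertion that $-w_0(\la)-\nu_{-w_0(\ga)}$, equivalently $\la-\nu_\ga$, is a non-negative $\bQ$-combination of simple coroots, which is $(2)$. (This is essentially the computation already carried out inside the proof of \lemref{V-lambda-arc-space-lem}.)

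The main work is $(2)\Rightarrow(1)$, i.e. producing a point of $X_\ga^\la$ from the inequality $\nu_\ga\le_\bQ\la$. By \propref{describe-X-ga-by-V-G-prop} it suffices to exhibit $g\in G(F)$ with $\mathrm{Ad}(g)^{-1}(\ga_\la)\in V_G^{\la,0}(\cO)$, and by \lemref{V-lambda-arc-space-lem} the right-hand side is $L^+G_0\,\varpi^{\la^+}L^+G_0$; so I must move $\ga_\la$, a regular semisimple element of $V_G^\la(F)$ lying over the point $\varpi^{-w_0(\bar\la)}\in A_{G_0}(F)$, into the open cell. The natural strategy is the usual affine-Springer-fiber argument: conjugate $\ga$ over $\bar F$ into $T(\bar F)$ so that its valuation vector is the Newton point $\nu_\ga$, reduce (by a Galois-descent / $Z^1$ argument, using that $G_0$ is simply connected so $H^1(F,T)$ behaves well and one may choose a rational Borel adapted to $\nu_\ga$) to producing the point over $F$ rather than $\bar F$, and then use the inequality $\nu_\ga\le_\bQ\la$ together with the Iwasawa decomposition relative to a Borel $B$ with $\langle\alpha,\nu_\ga\rangle\ge 0$ for $\alpha\in\Phi^+$ to write $\ga_\la$ in the big cell: concretely, the element $\varpi^{\la^+}$ and $\theta(\ga_\la)$ differ, up to the $T_0^+(\cO)$-ambiguity and conjugation by the unipotent radical, by a change of cocharacter bounded by $\la^+$ in the dominance order, which is exactly what $\nu_\ga\le_\bQ\la$ guarantees. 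The hard part will be controlling this Iwasawa/unipotent adjustment integrally — making sure the conjugating element can be taken in $G(F)$ and that the resulting element genuinely lands in $V_G^{\la,0}(\cO)=L^+G_0\varpi^{\la^+}L^+G_0$ and not merely in the larger $V_G^\la(\cO)$; this is where one must use regularity of $\ga$ and the explicit structure of the Vinberg monoid (the section $s$, and the fact that $V_{G_0}^0$ is a single $G_0\times G_0$-orbit over each fiber of $\alpha$) rather than soft arguments. I expect this step to be the technical crux, and it is plausible that one instead proves $(2)\Rightarrow(3)$ directly (building $\ga'\in G(\cO)\varpi^\la G(\cO)$ stably conjugate to $\ga$ by a Kottwitz-style existence statement for elements with prescribed characteristic polynomial, cf.\ \cite[\S4]{KoV}) and then invokes $(3)\Rightarrow(1)$ via a rational-conjugacy argument; I would pursue whichever of these two routes for $(2)\Rightarrow(1)$ is cleaner given the monoid machinery already set up.
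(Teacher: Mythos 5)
Your chain $(1)\Rightarrow(3)\Rightarrow(2)\Rightarrow(1)$ has two genuine gaps, one in each of the nontrivial links. For $(3)\Rightarrow(2)$ you assert that, after conjugating $\ga$ into $T(\bar F)$, the valuation of $\mathrm{Tr}(\rho_i^+\circ\theta)(\ga)$ \emph{equals} $\langle w_0(\omega_i),\nu_\ga\rangle$. This is false in general: the trace is a sum of eigenvalues $\chi(\ga)$ over the weights $\chi$ of $V(\omega_i)$, and when $\nu_\ga$ is singular several weights attain the minimal pairing $\langle w_0\omega_i,\nu_\ga\rangle$, so their leading coefficients can cancel and the trace valuation can be strictly larger (already for $\mathrm{SL}_2$ with $\ga=\mathrm{diag}(u,u^{-1})$, $u=i+\varpi$, one has $\nu_\ga=0$ but $\mathrm{val}(\mathrm{Tr}\,\ga)=1$). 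You therefore only know $\mathrm{val}(\mathrm{Tr}\rho_i(\ga))\ge\langle w_0\omega_i,\nu_\ga\rangle$ and $\mathrm{val}(\mathrm{Tr}\rho_i(\ga))\ge\langle w_0\omega_i,\la\rangle$, from which the desired inequality $\langle w_0\omega_i,\nu_\ga\rangle\ge\langle w_0\omega_i,\la\rangle$ does not follow. This is not a cosmetic issue: the implication $(3)\Rightarrow(2)$ (equivalently $(1)\Rightarrow(2)$) is a Mazur-type inequality, and the paper does not attempt a direct valuation argument for it --- it cites \cite[Corollary 3.6]{KoV} for $(1)\Rightarrow(2)$ and obtains $(3)\Rightarrow(2)$ only by composing with $(3)\Rightarrow(1)$.

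For $(2)\Rightarrow(1)$ you explicitly defer the ``technical crux'' (the integral Iwasawa/unipotent adjustment) and offer it only as a plan, so this link is not established either. The paper's route here is short and you should adopt it: first $(2)\Rightarrow(3)$ by base change to a splitting field $F'$ of degree $e$, where $e\nu_\ga$ is integral, $\ga$ is $G(F')$-conjugate into $(\varpi')^{e\nu_\ga}T(\cO')$, and hence $\chi_\la(\ga_\la)\in\fC_+^\la(\cO')\cap\fC_+^\la(F)=\fC_+^\la(\cO)$; then $(3)\Rightarrow(1)$ by evaluating the extended Steinberg section at $a=\chi_\la(\ga_\la)\in\fC_+^\la(\cO)$, which produces $\epsilon_\la^w(a)\in V_G^{\la,0}(\cO)$ with the same characteristic polynomial as $\ga_\la$, and then invoking rational conjugacy of regular semisimple elements with equal image in the Steinberg quotient (available since $G_0$ is simply connected and $F=k((\varpi))$ has cohomological dimension one). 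This is exactly the ``alternative route'' you mention at the end of your last paragraph; it is the one that works, and it avoids the integral Iwasawa analysis entirely. Your $(1)\Rightarrow(3)$ step is correct and agrees with the trivial direction implicit in the paper.
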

\begin{proof}
(1)$\Rightarrow$(2): This is \cite[Corollary 3.6]{KoV}.\par 

(2)$\Rightarrow$(3): By condition (2) we have $\det(\ga)\in\det(\varpi^\la) G_{\mathrm{ab}}(\cO)$. We may multiply $\ga$ by an element in $Z(\cO)$ and assume that $\det(\ga)=\det(\varpi^\la)$. By Lemma~\ref{gamma-lambda-lem}, we have $\ga_\la\in V_G^\la(F)$.\par
Let $F'/F$ be a finite extension of degree $e$ so that $\ga$ is split in $G(F')$.  Let $\varpi'=\varpi^{\frac{1}{e}}$ be a uniformizer of $F'$ and $\cO'=k[[\varpi']]\subset F'$ be the ring of integers. Then $e\cdot\nu_\ga\in\Lambda_+$ and $\ga$ is $G(F')$-conjugate to an element in $(\varpi')^{e\cdot\nu_\ga}T(\cO')$. Moreover, condition (2) implies that $e(\la-\nu_\ga)$ is an integral linear combination of simple coroots with non-negative coefficients and hence $\ga_\la$ is $G(F')$-conjugate to an element in $V_G^\la(\cO')$. Hence
\[\chi_\la(\ga_\la)\in \fC_+^\la(\cO')\cap\fC_+^\la(F)=\fC_+^\la(\cO)\]
which implies that
\[\chi(\ga)\in\fC_{\le\la}=\fC_+^\la(\cO)\times\det(\varpi^\la)G_{\mathrm{ab}}(\cO).\]  
(3)$\Rightarrow$(1): Since $\chi(\ga)\in\fC_{\le\la}$, in particular we have $\det(\ga)\in\det(\varpi^\la)G_{\mathrm{ab}}(\cO)$. After multiplying $\ga$ by an element in $Z(\cO)$ we may assume that 
$\det(\ga)=\det(\varpi^\la)$ and obtain the element $\ga_\la\in V_G^\la(F)$ as in Lemma~\ref{gamma-lambda-lem}.\par
Let $a=\chi_\la(\ga_\la)$. Then $a\in\fC_+^\la(\cO)$ by condition (3). Then for any $w\in\mathrm{Cox}(W,S)$ we have $\epsilon_\la^w(a)\in V_G^{\la,0}(\cO)$. Take $h\in G(F)$ such that $\mathrm{Ad}(h)^{-1}(\ga_\la)=\epsilon_\la^w(a)$. We see that $h\in X_\ga^\la$ by \ref{describe-X-ga-by-V-G-prop}. In particular, $X_\ga^\la$ is nonempty.
\end{proof}

\subsection{Ind-scheme structure}
We will equip the set $X_\ga^\la$ with an ind-scheme structure. We present two approaches, one based on the original definition, the other based on the description of $X_\ga^\la$ via the monoid $V_G^\la$ as in \ref{describe-X-ga-by-V-G-prop}. 
\subsubsection{}
Let $\mathrm{Gr}_G:=LG/L^+G$ be the affine Grassmanian for $G$, which is known to be an ind-projective ind-scheme over $k$. The positive loop group $L^+G$ acts by left multiplication on $\mathrm{Gr}_G$. Let $(LG)_\la:= L^+G\varpi^\la L^+G$ be the $k$-scheme whose set of $k$-points is the double coset $G(\cO)\varpi^\la G(\cO)$. 
\begin{defn}
The generalized affine Springer fiber $X_\ga^\la$ is the $k$-functor that associates to any $k$-algebra $R$ the set
\[X_\ga^\la(R)=\{g\in\mathrm{Gr}_G(R)| g^{-1}\ga g\in (LG)_\la(R)\}\]
\end{defn}
Then $X_\ga^\la$ is a locally closed sub-indscheme of $\mathrm{Gr}_G$.\par 
The following alternative moduli interpretation of $X_\ga^\la$ is a direct consequence of \ref{describe-X-ga-by-V-G-prop}.
\begin{lem}\label{second-def-lem}
For any $k$-algebra $R$, the set $X_\ga^\la(R)$ is equivalent to the discrete groupoid classifying pairs $(h,\iota)$ where $h$ is the horizontal arrow in the following commutative diagram
\[\xymatrix{
\spec R[[\varpi]] \ar[r]^h\ar[rd]_a & [V_G^{\la,0}/G]\ar[d]\\
 & \fC_+^\la
}\] 
and $\iota$ is an isomorphism between the restriction of $h$ to $\spec R((\varpi))$ and the composition
\[\spec R((\varpi))\xrightarrow{\ga} V_G^\la\to [V_G^\la/G].\]
\end{lem}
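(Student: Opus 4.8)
The plan is to unwind both sides of the asserted equivalence through the Beauville--Laszlo description of the affine Grassmannian together with \propref{describe-X-ga-by-V-G-prop}. As a preliminary step I would note that forming $\ga_\la$ already presupposes $\det(\ga)=\det(\varpi^\la)$ (otherwise $X_\ga^\la$ is empty and the diagram in the statement carries no meaning), and that multiplying $\ga$ by an element of $Z(\cO)$ merely translates $\ga_\la$ by a central element of $Z^+(\cO)\subset G_0^+$, changing neither $X_\ga^\la$ nor, up to the corresponding translation of the section in $h$, the groupoid of pairs $(h,\iota)$; so we may assume $\det(\ga)=\det(\varpi^\la)$, whence $\ga_\la\in V_G^{\la,0}(F)$ by \lemref{gamma-lambda-lem} (using $\theta(\ga)\in G_0^+\subset V_{G_0}^0$). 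Recall next that $\mathrm{Gr}_G(R)$ is canonically the set of pairs $(\cE,\beta)$ with $\cE$ a $G$-torsor on $\spec R[[\varpi]]$ and $\beta$ a trivialization of $\cE|_{\spec R((\varpi))}$, and that, by the very definition of a quotient stack, a morphism $\spec R[[\varpi]]\to[V_G^{\la,0}/G]$ is a $G$-torsor $\cE$ on $\spec R[[\varpi]]$ equipped with a section $s$ of the associated bundle $\cE\times^G V_G^{\la,0}$, a morphism in $[V_G^\la/G]$ being an isomorphism of $G$-torsors carrying one section to the other.

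With this dictionary the two constructions are straightforward. Given $g\in X_\ga^\la(R)\subset\mathrm{Gr}_G(R)$, let $(\cE_g,\beta_g)$ be the corresponding torsor-with-trivialization; the point $\ga_\la\in V_G^\la(F)$, read through $\beta_g$, gives a section of $\cE_g\times^G V_G^\la$ over $\spec R((\varpi))$, while $\beta_g$ itself is the isomorphism $\iota$ with $\spec R((\varpi))\xrightarrow{\ga_\la}[V_G^\la/G]$ (the map written $\ga$ in the statement). By \propref{describe-X-ga-by-V-G-prop}, applied with $R$-coefficients, one has $g\in X_\ga^\la(R)$ precisely when $\mathrm{Ad}(g)^{-1}(\ga_\la)$ lies in $V_G^{\la,0}(R[[\varpi]])$ for a local representative $g$; after an fppf base change $R\to R'$ trivializing $(\cE_g,\beta_g)$ this says exactly that the above section extends over $\spec R'[[\varpi]]$ into $\cE_g\times^G V_G^{\la,0}$, and since $V_G^{\la,0}\hookrightarrow V_G^\la$ is open this extension property is fppf-local, so it holds already over $\spec R[[\varpi]]$, producing $h$. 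Conversely, given $(h,\iota)$ with $h=(\cE,s)$, the isomorphism $\iota$ is by definition a trivialization $\beta$ of $\cE|_{\spec R((\varpi))}$ under which $s|_{\spec R((\varpi))}$ becomes $\ga_\la$; thus $(\cE,\beta)\in\mathrm{Gr}_G(R)$, and the same chain of equivalences run backwards shows it lies in $X_\ga^\la(R)$. The two maps are visibly inverse to each other.

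Finally I would check that the groupoid of pairs $(h,\iota)$ is discrete: an automorphism of $(h,\iota)$ is an automorphism $\phi$ of the $G$-torsor underlying $h$ that fixes $s$ and equals the identity after $\iota$, hence $\phi|_{\spec R((\varpi))}=\mathrm{id}$; since $\phi$ is a section over $\spec R[[\varpi]]$ of a separated affine group scheme (a closed subgroup of $\mathrm{Aut}(\cE)$) and $\spec R((\varpi))\hookrightarrow\spec R[[\varpi]]$ is a schematically dense open, $\phi=\mathrm{id}$. I expect the only genuine work, and hence the main obstacle, to be the bookkeeping invoked above: one must verify that \propref{describe-X-ga-by-V-G-prop} holds verbatim for $R$-points (its proof is functorial, but this should be said) and -- more substantively -- that the condition ``$\mathrm{Ad}(g)^{-1}(\ga_\la)\in V_G^{\la,0}(R[[\varpi]])$'', which a priori is phrased via an fppf-local representative $g$, descends to the intrinsic, representative-free statement that $s$ extends across $\varpi=0$ inside the \emph{open} substack $[V_G^{\la,0}/G]$. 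Everything else is a formal manipulation of the definitions.
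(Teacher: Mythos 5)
Your proposal is correct and takes essentially the same route as the paper, which simply declares the lemma to be a direct consequence of \propref{describe-X-ga-by-V-G-prop}: you have spelled out the torsor-with-trivialization description of $\mathrm{Gr}_G(R)$, the quotient-stack dictionary, the reduction to $\det(\ga)=\det(\varpi^\la)$, and the discreteness of the groupoid, all of which the paper leaves implicit. No gaps.
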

Replacing $V_G^{\la,0}$ by its open subschemes $V_G^{\la,\mathrm{reg}}$ (resp. $V_G^{\la,w}$) in the above diagram, we define the open subspaces $X_\ga^{\la,\mathrm{reg}}$ (resp. $X_\ga^{\la,w}$) of $X_\ga^\la$. By \eqref{open-cover-V-lambda-reg-eq}, we have an open cover
\begin{equation}\label{X-ga-reg-open-cover-eq}
X_\ga^{\la,\mathrm{reg}}=\bigcup_{w\in\mathrm{Cox}(W,S)}X_\ga^{\la,w}.
\end{equation}
\begin{rem}
The ind-scheme as defined above are non-reduced in general. However, we are only interested in their underlying reduced ind-scheme, which we use the same notations. 
\end{rem}
Later in \S\ref{finite-type-section} we will see that the ind-scheme $X_\ga^\la$ is actually a $k$-scheme locally of finite type under our assumption that $\ga$ is regular semisimple. 

\subsection{Symmetries}
Similar to the Lie algebra case, there are natural symmetries on the generalized affine Springer fiber $X_\ga^\la$. We recall the construction and emphasize the difference from the Lie algebra case.\par 
Assume $X_\ga^\la$ is nonempty and without loss of generality we also assume that $\det(\ga)=\det(\varpi^\la)$. Then by Proposition~\ref{nonempty-prop} we have
\[a=\chi_\la(\ga_\la)\in \fC_+^\la(F)^{\mathrm{rs}}\cap\fC_+^\la(\cO).\] 

\subsubsection{}
Let $J_a$ be the commutative group scheme over $\spec\cO$ obtained by pulling back $\cJ^\la$ along $a:\spec\cO\to\fC_+^\la$. Since $a\in\fC_+^\la(F)^{\mathrm{rs}}$ is generically regular semisimple, there is a canonical isomorphism 
$LJ_a\cong LG_\ga$ which allows us to identify the positive loop group $L^+J_a$ as a subgroup of $LG_\ga$. Consider the quotient group
\[\cP_a:=LJ_a/L^+J_a\cong LG_\ga/L^+J_a.\]
In other words, $\cP_a$ is the affine Grassmanian of $J_a$ classifying isomorphism classes of $J_a$-torsors on $\spec\cO$ with a trivialization of its restriction to $\spec F$. 

\subsubsection{}
The loop group $LG_\ga$ acts naturally on $X_\ga^\la$ and this action factors through $\cP_a$. Using the alternative modular interpretation of $X_\ga^\la$ in Lemma~\ref{second-def-lem}, the $\cP_a$ action is induced by the $\bB \cJ$ action on $[V_{G_0}^0/G]$ in Proposition~\ref{BJ-action-on-V_G-prop}. Moreover, the $\cP_a$ action on $X_\ga^\la$ induces an action on the open subspaces $X_\ga^{\lambda,\mathrm{reg}}$ and $X_\ga^{\la,w}$ for each $w\in\mathrm{Cox}(W,S)$.
\begin{prop}\label{X-ga-w-torsor-prop}
For each $w\in\mathrm{Cox}(W,S)$, $X_\ga^{\la,w}$ is a torsor under $\cP_a$.
\end{prop}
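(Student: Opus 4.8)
The statement is the group analogue of Ngô's observation that over the regular locus the relevant affine Springer fiber is a torsor under the symmetry group. By Lemma~\ref{second-def-lem}, the subspace $X_\ga^{\la,w}$ classifies pairs $(h,\iota)$ with $h:\spec R[[\varpi]]\to[V_G^{\la,w}/G]$ lifting $a:\spec R[[\varpi]]\to\fC_+^\la$, together with a generic identification $\iota$ of $h$ with the point $\ga:\spec R((\varpi))\to[V_G^\la/G]$. The first step is to feed in Proposition~\ref{BJ-gerbe-V-lambda-prop}: the map $[\chi_\la^w]:[V_G^{\la,w}/G]\to\fC_+^\la$ is a gerbe under $\bB J^\la$, neutralized by the section $\epsilon_\la^w$. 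Pulling back along $a$, the stack of lifts $h$ of $a$ over $\spec R[[\varpi]]$ (with no generic constraint) is a gerbe under $\bB(L^+J_a)$; because the section $\epsilon_\la^w$ trivializes it, this gerbe is neutral, so its set of isomorphism classes is a torsor under $H^1(\spec\cO_R, J_a)=L^+J_a$-torsors — more precisely, the groupoid of such $h$ is equivalent to the groupoid of $J_a$-torsors on $\spec R[[\varpi]]$.

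**Main steps.** Concretely I would argue as follows. First, fix the base point: since $a\in\fC_+^\la(\cO)$ and $\ga_\la$ lies in the generic fiber with $\chi_\la(\ga_\la)=a$, the section value $\epsilon_\la^w(a)\in V_G^{\la,w}(\cO)$ is $G(F)$-conjugate to $\ga_\la$ (both are regular with the same image in $\fC_+^\la$ over $F$, and $V_G^{\la,w}/G\to\fC_+^\la$ is a gerbe over the regular semisimple locus, so the fiber groupoid over $\spec F$ is connected); choosing such a $g_0\in G(F)$ exhibits a point of $X_\ga^{\la,w}$, so the space is nonempty. Second, identify the $\cP_a$-action: by the discussion preceding the proposition, $LG_\ga$ acts on $X_\ga^\la$ through $\cP_a=LJ_a/L^+J_a$, and this action preserves $X_\ga^{\la,w}$. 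Third, show the action is free and transitive on $R$-points for every $R$. Transitivity: given two pairs $(h,\iota)$ and $(h',\iota')$, the gerbe-neutrality from Proposition~\ref{BJ-gerbe-V-lambda-prop} together with the identification $\iota,\iota'$ over $\spec R((\varpi))$ means the two lifts $h,h'$ differ by a $J_a$-torsor on $\spec R[[\varpi]]$ equipped with a trivialization over $\spec R((\varpi))$, i.e. precisely by an element of $\cP_a(R)=(LJ_a/L^+J_a)(R)$; conversely that element moves $(h,\iota)$ to $(h',\iota')$. Freeness: if $p\in\cP_a(R)$ fixes $(h,\iota)$, then the corresponding $J_a$-torsor on $\spec R[[\varpi]]$ with its generic trivialization is itself trivialized compatibly with $\iota$, forcing $p$ to come from $L^+J_a(R)$, hence $p=1$ in $\cP_a(R)$. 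Finally, a torsor is the same as a free transitive action with nonempty total space (all of these have been established as functor-of-points statements, hence hold fppf-locally, which is all that is needed for a torsor under the ind-scheme $\cP_a$).

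**Where the difficulty lies.** The routine parts are the functor-of-points bookkeeping and the translation through Lemma~\ref{second-def-lem}. The one genuine point that needs care is the identification ``difference of two lifts $=$ a $J_a$-torsor with generic trivialization,'' i.e. verifying that the $\bB J^\la$-gerbe structure of Proposition~\ref{BJ-gerbe-V-lambda-prop} interacts correctly with the generic identification data $\iota$ — equivalently, that the canonical isomorphism $LJ_a\cong LG_\ga$ over $\spec F$ (used to define $\cP_a$) is the same one implicit in the $\bB\cJ$-action of Proposition~\ref{BJ-action-on-V_G-prop} restricted to the regular semisimple fiber. This is where one must genuinely use that $\chi_+^*\cJ\to\cI$ is an isomorphism over $V_{G_0}^{\mathrm{reg}}$, so that over $\spec F$ the group $J_a$ is canonically $G_\ga$. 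Once that compatibility is in place, the torsor statement follows formally; the argument is essentially identical to Ngô's proof of \cite[Proposition 2.2.1]{Ngo10} (or its affine Springer fiber incarnation), transported through the monoid picture, and I would point to that analogy rather than re-derive the gerbe formalism.
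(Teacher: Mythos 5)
Your proof is correct and follows essentially the same route as the paper, whose entire proof is the one-line assertion that the statement is a consequence of Proposition~\ref{BJ-gerbe-V-lambda-prop}; you have simply filled in the standard gerbe-to-torsor argument (two lifts of $a$ differ by a $J_a$-torsor on $\spec R[[\varpi]]$ trivialized generically via $\iota$, giving a free transitive $\cP_a$-action, with nonemptiness supplied by the section $\epsilon_\la^w$ as in the proof of Proposition~\ref{nonempty-prop}). The compatibility point you flag — that the identification $LJ_a\cong LG_\ga$ used to define $\cP_a$ agrees with the one implicit in the $\bB\cJ$-action — is exactly the content of $\chi_+^*\cJ\to\cI$ being an isomorphism over $V_{G_0}^{\mathrm{reg}}$, as you say.
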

\begin{proof}
This is a consequence of \ref{BJ-gerbe-V-lambda-prop}.
\end{proof}
\begin{rem}
Unlike the Lie algebra case, $X_\ga^{\la,\mathrm{reg}}$ may not be a $\cP_a$-torsor in general. See the discussion in \S~\ref{regular-components-section}.
\end{rem}
\subsubsection{Galois description of $\cP_a$}
Let $A$ be the finite free $\cO$-algebra defined by the Cartesian diagram
\begin{equation}\label{cameral-cover-diagram-eq}
\xymatrix{
X_a:=\spec A\ar[r]\ar[d] & V_T^\la\ar[d]^{q_\la}\\
\spec\cO\ar[r]^a & \fC_+^\la
}
\end{equation}
Let $A^\flat$ be the normalization of $A$ and $X_a^\flat:=\spec A^\flat$. Then $W$ acts naturally on the $\cO$-algebras $A$ and $A^\flat$.\par 
Let $J_a^\flat$ be the finite type Neron model of $J_a$. Hence $J_a^\flat$ is a smooth commutative group scheme over $\cO$ such that $J_a^\flat(F)=J_a(F)=G_\ga(F)$ and $J_a^\flat(\cO)$ is the maximal bounded subgroup of $G_\ga(F)$. 

\begin{lem}\label{J_a-flat-Galois-description-lem}
There is a canocical isomorphism
\[J_a^\flat\cong(\prod_{A^\flat/\cO}T\times X_a^\flat)^W\]
\end{lem}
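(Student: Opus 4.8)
The plan is to reduce the claim to the analogous Galois description of the universal centralizer $\cJ^1$ over $\fC_+$ (Proposition~\ref{Galois-description-J-prop}), pulled back along $a:\spec\cO\to\fC_+^\la$, and then to identify the pulled-back restriction of scalars with the Néron model on the left. First I would recall that $\cJ^\la$ is the pull-back of $\cJ$ along $\fC_+^\la\to\fC_+$, and that by Proposition~\ref{Galois-description-J-prop} there is an open embedding $\cJ\into\cJ^1=(\prod_{V_{T_0}/\fC_+}T\times V_{T_0})^W$ which is an isomorphism on Lie algebras; pulling back along $\fC_+^\la\to\fC_+$ and then along $a$, one gets an open embedding $J_a\into (\prod_{A/\cO}T\times X_a)^W$ with the same generic fiber $G_\ga$ over $F$ (using that $a\in\fC_+^\la(F)^{\mathrm{rs}}$, so that $q_\la$ is étale over the generic point and the restriction of scalars is already the torus $G_\ga$ there).

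The key step is then to pass from the (possibly non-normal) $X_a=\spec A$ to its normalization $X_a^\flat=\spec A^\flat$, and to show that the $W$-fixed points of the restriction of scalars along $A^\flat/\cO$ is exactly the finite-type Néron model $J_a^\flat$. For this I would argue in two stages. First, $\bigl(\prod_{A^\flat/\cO}T\times X_a^\flat\bigr)^W$ is a smooth commutative group scheme over $\cO$: smoothness of restriction of scalars along the finite flat $A^\flat/\cO$ holds because $A^\flat$ is a product of complete DVRs (it is normal, finite and flat over $\cO$) and $T$ is smooth, and taking $W$-fixed points preserves smoothness because the order of $W$ is invertible in $k$ by our standing hypothesis on $\mathrm{char}(k)$; then one checks the generic fiber is $G_\ga$ and that it is the \emph{maximal} such smooth model. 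Second, to see it is the Néron model one verifies the Néron mapping property, or more directly identifies $\bigl(\prod_{A^\flat/\cO}T\times X_a^\flat\bigr)^W(\cO)$ with the maximal bounded subgroup of $G_\ga(F)$: a point of $G_\ga(F)=\bigl(\prod_{A^\flat/F}T\times X_a^\flat\bigr)^W(F)$ extends over $\cO$ after normalization precisely when its components, viewed in $T$ over each branch of $X_a^\flat$, are integral, and this integrality condition cuts out exactly the bounded elements.

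I expect the main obstacle to be the passage to the normalization — i.e. showing that replacing $A$ by $A^\flat$ in the restriction of scalars produces precisely the Néron model and not some intermediate smooth model, and in particular that no further "blow-up" is needed beyond normalizing the cameral cover. This is where one uses that $G_\ga$ is a torus (the centralizer of a regular semisimple element with $G_0$ simply connected), so that the relevant Néron models are well understood via the Galois/lattice description, together with the fact that the discriminant divisor $\fD_+^\la$ controls the ramification of $q_\la$ and hence the singularities of $X_a$. A clean way to package this is to observe that $J_a^\flat$ and $\bigl(\prod_{A^\flat/\cO}T\times X_a^\flat\bigr)^W$ are both smooth, both have generic fiber $G_\ga$, and both satisfy $(\,-\,)(\cO)=$ maximal bounded subgroup of $G_\ga(F)$; since a smooth commutative group scheme over $\cO$ with given generic fiber is determined by its group of $\cO$-points among those dominated by the Néron model, the two coincide. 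I would close by remarking that this is the monoid analogue of the well-known description in the Lie algebra case (cf. \cite{Ngo10}), the only new input being that $q_\la$ is finite flat and generically a $W$-torsor, established in \ref{C-plus-la-section}.
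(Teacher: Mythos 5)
Your argument is correct and is essentially the same as the paper's, which simply defers to \cite[Proposition 3.8.2]{Ngo10}: identify the $W$-fixed Weil restriction along the normalized cameral cover as a smooth group scheme with generic fiber $G_\ga$ whose $\cO$-points are the maximal bounded subgroup, and conclude by the characterization of the finite-type N\'eron model. Your write-up just supplies the details (smoothness via $A^\flat$ being a product of complete DVRs and $|W|$ invertible in $k$, and the computation of $\cO$-points) that the paper leaves to the citation.
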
 
\begin{proof}
The proof is the same as \cite[Proposition 3.8.2]{Ngo10}.
\end{proof}

\begin{cor}\label{Lie-P_a-cor}
$\mathrm{Lie}(\cP_a)=(\ft\otimes_k (A^\flat/A))^W$
\end{cor}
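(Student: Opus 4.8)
The plan is to compute $\mathrm{Lie}(\cP_a)$ by comparing $\cP_a$ with the quotient attached to the finite-type Néron model $J_a^\flat$, following Ngô's treatment of the classical affine Springer fiber (see \citep{Ngo10}). The Néron property furnishes a canonical morphism $J_a\to J_a^\flat$ of smooth affine $\cO$-group schemes which is an isomorphism over $F$; inside $LJ_a\cong LG_\ga$ this identifies $L^+J_a$ with a closed subgroup of $L^+J_a^\flat$, and, writing $\cR_a:=L^+J_a^\flat/L^+J_a$ and $\cP_a^\flat:=LJ_a/L^+J_a^\flat$, we obtain a short exact sequence of fppf sheaves of groups
\[1\longrightarrow\cR_a\longrightarrow\cP_a\longrightarrow\cP_a^\flat\longrightarrow1.\]
The first and decisive step is to show that $\cP_a^\flat$, the affine Grassmannian of the Néron model of the torus $G_\ga$, is a discrete (i.e.\ $0$-dimensional, ind-étale over $k$) ind-scheme, so that $\mathrm{Lie}(\cP_a^\flat)=0$; since $\mathrm{Lie}(-)$ is left exact this yields $\mathrm{Lie}(\cR_a)\xrightarrow{\ \sim\ }\mathrm{Lie}(\cP_a)$. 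For the discreteness I would invoke \lemref{J_a-flat-Galois-description-lem} together with the structure theory of Néron models of tori: after the finite étale base change splitting $G_\ga$, which is tame since $\mathrm{char}(k)\nmid|W|$, one reduces to affine Grassmannians of induced tori, i.e.\ of Weil restrictions $\prod_{\cO'/\cO}\bG_m$ for finite extensions $\cO'/\cO$, and $L\prod_{\cO'/\cO}\bG_m$, $L^+\prod_{\cO'/\cO}\bG_m$ are just the loop and arc groups of $\bG_m$ over $\cO'$, so the quotient sheafifies to $\underline{\bZ}$. I expect this to be the main obstacle: it is where the a priori infinite-dimensional object $\mathrm{Lie}(LG_\ga)/\mathrm{Lie}(L^+J_a)$ is replaced by a genuine finite-dimensional Lie algebra.

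Granting this, I would compute $\mathrm{Lie}(\cR_a)$. Since $J_a\to J_a^\flat$ is an isomorphism over $F$ between smooth affine $\cO$-group schemes, $J_a$ is obtained from $J_a^\flat$ by a finite sequence of dilatations; consequently $L^+J_a\hookrightarrow L^+J_a^\flat$ is a closed immersion of pro-algebraic groups whose cokernel $\cR_a$ is a smooth affine group scheme of finite type over $k$, with all maps in $1\to L^+J_a\to L^+J_a^\flat\to\cR_a\to1$ smooth. Applying the exact functor $\mathrm{Lie}(-)$ and using $\mathrm{Lie}(L^+\cG)=\mathrm{Lie}(\cG)$ for a smooth affine $\cO$-group $\cG$, we get that $\mathrm{Lie}(\cR_a)$ is the cokernel of the canonical injection $\mathrm{Lie}(J_a)\hookrightarrow\mathrm{Lie}(J_a^\flat)$.

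Finally I would insert the two Galois descriptions. \lemref{J_a-flat-Galois-description-lem} gives $J_a^\flat\cong(\prod_{A^\flat/\cO}T\times X_a^\flat)^W$, hence $\mathrm{Lie}(J_a^\flat)=(\ft\otimes_k A^\flat)^W$; and \propref{Galois-description-J-prop}, whose open embedding $\cJ\into\cJ^1$ with $\cJ^1=(\prod_{V_{T_0}/\fC_+}T\times V_{T_0})^W$ identifies $\mathrm{Lie}(\cJ)=\mathrm{Lie}(\cJ^1)$, pulled back along $a\colon\spec\cO\to\fC_+^\la$, gives $\mathrm{Lie}(J_a)=(\ft\otimes_k A)^W$; here I use the base-change compatibility of restriction of scalars together with the identity $\mathrm{Lie}(\prod_{B/\cO}(T\times\spec B))=\ft\otimes_k B$ for a $k$-torus $T$ and a finite flat $\cO$-algebra $B$, and the fact that forming Lie algebras commutes with $(-)^W$ because $\mathrm{char}(k)\nmid|W|$. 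The canonical map $J_a\to J_a^\flat$ corresponds, at the level of Lie algebras, to the inclusion induced by $A\subseteq A^\flat$ (by uniqueness of the Néron morphism extending $\mathrm{id}_{G_\ga}$). Since $\mathrm{char}(k)\nmid|W|$, the functor $(-)^W$ is exact, so applying it to $0\to\ft\otimes_k A\to\ft\otimes_k A^\flat\to\ft\otimes_k(A^\flat/A)\to0$ gives $\mathrm{Lie}(\cR_a)=(\ft\otimes_k A^\flat)^W/(\ft\otimes_k A)^W=(\ft\otimes_k(A^\flat/A))^W$, which combined with $\mathrm{Lie}(\cP_a)\cong\mathrm{Lie}(\cR_a)$ is the assertion.
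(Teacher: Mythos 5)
Your proof is correct and takes essentially the same route as the paper's: one identifies $L^+J_a^\flat/L^+J_a$ as an open subgroup of $\cP_a$ (your discreteness of $\cP_a^\flat=LJ_a/L^+J_a^\flat$ is equivalent to this), and then computes its Lie algebra from the Galois descriptions of $J_a$ in \propref{Galois-description-J-prop} and of $J_a^\flat$ in \lemref{J_a-flat-Galois-description-lem}. You merely make explicit two steps the paper leaves implicit, namely the justification of that openness via the N\'eron model of the torus $G_\ga$ and the exactness of $(-)^W$ used to identify $(\ft\otimes_k A^\flat)^W/(\ft\otimes_k A)^W$ with $(\ft\otimes_k(A^\flat/A))^W$.
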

\begin{proof}
The quotient $L^+J_a^\flat/L^+J_a$ is an open subgroup of $\cP_a$. Hence we have isomorphism of $\cO$ modules
\[\mathrm{Lie}\cP_a\cong\mathrm{Lie}(L^+J_a^\flat)/\mathrm{Lie}(L^+J_a).\]
On the other hand, by \ref{Galois-description-J-prop}, we have
\[\mathrm{Lie}L^+J_a=(\ft\otimes_k A)^W\]
and by \ref{J_a-flat-Galois-description-lem},
\[\mathrm{Lie}L^+J_a^\flat=(\ft\otimes_k A^\flat)^W.\]
Hence the Corollary follows.
\end{proof}

\section{Geometry in the unramified case}\label{unr-section}
In this section we assume that $\gamma\in G(F)^{\mathrm{rs}}$ is an \emph{unramified} regular semisimple element. Since $k$ is algebraically closed,  after conjugation we may assume that $\gamma\in\varpi^\mu T(\cO)\cap G^{\mathrm{rs}}(F)$, where $\mu\in\Lambda_+$ is a \emph{dominant} coweight. In other words, $\mu=\nu_\ga$ is the Newton points of $\ga$. Then by Lemma~\ref{d-ga-newton-lem} the discriminant valuation for $\ga$ is
\[d(\ga)=2\sum_{\alpha\in\Phi^+}\mathrm{val}(\alpha(\gamma)-1)-\langle2\rho,\mu\rangle.\]
For later convenience, we also define
\begin{equation}\label{r-gamma-eq}
r(\gamma):=\sum_{\alpha\in\Phi^+}\mathrm{val}(\alpha(\gamma)-1)=\frac{1}{2}d(\gamma)+\langle\rho,\mu\rangle.
\end{equation} 
Fix a dominant coweight $\la\in\Lambda_+$ such that $\mu\le \la$. By Proposition~\ref{nonempty-prop}, this implies that $X_\ga^\la$ is nonempty.
\subsection{Admissible subsets of loop spaces}
\subsubsection{}
Let $LU$ and $L^+U$ be the loop space and arc space of $U$. For each integer $n\ge0$, let $U_n:=\ker(L^+U\to L^+_nU)$. Then $\{U_n\}_{n\ge0}$ form a decreasing sequence of compact open subgroups of $LU$.\par 
A subset of $L^+U$ is \emph{admissible} if it is the pre-image of a locally closed subset of $L^+_nU$ for some $n$. A subset $Z$ of $LU$ is \emph{admissible} of there exists a dominant  coweight $\mu_0$ such that $\varpi^{\mu_0}Z\varpi^{-\mu_0}$ is an admissible subset of $L^+U$.

\subsubsection{}
Consider the map
\begin{equation}\label{f-gamma-eq}
\xymatrix@R=1pt{
f_\gamma: LU\ar[r] & LU\\
u\ar@{|->}[r] & u^{-1}\gamma u\gamma^{-1}
}
\end{equation}
Since $\mu$ is dominant, we have $f_\gamma(U_n)\subset U_n$ for all $n\ge0$.\par 
Let $f_0: L^+U\to L^+U$  be the restriction of $f_\gamma$ to the arc space $L^+U$.
\begin{lem}\label{f_0-smooth-lem}
Let $V$ be an admissible subset of $L^+U$. Let $n$ be a positive integer such that 
\begin{itemize}
\item $V$ is right invariant under $U_n$ and
\item $n\ge\mathrm{val}(\alpha(\gamma)-1)$ for all $\alpha\in\Phi^+$.
\end{itemize}
Suppose moreover that $V\subset f_0(L^+U)$. Then the set $f_0^{-1}(V)$ is admissible and right invariant under $U_ n$. Moreover, $f_0$ induces a smooth surjective map 
\[f_0^{-1}(V)/U_n\to V/U_n\] whose fibers are isomorphic to $\bA^{r(\gamma)}$.
\end{lem}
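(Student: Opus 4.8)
The plan is to analyze the map $f_0$ one "level" at a time, comparing the filtration $\{U_m\}$ on the source and target. First I would record the elementary fact that for $u\in U_m$, writing $u = \exp(X)$ with $X\in\mathrm{Lie}(U)\otimes\varpi^m\cO$ (in characteristic zero, or the appropriate Greenberg/exponential-type substitute in the good-characteristic case, using that $U$ is a split unipotent group so it is successively an extension of $\bG_a$'s), one has
\[
f_\gamma(u) = u^{-1}\,\gamma u\gamma^{-1} \equiv \exp\bigl((\mathrm{Ad}(\gamma^{-1})-\mathrm{Id})X\bigr) \pmod{U_{m+1}\text{-type corrections}},
\]
so that on the associated graded pieces $U_m/U_{m+1}\cong\bigoplus_{\alpha\in\Phi^+}\varpi^m k$ the induced map is multiplication by $(\alpha(\gamma)^{-1}-1)$ on the $\alpha$-component. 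Since $\mathrm{val}(\alpha(\gamma)^{-1}-1)=\mathrm{val}(\alpha(\gamma)-1)=:m_\alpha\ge 0$, this graded map is injective with cokernel of dimension $\sum_\alpha\min(m,m_\alpha)$ at level $m$; summing over $m$ and using $m_\alpha<\infty$ gives that the "total cokernel" has dimension $\sum_{\alpha\in\Phi^+}m_\alpha = r(\gamma)$, while the kernel is trivial (injectivity of $f_0$ follows because $\gamma$ is regular, so no $\alpha(\gamma)=1$). This is the computation that produces the number $r(\gamma)$.

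Next I would promote this graded statement to the genuine map. Fix $n$ as in the hypotheses; the key point is that the condition $n\ge m_\alpha$ for all $\alpha$ guarantees that beyond level $n$ the map $f_\gamma$ is an isomorphism on each graded piece, hence (by completeness of $L^+U$ and a standard successive-approximation/Hensel argument for the pro-unipotent group) $f_\gamma$ restricts to an \emph{isomorphism} $U_n\xrightarrow{\sim}f_\gamma(U_n)$ with $f_\gamma(U_n)=U_n$ — this last equality because the graded map is an isomorphism at every level $m\ge n$ and $U_n=\varprojlim U_n/U_m$. In particular $f_0^{-1}(V)$, being a union of cosets of $f_\gamma^{-1}(U_n)=U_n$ over the preimage of $V/U_n$ under the map on the finite-type truncation $L_n^+U/(U_n/U_N)\to L_n^+U/(U_n/U_N)$ (for $N\gg n$), is admissible and right $U_n$-invariant, and the quotient map $f_0^{-1}(V)/U_n\to V/U_n$ is identified with a morphism of finite-dimensional $k$-schemes.

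Then I would identify that morphism of finite-type quotients explicitly as an affine-space bundle. Working on the truncation $L_N^+U$ for $N$ large (so that $V$, being right $U_n$-invariant, is the preimage of a locally closed subset $\bar V\subset L_N^+U/(U_n\bmod U_N)$, and everything relevant happens at levels $<n$), the map $f_0$ descends to a map of smooth finite-type $k$-schemes whose fibers over $\bar V$ are, by the graded computation above, torsors under the successive kernels/cokernels; since everything is built from $\bG_a$'s, each fiber is a successive extension of affine spaces over $k$, hence isomorphic to $\bA^{r(\gamma)}$, and the bundle is Zariski-locally trivial (affine-space bundles over any base split into a filtration of $\bG_a$-torsors, which are trivial), giving smoothness of $f_0^{-1}(V)/U_n\to V/U_n$. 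Surjectivity is exactly the hypothesis $V\subset f_0(L^+U)$ together with the fact that $V$ is $U_n$-invariant and $f_\gamma(U_n)=U_n$, so every point of $V$ in the image of $f_0$ lifts.

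The main obstacle I anticipate is the passage from the graded/infinitesimal linear-algebra picture to the actual nonlinear map $f_\gamma$ on the pro-unipotent group $LU$: one must control the higher-order correction terms (commutators among the root subgroups, and the discrepancy between $u^{-1}\gamma u\gamma^{-1}$ and its linearization) and verify that the hypothesis $n\ge m_\alpha$ is precisely what is needed to make the successive-approximation argument converge and yield $f_\gamma(U_n)=U_n$ rather than merely a proper subgroup. In good positive characteristic one also needs a substitute for $\exp$; I would handle this by filtering $U$ by a central series with $\bG_a$-quotients compatible with the $T$-action and running the approximation argument one quotient at a time, which avoids any use of the exponential map.
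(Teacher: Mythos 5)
Your overall strategy coincides with the paper's: truncate at level $n$, compute the fiber of the induced map over $1$ by the root-by-root valuation count to get $r(\gamma)=\sum_{\alpha}\mathrm{val}(\alpha(\gamma)-1)$, deduce smoothness from homogeneity under a twisted right action of the unipotent group, and get surjectivity from the hypothesis $V\subset f_0(L^+U)$. However, there is one genuinely false step in your argument: the claim that the condition $n\ge\mathrm{val}(\alpha(\gamma)-1)$ makes $f_\gamma$ an isomorphism on each graded piece $U_m/U_{m+1}$ for $m\ge n$, and hence that $f_\gamma(U_n)=U_n$ and $f_\gamma^{-1}(U_n)=U_n$. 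On the graded piece at level $m$ the induced map on the $\alpha$-component is multiplication by an element of valuation $m_\alpha:=\mathrm{val}(\alpha(\gamma)-1)$, so it is \emph{zero} (not an isomorphism) whenever $m_\alpha>0$, regardless of how large $m$ is. Concretely, for $G=\mathrm{SL}_2$ and $\gamma=\mathrm{diag}(t,t^{-1})$ with $t=1+\varpi$ one has $f_\gamma(u(x))=u((t^{2}-1)x)$, so $f_\gamma(U_n)=U_{n+1}$ and $f_\gamma^{-1}(U_n)=U_{n-1}$. The correct statements in the paper's setting are only $f_\gamma(U_n)\subset U_n$ (from dominance of $\mu$), $U_n\subset f_\gamma(L^+U)$ (Lemma~\ref{image-L^+U-lem}), and $f_\gamma^{-1}(U_n)\subset U_{n-r(\gamma)}$ (Lemma~\ref{inverse-image-U_n-lem}); the hypothesis $n\ge m_\alpha$ is there so that the fiber dimension $\sum_\alpha\min(n,m_\alpha)$ equals $r(\gamma)$ and so that $U_n$ lies in the image, not to force bijectivity on $U_n$.

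Fortunately this false claim is not needed for what you use it for. Right $U_n$-invariance of $f_0^{-1}(V)$ and well-definedness of the truncated map follow from the direct computation
\[
f_0(uv)=v^{-1}f_0(u)\,\gamma v\gamma^{-1}=f_0(u)\cdot\bigl(f_0(u)^{-1}v^{-1}f_0(u)\bigr)\cdot\bigl(\gamma v\gamma^{-1}\bigr)\in f_0(u)\,U_n
\qquad(v\in U_n),
\]
using only that $U_n$ is normal in $L^+U$ and $\gamma U_n\gamma^{-1}\subset U_n$; this is the ``straightforward calculation'' in the paper, and it gives admissibility since $f_0^{-1}(V)/U_n=\bar f_0^{-1}(V/U_n)$ is locally closed in $L_n^+U$. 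Surjectivity needs only the hypothesis $V\subset f_0(L^+U)$, not $f_\gamma(U_n)=U_n$. With that repair, your fiber computation and smoothness argument (the fiber over $1$ is an affine space of dimension $\sum_\alpha m_\alpha=r(\gamma)$, and $\bar f_0$ is the orbit map of the action $v*u=u^{-1}v\gamma u\gamma^{-1}$ with smooth stabilizer) match the paper's proof. I would also caution that your filtration-by-root-groups bookkeeping must be done on the truncation $L_n^+U$ rather than graded piece by graded piece, precisely because the graded maps can vanish.
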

\begin{proof}
Let $p_n: L^+U\to L_n^+U$ be the natural projection and 
\[\bar f_0: L_n^+U\to L_n^+U\]
the map induced by $f_0$. Since $V$ is right invariant under $U_n$ by assumption, a straightforward calculation shows that $f_0^{-1}(V)$ is also right invariant under $U_n$. Denote $\overline{V}:=V/U_n$. Then we have $f_0^{-1}(V)/U_n=\bar f_0^{-1}(\overline{V})$, a locally closed subset of $L_n^+U$. In particular, $f_0^{-1}(V)$ is admissible.\par  
Since $V\subset f_0(L^+U)$, the induced map $\bar f_0^{-1}(\overline{V})\to\overline{V}$ is surjective. It remains to show that it is smooth with fiber isomorphic to $\bA^{r(\ga)}$.\par 
 Choose an isomorphism (of schemes) $U\cong\bA^s$ where $s$ is the number of positive roots. For $u\in U(\cO/\varpi^n\cO)$, let $(u_\alpha)_{\alpha\in\Phi^+}$ be its coordinates with $u_\alpha\in\cO/\varpi^n\cO$. Then 
\[\bar f_0(u)=1\quad\Leftrightarrow\quad (\alpha(\gamma)-1)u_\alpha=0\in\cO/\varpi^n\cO.\]
Since $n\ge r_\alpha:=\mathrm{val}(\alpha(\gamma)-1)$, this condition is equivalent to $u_\alpha\in\varpi^{n-r_\alpha}\cO/\varpi^n\cO$. Such an element is determined by the first $r_\alpha$ coefficients (in $k$) of its Taylor expansion. Hence $\bar f_\gamma^{-1}(1)$ is an affine space of dimension
 \[\sum_{\alpha\in\Phi^+}r_\alpha=\sum_{\alpha\in\Phi^+}\mathrm{val}(\alpha(\gamma)-1)=r(\gamma)\]
by Definition~\ref{d-ga-defn} and Equation~\eqref{r-gamma-eq}.\par 
Finally, to see that the map is smooth, consider the right action of $L_n^+U$ on itself defined by $v*u:=u^{-1}v\ga u\ga^{-1}$ for $u,v\in U(\cO/\varpi^n\cO)$. Then $\bar f_0(u)=1*u$ and hence $\bar f_0$ is the orbit map at $1$ of the $L^+_nU$-action. Since the stabilizer subgroup $\bar f_\ga^{-1}(1)$ is smooth as we have just seen, the map $f_0^{-1}(V)/U_n\to V/U_n$ is smooth.
\end{proof}

\subsubsection{}
Let $\alpha_1,\dotsc,\alpha_s$ be an ordering of the set of positive roots such that $\mathrm{ht}(\alpha_i)\le\mathrm{ht}(\alpha_{i+1})$ . For each $1\le i\le s$, let $U[i]$ be the subgroup of $U$ generated by root groups $U_{\alpha_j}$ such that $j\ge i$. Also we denote $U[s+1]=1$. Then $U[1]=U$ and for each $1\le i\le s+1$, $U[i]$ is a normal subgroup of $U$ with successive quotients $U\langle i\rangle:=U[i]/U[i+1]\cong\bG_a$. The map $\mathrm{Ad}(\gamma)$ preserves each subgroup $U[i]$ and induces multiplication by $\alpha_i(\gamma)-1$ map on $U\langle i\rangle$. \par 
For each $1\le i\le s$, denote $r_i:=\mathrm{val}(\alpha_i(\gamma)-1)$. Then by Definition~\ref{d-ga-defn} and Equation~\eqref{r-gamma-eq},
\[\sum_{i=1}^s r_i=d(\gamma)+\langle2\rho,\mu\rangle=:r(\gamma).\]
The following lemma is analogous to \cite[Proposition 2.11.2 (2)]{GHKR}.
\begin{lem}\label{image-L^+U-lem}
Let $n$ be a positive integer such that $n\ge\mathrm{val}(\alpha(\gamma)-1)$ for every positive root $\alpha$. Then $U_n\subset f_\gamma(L^+U)$.
\end{lem}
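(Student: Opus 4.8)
The plan is to prove the stronger statement $U_n \subset f_\gamma(L^+U)$ by successive approximation along the filtration $U = U[1] \supset U[2] \supset \cdots \supset U[s+1] = 1$ introduced just above the lemma. The point is that modulo each $U[i+1]$, the map $f_\gamma$ becomes, on the quotient $U\langle i\rangle \cong \bG_a$, essentially multiplication by $\alpha_i(\gamma) - 1$, which has valuation $r_i \le n$; hence on arc spaces it surjects onto $\varpi^{r_i}\cO \supset \varpi^n\cO$, i.e. onto the image of $U_n$ in $L^+U\langle i\rangle$. So the strategy is: given a target $v \in U_n$, construct $u \in L^+U$ with $f_\gamma(u) = v$ step by step, at stage $i$ correcting the class of $u$ modulo $U[i+1]$ so that $f_\gamma(u) \equiv v \pmod{U[i+1]}$, using that $U_n \cap U[i]$ maps into the image.

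More precisely, I would argue by downward induction on $i$ from $s+1$ to $1$ on the following claim: for every $v \in U_n$ there is $u^{(i)} \in L^+U$ with $u^{(i)} \in U[i]$ after... — actually it is cleaner to induct upward: set $u^{(1)} = 1$, and suppose we have found $u^{(i)} \in L^+U$ such that $f_\gamma(u^{(i)}) v^{-1} \in U_n \cap U[i]$ (the case $i=1$ being trivial since $f_\gamma(1) = 1$ and $v \in U_n = U_n \cap U[1]$). Let $w_i \in U_n \cap U[i]$ denote the error term and let $\bar w_i \in L^+ U\langle i\rangle$ be its image; since $\bar w_i$ lies in $\varpi^n\cO$ and $n \ge r_i$, we may write $\bar w_i = (\alpha_i(\gamma)-1)\bar c_i$ for some $\bar c_i \in \varpi^{n-r_i}\cO$, in particular $\bar c_i$ lifts to some $c_i \in L^+ U$ lying in $U[i]$ modulo $U[i+1]$. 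Setting $u^{(i+1)} := u^{(i)} c_i$ (or $c_i u^{(i)}$, with the order chosen so the leading correction works out), a direct computation with the commutator identity $f_\gamma(ab) = f_\gamma(a)\cdot a\bigl(f_\gamma(b)\bigr)a^{-1}$ and the fact that $\mathrm{Ad}(\gamma)$ preserves the filtration shows that $f_\gamma(u^{(i+1)}) v^{-1} \in U_n \cap U[i+1]$: the $U\langle i\rangle$-component of the error is killed by construction, all other modifications land in $U[i+1]$, and conjugation by $u^{(i)} \in L^+U$ together with $\gamma$-action keeps everything inside $U_n$ (here we use $f_\gamma(U_n) \subset U_n$ and that $U_n$ is a normal subgroup of $L^+U$... more carefully, that $U_n$ is normalized by $L^+U$). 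After $s$ steps the error lies in $U_n \cap U[s+1] = 1$, so $u := u^{(s+1)}$ satisfies $f_\gamma(u) = v$.

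The main obstacle is bookkeeping the non-abelian corrections: when we multiply $u^{(i)}$ by $c_i$, the value $f_\gamma(u^{(i)} c_i)$ is not simply $f_\gamma(u^{(i)}) \cdot f_\gamma(c_i)$ but involves a conjugation by $u^{(i)}$ and cross-terms, and one must check that (a) the leading term indeed produces the desired $(\alpha_i(\gamma)-1)\bar c_i$ contribution in $U\langle i\rangle$ and (b) every other term is forced into the smaller group $U[i+1]$ and stays within $U_n$. This is where the ordering of positive roots by height is essential — it guarantees that $U[i]$ is normal in $U$ and that commutators $[U_{\alpha_j}, U_{\alpha_k}]$ with $j, k \ge i$ land in $U[i+1]$ when the relevant root sum has strictly larger height — so all the cross-terms are indeed of the next level. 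Once this filtration compatibility is set up, each individual step reduces to the one-dimensional surjectivity statement "$\varpi^n\cO \subset (\alpha_i(\gamma)-1)\cO$", which is immediate from $n \ge r_i$. I expect this to parallel the argument of \cite[Proposition 2.11.2]{GHKR} cited in the statement, adapted to the monoid-free setting here.
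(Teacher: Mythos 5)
Your proposal is correct and is essentially the paper's own argument: a successive approximation along the height filtration $U=U[1]\supset U[2]\supset\cdots\supset U[s+1]=1$, where at stage $i$ the error term is killed on the graded piece $U\langle i\rangle\cong\bG_a$ by inverting $\alpha_i(\gamma)-1$, which is possible on $\varpi^n\cO$ precisely because $n\ge r_i$. The only difference is packaging: the paper introduces the right action $v*u:=u^{-1}v\gamma u\gamma^{-1}$ (so that $f_\gamma(u)=1*u$ and $(x*u_1)*u_2=x*(u_1u_2)$) and drives $x*u_1*\cdots*u_i$ into $U[i+1](\cO)\cap U_n$, which automates exactly the non-abelian cross-term bookkeeping you flag as the main obstacle (and note your twisted multiplicativity identity should read $f_\gamma(ab)=\mathrm{Ad}(b^{-1})\bigl(f_\gamma(a)\bigr)\cdot f_\gamma(b)$).
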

\begin{proof}
Consider the right action of $L^+U$ on itself defined by $v*u:=u^{-1}v\gamma u\gamma^{-1}$ for every $u,v\in L^+U$.
Then $f_\gamma(u)=1*u$. Since $\gamma U_m\gamma^{-1}\subset U_m$, the action $*$ induces an action of $L_m^+U$ on itself for each $m\ge0$.\par 
Let $x\in U_n$. It suffices to find $u\in L^+U$ with $x*u=1$, for then $f_\gamma(u^{-1})=x$ and in particular $x\in f_\gamma(L^+U)$.\par 
First we find an element $u_1\in U(\cO)$ such that $x*u_1\in U[2](\cO)\cap U_n$.  Let $x_1\in U_{\alpha_1}(\cO)$ be the $\alpha_1$ component of $x$ under the canonical isomorphism $U=U[1]\cong U[2]\rtimes U_{\alpha_1}$. Fix an isomorphism $U_{\alpha_1}\cong\bG_a$ and identify $x_1$ with an element in $\varpi^n\cO$. Since $n\ge r_1:=\mathrm{val}(\alpha(\gamma)-1)$, we have $u_1:=(\alpha(\gamma)-1)^{-1}x_1\in\cO$, identified with an element of $U_{\alpha_1}(\cO)$. Then $x_1*u_1=1\in U_{\alpha_1}(\cO)$ which implies that $x*u_1\in U[2](\cO)$. Moreover since
\[\overline{x*u_1}=\bar x *\bar u_1=1*\bar u_1=\bar x_1*\bar u_1=\overline{x_1*u_1}=1\in U(\cO/\varpi^n\cO)\]
we get $x*u_1\in U[2](\cO)\cap U_n$.\par 
Next proceed in the similar way, we find $u_2\in L^+U$ such that $x*u_1*u_2\in U[3](\cO)\cap U_n$ and so on. In other words, we obtain a sequence $u_1,\dotsc,u_s\in U(\cO)$ such that $x*u_1*\dotsm*u_i\in U[i+1](\cO)\cap U_n$ for all $1\le i\le s$. Finally, let $u=u_1*\dotsm *u_s\in L^+U$ and we get $x*u=1$ as desired.
\end{proof}

The proof of the following lemma is inspired by \cite[Lemma 3.8]{KoV}.
\begin{lem}\label{inverse-image-U_n-lem}
For any $n\ge r(\gamma)$, we have $f_\gamma^{-1}(U_n)\subset U_{n-r(\gamma)}$.
\end{lem}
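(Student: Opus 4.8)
The plan is to exploit the $\mathrm{Ad}(\gamma)$-stable filtration $U=U[1]\supset U[2]\supset\cdots\supset U[s+1]=1$ from above, whose graded pieces $U\langle i\rangle\cong\bG_a$ carry the homomorphism $\pi_i\colon U[i]\to U\langle i\rangle$ onto which $f_\gamma$ descends to an honest homomorphism. First I would fix, once and for all, an isomorphism of $k$-schemes $U\cong\bA^s$ adapted to this filtration, so that for each $i$ the subgroup $L(U[i])$ is the locus where the first $i-1$ coordinates vanish, the root subgroup $U_{\alpha_i}$ is the $i$-th coordinate axis, $U_m=(\varpi^m\cO)^s$ as a subset of $L^+U$ for every $m\ge0$, and $\pi_i(v)$ is the $i$-th coordinate of $v$ for $v\in L(U[i])$. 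With such coordinates, an element $v\in L(U[i])$, written uniquely as $v=v''v_i$ with $v''\in L(U[i+1])$ and $v_i\in LU_{\alpha_i}$ via $U[i]=U[i+1]\rtimes U_{\alpha_i}$, lies in $U_m$ if and only if both $v''\in U_m$ and $v_i\in U_m$. Note also that all $r_j=\mathrm{val}(\alpha_j(\gamma)-1)$ are $\ge0$ because $\mu$ is dominant, so $r_i\le\sum_{j=1}^s r_j=r(\gamma)$ and hence $\varpi^{n-r_i}\cO\subseteq\varpi^{n-r(\gamma)}\cO$; this inclusion is the only quantitative input the argument needs.

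I would then prove, by downward induction on $i$ from $s+1$ to $1$, the statement $Q(i)$: \emph{for every $k$-algebra $R$ and every $v\in L(U[i])(R)$ with $f_\gamma(v)\in U_n(R)$, one has $v\in U_{n-r(\gamma)}(R)$.} The lemma is the case $i=1$, and the base case $Q(s+1)$ is trivial since $U[s+1]=1$ and $n-r(\gamma)\ge0$. For the inductive step, assume $Q(i+1)$ and take $v\in L(U[i])$ with $f_\gamma(v)\in U_n$. Since $U[i+1]$ is normal in $U[i]$ and stable under $\mathrm{Ad}(\gamma)$, the map $\pi_i$ is a group homomorphism with $\pi_i\circ\mathrm{Ad}(\gamma)=\alpha_i(\gamma)\cdot\pi_i$, so writing $U\langle i\rangle\cong\bG_a$ additively one gets
\[\pi_i(f_\gamma(v))=-\pi_i(v)+\pi_i(\gamma v\gamma^{-1})=(\alpha_i(\gamma)-1)\,\pi_i(v).\]
As $f_\gamma(v)\in U_n$ forces $\pi_i(f_\gamma(v))\in\varpi^n\cO$ and $\alpha_i(\gamma)-1$ has valuation $r_i$, this gives $\pi_i(v)\in\varpi^{n-r_i}\cO\subseteq\varpi^{n-r(\gamma)}\cO$. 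Decomposing $v=v''v_i$ with $v_i\in LU_{\alpha_i}$ the lift of $\pi_i(v)$ and $v''\in L(U[i+1])$, this says precisely $v_i\in U_{n-r(\gamma)}\cap LU_{\alpha_i}\subset L^+U$.

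It then remains to check $f_\gamma(v'')\in U_n$, for then $Q(i+1)$ gives $v''\in U_{n-r(\gamma)}$ and hence $v=v''v_i\in U_{n-r(\gamma)}$, closing the induction. For this I would use the identity
\[f_\gamma(v''v_i)=\mathrm{Ad}(v_i^{-1})\bigl(f_\gamma(v'')\bigr)\cdot f_\gamma(v_i),\]
obtained by a direct rearrangement of $f_\gamma(v)=v^{-1}\gamma v\gamma^{-1}$. Here $f_\gamma(v'')\in L(U[i+1])$ since $U[i+1]$ is a subgroup stable under $\mathrm{Ad}(\gamma)$, and $\mathrm{Ad}(v_i^{-1})$ preserves $L(U[i+1])$ since $U[i+1]\trianglelefteq U$; meanwhile $f_\gamma(v_i)\in LU_{\alpha_i}$. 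Thus the displayed identity is nothing but the $U[i+1]\rtimes U_{\alpha_i}$ decomposition of $f_\gamma(v)$, so $f_\gamma(v)\in U_n$ together with the coordinate compatibility above forces $\mathrm{Ad}(v_i^{-1})(f_\gamma(v''))\in U_n\cap L(U[i+1])$; applying $\mathrm{Ad}(v_i)$, which preserves $L^+U$ and normalizes $U_n$ because $v_i\in L^+U$, yields $f_\gamma(v'')\in U_n\cap L(U[i+1])$, as needed.

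The individual computations are routine; the one place that rewards care is the setup --- choosing coordinates adapted to the filtration so that membership in $U_n$ can be read off graded piece by graded piece --- and then tracking that conjugation by the ``small'' element $v_i\in L^+U$ preserves $L^+U$, $L(U[i+1])$ and $U_n$. The conceptual point being finessed is that $f_\gamma$ is not a group homomorphism, so one cannot divide by $\alpha(\gamma)-1$ globally; instead one peels off the root subgroups one at a time, using that on each abelian quotient $U\langle i\rangle$ the map $f_\gamma$ does become the homomorphism ``multiplication by $\alpha_i(\gamma)-1$''. For the scheme-theoretic statement the same argument runs with $\cO$ replaced by $R[[\varpi]]$ throughout, the valuation estimates becoming membership in $\varpi^{n-r_i}R[[\varpi]]$, valid because $\alpha_i(\gamma)-1=\varpi^{r_i}u$ with $u\in\cO^\times$ still a unit in $R[[\varpi]]$.
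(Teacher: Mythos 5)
Your proof is correct, and it rests on the same two pillars as the paper's: the $\mathrm{Ad}(\gamma)$-stable filtration $U=U[1]\supset\cdots\supset U[s+1]=1$ and the observation that on each graded piece $U\langle i\rangle\cong\bG_a$ the map $f_\gamma$ becomes multiplication by $\alpha_i(\gamma)-1$, which one can invert at the cost of $r_i$ orders of $\varpi$. The bookkeeping is organized differently, though. The paper runs an upward induction, writing $u=u_iv$ with $u_i\in U[i](F)$ not necessarily integral and $v$ an already-controlled element of $U_{n-\sum_{j<i}r_j}$; because the conjugating factor $v$ is only known to that accuracy, the control on $f_\gamma(u_i)$ degrades to $U_{n-\sum_{j<i}r_j}$ at each stage and the losses telescope to $r(\gamma)$. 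You instead run a downward induction on $i$ and use the exact identity $f_\gamma(v''v_i)=\mathrm{Ad}(v_i^{-1})(f_\gamma(v''))\cdot f_\gamma(v_i)$, whose two factors lie in the complementary pieces $L(U[i+1])$ and $LU_{\alpha_i}$; reading membership in $U_n$ off componentwise, each graded piece sees the full hypothesis $f_\gamma(\cdot)\in U_n$ with no accumulated loss. This actually yields the sharper conclusion $f_\gamma^{-1}(U_n)\subset U_{n-\max_i r_i}$, of which the stated lemma is a weakening; since only $U_{n-r(\gamma)}$ is needed downstream (in Proposition~\ref{admissible-set-prop}), nothing is gained for the application, but your variant is cleaner and its setup (coordinates adapted to the filtration so that $U_m$ is read off coordinatewise) is correctly justified.
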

\begin{proof}
Let $u\in U(F)$ with $f_\gamma(u)\in U_n$. We will show by induction that 
\[u\in U[i](F)\cdot U_{n-\sum_{j<i}r_j}.\]
The case $i=1$ says $u\in U[1](F)=U(F)$ which is clear and the case $i=s+1$ gives the lemma since $\sum_{i=1}^sr_i=r(\ga)$ and $U[s+1]=1$.\par 
It remains to finish the induction step. By induction hypothesis we have $u=u_iv$ with $u_i\in U[i](F)$ and $v\in U_{n-\sum_{j<i}r_j}$. 
By assumption,
\[f_\gamma(u)=f_\gamma(u_iv)=v^{-1}\cdot u_i^{-1}\gamma u_i\gamma^{-1}\cdot \gamma v\gamma^{-1}\in U_n\] 
from which it follows that 
\[ u_i^{-1}\gamma u_i\gamma^{-1}\in U[i](F)\cap v\cdot U_n \cdot(\gamma v^{-1}\gamma^{-1})\subset U[i]_{n-\sum_{j<i}r_j}\]
After passing to the quotient $U\langle i\rangle$ we get
\[(\alpha_i(\gamma)-1)u_i\in U\langle i\rangle_{n-\sum_{j<i}r_j}\]
Under the isomorphism $U\langle i\rangle\cong\bG_a$, the set $U\langle i\rangle_{n-\sum_{j<i}r_j}$ is identified with $\varpi^{n-\sum_{j<i}r_j}\cO$. Moreover, since $\alpha_i(\gamma)-1\in\varpi^{r_i}\cO^\times$, we get that
\[u_i\in U[i+1](F)\cdot U_{n-\sum_{j<i+1}r_j}\]
and the same is true for $u=u_iv$. So we're done with the induction step.
\end{proof}

\begin{prop}\label{admissible-set-prop}
Let $Z$ be an admissible subset of the loop space $LU$. Then 
$f^{-1}_\gamma(Z)$ is admissible and there exists a positive integer $N$ such that for all $n\ge N$,  $f^{-1}_\gamma(Z)$ and $Z$ are right invariant under the group $U_n$
and the map 
\[f^{-1}_\gamma(Z)/U_n\to Z/U_n\] 
induced by $f_\gamma$ is smooth surjective whose geometric fibers are irreducible of dimension $r(\gamma)$.
\end{prop}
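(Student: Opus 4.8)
The plan is to deduce the statement from the already-established facts about the arc space---\lemref{f_0-smooth-lem}, \lemref{image-L^+U-lem} and \lemref{inverse-image-U_n-lem}---by conjugating $Z$ into $L^+U$ by a suitable cocharacter. The point that makes this work is that $\gamma\in\varpi^\mu T(\cO)\subset T(F)$ commutes with $\varpi^\sigma$ for \emph{every} cocharacter $\sigma$, so conjugation by $\varpi^\sigma$ is an automorphism of the ind-group $LU$ intertwining $f_\gamma$ with itself: $f_\gamma(\varpi^\sigma u\varpi^{-\sigma})=\varpi^\sigma f_\gamma(u)\varpi^{-\sigma}$.

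I would first record two auxiliary facts. \emph{(a)} The map $f_\gamma\colon LU\to LU$ is surjective. Using the filtration $U=U[1]\supset\dotsb\supset U[s+1]=1$, the map induced by $f_\gamma$ on each graded piece $U\langle i\rangle\cong\bG_a$ is multiplication by $\alpha_i(\gamma)-1\in F^\times$, which is surjective on $L\bG_a$; the triangular argument in the proof of \lemref{image-L^+U-lem}, run over $LU$ without the positivity constraint, then solves $f_\gamma(u)=z$ for arbitrary $z$. \emph{(b)} The set $B_0:=f_\gamma^{-1}(L^+U)$ is bounded below, i.e.\ $B_0\subset\varpi^{-\eta_0}(L^+U)\varpi^{\eta_0}$ for a dominant $\eta_0$ depending only on $\gamma$: choosing $n_0\ge r(\gamma)$ and a dominant $\nu$ with $\langle\alpha_i,\nu\rangle\ge n_0$ for all $i$, one has $L^+U\subset\varpi^{-\nu}U_{n_0}\varpi^{\nu}$, hence by \lemref{inverse-image-U_n-lem}
\[B_0=f_\gamma^{-1}(L^+U)\subset\varpi^{-\nu}f_\gamma^{-1}(U_{n_0})\varpi^{\nu}\subset\varpi^{-\nu}U_{n_0-r(\gamma)}\varpi^{\nu}.\]

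Now fix a dominant cocharacter $\mu_0$ with $\varpi^{\mu_0}Z\varpi^{-\mu_0}\subset L^+U$ and an integer $m$ with $Z$ right $U_m$-invariant, and choose a dominant $\sigma$ large enough that $\sigma-\mu_0$ and $\sigma-\mu_0-\eta_0$ are dominant. Put $Z':=\varpi^\sigma Z\varpi^{-\sigma}\subset L^+U$. Since $f_\gamma^{-1}(Z)\subset\varpi^{-\mu_0}B_0\varpi^{\mu_0}$, conjugating gives $f_\gamma^{-1}(Z')=\varpi^\sigma f_\gamma^{-1}(Z)\varpi^{-\sigma}\subset\varpi^{\sigma-\mu_0-\eta_0}(L^+U)\varpi^{-(\sigma-\mu_0-\eta_0)}\subset L^+U$, so $f_\gamma^{-1}(Z')=f_0^{-1}(Z')$; and $Z'\subset f_0(L^+U)$, because by \emph{(a)} each $z\in Z'$ is $f_\gamma(u)$ for some $u\in f_\gamma^{-1}(Z')\subset L^+U$. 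For $n$ large, $Z'$ is right $U_n$-invariant and $n\ge\mathrm{val}(\alpha(\gamma)-1)$ for all $\alpha\in\Phi^+$, so \lemref{f_0-smooth-lem} applies: $f_0^{-1}(Z')$ is admissible and right $U_n$-invariant, and $f_0^{-1}(Z')/U_n\to Z'/U_n$ is smooth and surjective with every fibre isomorphic to $\bA^{r(\gamma)}$. Conjugating back, $f_\gamma^{-1}(Z)=\varpi^{-\sigma}f_0^{-1}(Z')\varpi^{\sigma}$ is admissible, and since $\varpi^{-\sigma}U_n\varpi^{\sigma}\supseteq U_n$ for $\sigma$ dominant, both $f_\gamma^{-1}(Z)$ and $Z$ are right $U_n$-invariant for $n$ large; it remains to transport the last displayed map through the conjugation.

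The main obstacle is precisely this transport. Conjugation by $\varpi^{\sigma}$ identifies the right $U_n$-action on $f_\gamma^{-1}(Z)$ (resp.\ $Z$) with the right action of $H:=\varpi^{\sigma}U_n\varpi^{-\sigma}\subseteq U_n$ on $f_0^{-1}(Z')$ (resp.\ $Z'$), so one has to compare the $U_n$-quotient with the $H$-quotient. For $n$ large relative to $\sigma$ one checks that $H$ is normal in $U_n$ with $U_n/H$ a unipotent group isomorphic as a variety to $\bA^{\langle2\rho,\sigma\rangle}$; thus $f_0^{-1}(Z')/H\to f_0^{-1}(Z')/U_n$ and $Z'/H\to Z'/U_n$ are $(U_n/H)$-torsors, and one must verify that the square relating them to $f_0^{-1}(Z')/U_n\to Z'/U_n$ is Cartesian, so that the map $f_0^{-1}(Z')/H\to Z'/H$ inherits smoothness, surjectivity and irreducible fibres of dimension $r(\gamma)$. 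The key input here is that $f_0$ intertwines right multiplication by $g$ with the twisted action $v\mapsto g^{-1}v\gamma g\gamma^{-1}$, under which, for $n$ large, the $U_n$-orbit of any point of $L^+U$ lies in a single right $U_n$-coset; checking this compatibility at the level of finite truncations $L^+U/U_n$ is an elementary if somewhat fiddly computation, and completes the proof.
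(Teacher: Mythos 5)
Your first half is sound and follows the paper's strategy: conjugate $Z$ into the arc space by a dominant cocharacter, apply \lemref{f_0-smooth-lem} there, and conjugate back. (The detour through global surjectivity of $f_\gamma$ on $LU$ and boundedness of $f_\gamma^{-1}(L^+U)$ is correct but unnecessary: if you choose $\mu_0$ so that $Z^{\mu_0}:=\varpi^{\mu_0}Z\varpi^{-\mu_0}$ lands not merely in $L^+U$ but in $U_{n_0}$ with $n_0\ge r(\gamma)$, then \lemref{image-L^+U-lem} gives $Z^{\mu_0}\subset f_\gamma(L^+U)$ directly, and \lemref{inverse-image-U_n-lem} gives $f_\gamma^{-1}(Z^{\mu_0})\subset L^+U$ in one line.)

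The genuine gap is the "transport" step, which you correctly identify as the main obstacle but do not carry out, and for which the plan you sketch points in the wrong direction. You know the map at the coarser level, $f_0^{-1}(Z')/U_n\to Z'/U_n$, and you want it at the finer level $H=\varpi^{\sigma}U_n\varpi^{-\sigma}\subseteq U_n$. A soft descent argument cannot go this way: from ``$g\circ f$ smooth and $g$ smooth (even a torsor)'' one cannot conclude that $f$ is smooth, so the only way your scheme works is if the square really is Cartesian --- and that is exactly the assertion you defer as ``an elementary if somewhat fiddly computation.'' It is not elementary: Cartesianness would force the fibres of the finer-level map to be isomorphic to $\bA^{r(\gamma)}$, which is strictly stronger than the statement being proved (the proposition only claims irreducible fibres of dimension $r(\gamma)$, precisely because the affine-space structure is lost at this step). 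The correct arrangement is the opposite one: compare the quotient you want with a \emph{finer} congruence level at which \lemref{f_0-smooth-lem} applies, say $U_{n''}\subseteq H$ (or, as in the paper, compare $f_\gamma^{-1}(Z)/U_{n'}^{-\mu_0}\to Z/U_{n'}^{-\mu_0}$, obtained by conjugating the arc-space statement, with the quotients by $U_n\supseteq U_{n'}^{-\mu_0}$). Then both vertical maps $X/K'\to X/K$ are smooth surjective with irreducible fibres $K/K'$, the composite with the known top map is smooth surjective, smoothness of the bottom map follows because the left vertical is smooth surjective, and irreducibility of its fibres of dimension $r(\gamma)$ follows by the fibration argument (Stacks 037A) --- no Cartesianness needed. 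Reorienting your comparison this way, and dropping the Cartesian claim, closes the gap.
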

\begin{proof}
Let $n_0\ge r(\gamma)$ be a positive integer. In particular we have $n_0\ge\mathrm{val}(\alpha(\gamma)-1)$ for all positive roots $\alpha$. Choose  a dominant regular coweight $\mu_0$ such that 
\[Z^{\mu_0}:=\mathrm{Ad}(\varpi^{\mu_0})(Z)\subset U_{n_0}.\]
Then by Lemma~\ref{inverse-image-U_n-lem} we have 
\[f_\gamma^{-1}(Z^{\mu_0})\subset f_\gamma^{-1}(U_{n_0})\subset U_{n_0-r(\gamma)}\subset L^+U\]
Hence in particular 
\[\mathrm{Ad}(\varpi^{\mu_0})(f_\ga^{-1}(Z))=f_\gamma^{-1}(Z^{\mu_0})=f_0^{-1}(Z^{\mu_0})\]
Moreover, since $Z^{\mu_0}$ is an admissible subset of $L^+U$, $f_0^{-1}(Z^{\mu_0})$ is an admissible subset of $L^+U$ by Lemma~\ref{f_0-smooth-lem}. This shows that $f_\ga^{-1}(Z)$ is admissible.\par 
Let $m>n_0$ be a positive integer such that  $Z^{\mu_0}$ and $f_\gamma^{-1}(Z^{\mu_0})$ are invariant under right multiplication by $U_m$. For all $n\ge m$, since the map $f_\gamma$ commutes with conjugation by $\varpi^{\mu_0}$, $Z$ and $f_\gamma^{-1}(Z)$ are right invariant under the group $U_n^{-\mu_0}:=\varpi^{-\mu_0}U_n\varpi^{\mu_0}$. Then we get the following commutative diagram
\[\xymatrix{
f_\gamma^{-1}(Z)/U_n^{-\mu_0}\ar[r]\ar[d]^{\simeq} & Z/U_n^{-\mu_0}\ar[d]^{\simeq}\\
f_\gamma^{-1}(Z^{\mu_0})/U_n\ar[r] & Z^{\mu_0}/U_n
}\]
where the horizontal arrows are induced by $f_\gamma$ and the vertical arrows are isomorphisms induced by $\mathrm{Ad}(\varpi^{\mu_0})$.\par 
By Lemma~\ref{image-L^+U-lem}, $Z^{\mu_0}\subset U_{n_0}\subset f_\gamma(L^+U)$. Therefore we can apply Lemma~\ref{f_0-smooth-lem} to conclude that the lower horizontal map is surjective smooth whose fibers are isomorphic to $\bA^{r(\gamma)}$. Hence the same is true for the upper horizontal map.\par 
Let $N$ be a positie integer such that for all $n\ge N$, $U_n\supset U_{n'}^{-\mu_0}$ for some $n'\ge m$. Consider the following diagram
\[\xymatrix{
f_\ga^{-1}(Z)/U_{n'}^{-\mu_0}\ar[r]\ar[d] & Z/U_{n'}^{-\mu_0}\ar[d]\\
f_\ga^{-1}(Z)/U_n\ar[r] & Z/U_n
}\]
The two vertical maps are smooth surjective with fibers isomorphic to the irreducible scheme $U_n/U_{n'}^{-\mu_0}$ and the upper horizontal map is smooth surjective with fibers isomorphic to $\bA^{r(\ga)}$ as we have just seen. Hence the lower horizontal map is smooth surjective with irreducible fibers of dimension $r(\ga)$.
\end{proof}

\subsection{A study of $Y_\ga^\la$}

\subsubsection{}\label{Y-gamma-section}
Let $Y_\ga^\la$ be the $k$-functor that associates to any $k$-agebra $R$ the set
\[Y_\ga^\la(R)=\{u\in LU/L^+U (R)| u^{-1}\ga u\in (LG)_\la(R)\}\]
Then $Y_\ga^\la$ is a locally closed sub-indscheme of $X_\ga^\la$ and we also denote its underlying reduced  structure by the same symbol. In particular, its set of $k$-points is
\[Y_\ga^\la(k)=\{u\in U(F)/U(\cO)| u^{-1}\ga u\in G(\cO)\varpi^\la G(\cO)\}.\]
\subsubsection{Relation with MV-cycles}
To understand the structure of $Y_\gamma^\la$, we recall the map \eqref{f-gamma-eq} 
\[f_\gamma: LU\to LU\]
defined by $f_\gamma(u)=u^{-1}\gamma u\gamma^{-1}$. In the following, to ease notation, we let $K:=L^+G$. Then we have
\[Y_\gamma^\lambda=(f_\gamma^{-1}(K \varpi^\lambda K\varpi^{-\mu}\cap LU)/L^+U\]
Recall the Mirkovic-Vilonen cycles in the affine Grassmanian:
\[S_\mu\cap\mathrm{Gr}_\la=(LU\varpi^\mu K\cap K\varpi^\lambda K)/K\]
From this description we get an isomorphism
\begin{equation}\label{MV-cycle-presentation-equation}
\xymatrix@R=1pt{
(LU\cap K\varpi^\lambda K\varpi^{-\mu})/\varpi^\mu L^+U\varpi^{-\mu}\ar[r] &
S_\mu\cap\mathrm{Gr}_\lambda\\
u\ar@{|->}[r] & u\varpi^\mu
}\end{equation}
In summary, we have the following diagram
\[\xymatrix{
f_\gamma^{-1}(K\varpi^\lambda K\varpi^{-\mu}\cap LU)\ar[r]^{f_\gamma}\ar[d] & K\varpi^\lambda K\varpi^{-\mu}\cap LU\ar[d]\\
Y_\gamma^\lambda & S_\mu\cap\mathrm{Gr}_\lambda
}\]
where the left vertical arrow is an $L^+U$-torsor and the right vertical arrow is a torsor under the group $\varpi^\mu L^+U\varpi^{-\mu}$.\par 
\begin{thm}\label{Y-gamma-thm}
 $Y_\ga^\la$ is an equi-dimensional quasi-projective variety of dimension $\langle\rho,\la\rangle+\frac{1}{2}d(\ga)$, where $d(\ga)$ is the discriminant valuation, cf. Definition~\ref{d-ga-defn}. Moreover, the number of irreducible components of $Y_\ga^\la$ equals to $m_{\la\mu}$, the dimension of $\mu$-weight space in the irreducible representation $V_\la$ of $\hat G$ with highest weight $\la$. 
\end{thm}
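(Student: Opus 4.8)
The plan is to transport the Mirković--Vilonen description of $S_\mu\cap\mathrm{Gr}_\la$ to $Y_\ga^\la$ through the correspondence recorded just above, keeping careful track of the dimension shift that results. Write $K=L^+G$, $Z:=K\varpi^\la K\varpi^{-\mu}\cap LU$ and $\Lambda:=\varpi^\mu L^+U\varpi^{-\mu}$. Since $\mu$ is dominant, $\Lambda\subseteq L^+U$ with $L^+U/\Lambda$ of dimension $\langle 2\rho,\mu\rangle$, and from the constructions above
\[ Y_\ga^\la=f_\ga^{-1}(Z)/L^+U,\qquad S_\mu\cap\mathrm{Gr}_\la=Z/\Lambda, \]
together with the map $f_\ga\colon f_\ga^{-1}(Z)\to Z$. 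As input I would use the Mirković--Vilonen theorem: since $\mu\le\la$ with $\mu$ dominant, $S_\mu\cap\mathrm{Gr}_\la$ is a nonempty equidimensional quasi-projective variety of dimension $\langle\rho,\la+\mu\rangle$ with exactly $m_{\la\mu}$ irreducible components. (In particular $Y_\ga^\la$ will come out nonempty, since the maps below are surjective.)

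First I would pass to finite level. Fix $n$ large enough that Proposition~\ref{admissible-set-prop} applies to $Z$ --- so $f_\ga^{-1}(Z)$ is admissible, $Z$ and $f_\ga^{-1}(Z)$ are right $U_n$-invariant, and $\bar f_\ga\colon f_\ga^{-1}(Z)/U_n\to Z/U_n$ is smooth surjective with every fibre isomorphic to $\bA^{r(\ga)}$ --- and large enough that $U_n\subseteq\Lambda$. Then one has the maps
\[ f_\ga^{-1}(Z)/U_n\ \xrightarrow{\ \bar f_\ga\ }\ Z/U_n\ \longrightarrow\ Z/\Lambda\ =\ S_\mu\cap\mathrm{Gr}_\la \]
and $f_\ga^{-1}(Z)/U_n\to Y_\ga^\la$, where: the second arrow of the display is a torsor under the connected unipotent group $\Lambda/U_n$ of dimension $n|\Phi^+|-\langle2\rho,\mu\rangle$; and $f_\ga^{-1}(Z)/U_n\to Y_\ga^\la$ is a torsor under $L^+U/U_n$ (dimension $n|\Phi^+|$) for the free right-translation action. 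Every map here is smooth and surjective with irreducible fibres of constant dimension, hence preserves equidimensionality and induces a bijection on sets of irreducible components.

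Assembling this gives $|\mathrm{Irr}(Y_\ga^\la)|=|\mathrm{Irr}(S_\mu\cap\mathrm{Gr}_\la)|=m_{\la\mu}$ and equidimensionality of $Y_\ga^\la$; and the dimension propagates as
\[ \dim(Z/U_n)=\langle\rho,\la+\mu\rangle+n|\Phi^+|-\langle2\rho,\mu\rangle,\qquad \dim\big(f_\ga^{-1}(Z)/U_n\big)=\langle\rho,\la+\mu\rangle+n|\Phi^+|-\langle2\rho,\mu\rangle+r(\ga), \]
so that
\[ \dim Y_\ga^\la=\dim\big(f_\ga^{-1}(Z)/U_n\big)-n|\Phi^+|=\langle\rho,\la\rangle-\langle\rho,\mu\rangle+r(\ga)=\langle\rho,\la\rangle+\tfrac12 d(\ga) \]
by \eqref{r-gamma-eq}. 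Quasi-projectivity of $Y_\ga^\la$ holds because it is a locally closed sub-indscheme of the ind-projective $\mathrm{Gr}_G$ that is of finite type, being the free quotient of the finite-type scheme $f_\ga^{-1}(Z)/U_n$ by $L^+U/U_n$.

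The point that requires real care is the bookkeeping around $\Lambda=\varpi^\mu L^+U\varpi^{-\mu}$: because $\mu$ is dominant this group sits \emph{inside} $L^+U$ rather than containing it, and that is precisely what supplies the corrective term $-\langle2\rho,\mu\rangle$ --- equivalently $-\langle\rho,\mu\rangle$ after trading $r(\ga)$ for $\tfrac12 d(\ga)$ via \eqref{r-gamma-eq} --- that the dimension formula needs; getting this sign right, and checking that $Z$ is admissible so that Proposition~\ref{admissible-set-prop} applies, is the bulk of the work. The two substantive geometric ingredients --- smoothness of $\bar f_\ga$ with affine-space fibres (Proposition~\ref{admissible-set-prop}) and the Mirković--Vilonen structure of $S_\mu\cap\mathrm{Gr}_\la$ --- are already in hand, so what remains is assembly.
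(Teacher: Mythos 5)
Your proposal is correct and follows essentially the same route as the paper: both apply Proposition~\ref{admissible-set-prop} to the admissible set $Z=K\varpi^\la K\varpi^{-\mu}\cap LU$, pass to a finite level $U_n$, identify $Z$ modulo $\varpi^\mu L^+U\varpi^{-\mu}$ with the MV cycle $S_\mu\cap\mathrm{Gr}_\la$ via \eqref{MV-cycle-presentation-equation}, and transport equidimensionality, the count $m_{\la\mu}$ of irreducible components, and the dimension (with the $-\langle2\rho,\mu\rangle$ correction) back to $Y_\ga^\la$. The bookkeeping, including the final identity $\langle\rho,\la+\mu\rangle-\langle2\rho,\mu\rangle+r(\ga)=\langle\rho,\la\rangle+\tfrac12 d(\ga)$ via \eqref{r-gamma-eq}, matches the paper's computation exactly.
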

\begin{proof}
Apply Proposition~\ref{admissible-set-prop} to the admissible subset $Z=K\varpi^\lambda K\varpi^{-\mu}\cap LU$ of $LU$, we see that there exists a large enough positive integer $n$ such that in the following diagram
\[\xymatrix{
f_\gamma^{-1}(K\varpi^\lambda K\varpi^{-\mu}\cap LU)/U_n\ar[r]^{\bar f_\gamma}\ar[d] & (K\varpi^\lambda K\varpi^{-\mu}\cap LU)/U_n\ar[d]\\
Y_\gamma^\lambda & S_\mu\cap\mathrm{Gr}_\lambda
}\]
\begin{enumerate}
\item All schemes are of finite type;
\item The map $\bar f_\gamma$ induced by $f_\gamma$ is smooth surjective whose geometric fibers are irreducible of dimension $r(\gamma)$, where we recall that $r(\ga)$ is defined in \eqref{r-gamma-eq};
\item $U_n$ is contained in $\varpi^\mu L^+U\varpi^{-\mu}$, hence also $L^+U$;
\item The left vertical map is smooth surjective with fibers isomorphic to the irreducible scheme $L^+U/U_n$;
\item The right vertical map is smooth with fibers isomorphic to the irreducible scheme $\varpi^\mu L^+U\varpi^{-\mu}/U_n$.
\end{enumerate}
Since $Y_\ga^\la$ is of finite type, it is a locally closed subscheme of a closed Schubert variety. In particular, $Y_\ga^\la$ is quasi-projective since closed Schubert varieties are projective.\par 
Since the MV-cycle $S_\mu\cap\mathrm{Gr}_\la$ is equidimensional of dimension $\langle\rho,\la+\mu\rangle$. Hence by (2)-(5) we see that $Y_\ga^\la$ is equidimensional of dimension  
\begin{equation}
\begin{split}
\dim Y_\ga^\la&=\dim(S_\mu\cap\mathrm{Gr}_\la)+\dim\varpi^\mu U(\cO)\varpi^{-\mu}/U_n^{-\lambda_0}+r(\gamma)-\dim U(\cO)/U_n^{-\lambda_0}\\
&=\langle\rho,\la+\mu\rangle-\langle2\rho,\mu\rangle+r(\gamma)=\langle\rho,\la\rangle+\frac{1}{2}d(\gamma)
\end{split}
\end{equation}

Moreover, by \cite[\href{http://stacks.math.columbia.edu/tag/037A}{Tag 037A}]{stacks-project} the 3 maps in the diagram above induces a canonical bijections between set of irreducible components
\[\mathrm{Irr}(Y_\ga^\la)\xrightarrow{\sim}\mathrm{Irr}(S_\mu\cap\mathrm{Gr}_\la).\]
Hence the number of irreducible components of $Y_\ga^\la$ equals to the number of irreducible components of the MV-cycle $S_\mu\cap\mathrm{Gr}_\la$, which is known to be $m_{\la\mu}$.
\end{proof}

\begin{cor}\label{dim-unr-cor}
Recall that $\ga\in\varpi^\mu T(\cO)\cap G^{\mathrm{rs}}(F)$, $\mu\in\Lambda$ and $\mu\le\la$. Then $X_\ga^\la$ is a scheme locally of finite type, equidimensional of dimension 
\[\dim X_\ga^\la=\langle\rho,\la\rangle+\frac{1}{2}d(\ga).\]
Moreover, the number of $G_\ga(F)$-orbits on its set of irreducible component $\mathrm{Irr}(X_\ga^\la)$ equals to $m_{\la\mu}$.
\end{cor}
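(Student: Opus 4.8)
The plan is to deduce everything from Theorem~\ref{Y-gamma-thm} by combining the semi-infinite cell decomposition of $\mathrm{Gr}_G$ with the torus symmetry on $X_\ga^\la$; the corollary is essentially a repackaging of the facts about $Y_\ga^\la$ established there. First I would record that, since $\ga$ lies in $T(F)$ and is regular semisimple, its centralizer $G_\ga$ equals $T$ as an $F$-group, so $G_\ga(F)=T(F)$ acts on $X_\ga^\la$ by left translation; in particular $T(\cO)=L^+T$ is a connected subgroup and each $\varpi^\nu$, $\nu\in\Lambda$, lies in $G_\ga(F)$. Next I would invoke the Iwasawa decomposition into semi-infinite orbits $\mathrm{Gr}_G=\bigsqcup_{\nu\in\Lambda}S_\nu$, with $S_\nu=LU\varpi^\nu L^+G/L^+G$, together with the identification $S_0\cong LU/L^+U$ and the relation $\varpi^\nu\cdot S_0=S_\nu$ (which holds because $\varpi^{-\nu}LU\varpi^\nu=LU$). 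By the definition of $Y_\ga^\la$ in \S\ref{Y-gamma-section} one has $Y_\ga^\la=X_\ga^\la\cap S_0$, so since $\varpi^\nu$ preserves $X_\ga^\la$ we get $X_\ga^\la\cap S_\nu=\varpi^\nu\cdot Y_\ga^\la$, a locally closed subscheme of $\mathrm{Gr}_G$ isomorphic to $Y_\ga^\la$; hence set-theoretically $X_\ga^\la=\bigsqcup_{\nu\in\Lambda}\varpi^\nu Y_\ga^\la$.

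Now I would feed in Theorem~\ref{Y-gamma-thm}: $Y_\ga^\la$ is a quasi-projective variety, equidimensional of dimension $\langle\rho,\la\rangle+\frac{1}{2}d(\ga)$, with exactly $m_{\la\mu}$ irreducible components. Covering $X_\ga^\la$ by the finite-type pieces $\varpi^\nu Y_\ga^\la$ shows it is a scheme locally of finite type (alternatively one may quote Theorem~\ref{finite-type-thm}). For the dimension: any irreducible component $Z$ of $X_\ga^\la$ has its generic point in a unique cell $S_{\nu_0}$, so $Z\cap S_{\nu_0}$ is open and dense in $Z$ and lies inside $\varpi^{\nu_0}Y_\ga^\la$, whence $\dim Z=\dim\varpi^{\nu_0}Y_\ga^\la=\langle\rho,\la\rangle+\frac{1}{2}d(\ga)$; since conversely each component of each $\varpi^\nu Y_\ga^\la$ has this dimension, $X_\ga^\la$ is equidimensional of the stated dimension. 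Moreover, the assignment $W\mapsto\overline{W}$ (closure in $X_\ga^\la$) defines a bijection from $\bigsqcup_{\nu\in\Lambda}\mathrm{Irr}(\varpi^\nu Y_\ga^\la)$ onto $\mathrm{Irr}(X_\ga^\la)$: it is surjective because every component of $X_\ga^\la$ is the closure of a full-dimensional irreducible locally closed subset of some $\varpi^{\nu_0}Y_\ga^\la$, and injective because distinct cells $S_\nu$ are disjoint while the components inside a fixed $\varpi^\nu Y_\ga^\la$ are closed there.

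Finally, the connected group $T(\cO)=L^+T$ fixes each irreducible component, so the $G_\ga(F)=T(F)$-action on $\mathrm{Irr}(X_\ga^\la)$ factors through $\Lambda=T(F)/T(\cO)$, and $\nu'\in\Lambda$ carries $S_\nu$ to $S_{\nu+\nu'}$, hence the block $\mathrm{Irr}(\varpi^\nu Y_\ga^\la)$ to $\mathrm{Irr}(\varpi^{\nu+\nu'}Y_\ga^\la)$; since $\varpi^\nu$ identifies $\mathrm{Irr}(Y_\ga^\la)$ with $\mathrm{Irr}(\varpi^\nu Y_\ga^\la)$, the $\Lambda$-action on $\mathrm{Irr}(X_\ga^\la)$ is free with orbit set canonically $\mathrm{Irr}(Y_\ga^\la)$, so the number of $G_\ga(F)$-orbits on $\mathrm{Irr}(X_\ga^\la)$ is $|\mathrm{Irr}(Y_\ga^\la)|=m_{\la\mu}$. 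The step I expect to be the main obstacle is the assertion that $X_\ga^\la$ is a genuine scheme (not merely an ind-scheme, the transition maps in the tautological ind-structure being closed rather than open immersions): one must argue that the locally closed pieces $\varpi^\nu Y_\ga^\la$ really do glue to a scheme locally of finite type, and it is here that the finite-type input — either directly or through Theorem~\ref{finite-type-thm} — is genuinely needed; the remaining arguments are elementary point-set topology of the semi-infinite stratification layered on top of Theorem~\ref{Y-gamma-thm}.
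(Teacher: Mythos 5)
Your proof is correct and follows essentially the same route as the paper: the paper likewise decomposes $X_\ga^\la$ into the $\Lambda$-translates of $Y_\ga^\la$ cut out by the semi-infinite cells (via the map $(u,\nu)\mapsto u\varpi^\nu$) and reads off the scheme structure, equidimensionality, the dimension formula, and the orbit count from Theorem~\ref{Y-gamma-thm}, with the $G_\ga(F)$-action on $\mathrm{Irr}(X_\ga^\la)$ factoring through $\pi_0(\cP_a)=\Lambda$ and permuting the strata. One caveat: your suggested alternative of quoting Theorem~\ref{finite-type-thm} for the ``scheme locally of finite type'' assertion would be circular, since the proof of that theorem explicitly reduces the general case to the unramified case established in this very corollary; stick with your primary argument that the union of the finite-type pieces $\varpi^\nu Y_\ga^\la$ is locally finite (only finitely many cells $S_\nu$ meet any closed Schubert subvariety) and hence exhibits the scheme structure directly.
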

\begin{proof}
There is a natural morphism
\[\xymatrix@R=1pt{
Y_\gamma^\lambda\times \Lambda\ar[r] & X_\gamma^\la \\
(u,\nu)\ar@{|->}[r] & u\varpi^\nu
}\]
which induces bijection on $k$-points and a stratification of $X_\ga^\la$ such that each strata is isomorphic to $Y_\ga^\la$. Thus $X_\ga^\la$ is a scheme locally of finite type and the assertions about equidimensionality and dimension formula follows from the corresponding statements for $Y_\ga^\la$.\par 
The $LG_\ga$ action on the set $\mathrm{Irr}(X_\ga^\la)$ factors through $\pi_0(\cP_a)=\Lambda$ and hence $LG_\ga$-orbits on $\mathrm{Irr}(X_\ga^\la)$ corresponds bijectively to the set $\mathrm{Irr}(Y_\ga^\la)$. Thus the number of orbits equals to $m_{\la\mu}$. 
\end{proof}

\section{Geometry in general}\label{general-case-section}
In this section we let $\ga\in G(F)^{\mathrm{rs}}$ be any regular semisimple element and $\la\in\Lambda^+$. Assume without loss of generality that $X_\ga^\la$ is nonempty and $\det(\ga)=\det(\varpi^\la)$. Then we get an element $\ga_\la\in V_G^\la(F)$ as in \ref{gamma-lambda-lem}. Moreover, the Newton point of $\ga$ satisfies $\nu_\ga\le_\bQ\la$ and $\chi(\ga)\in\fC_{\le\la}$ by Proposition~\ref{nonempty-prop}. 
\subsection{Finiteness properties}\label{finite-type-section}
 We show in this section that $X_\ga^\la$, a'priori an ind-scheme, is actually a scheme locally of finite type. This has already been proved for unramified conjugacy classes in Corollary~\ref{dim-unr-cor}. It remains to reduce the general case to the unramified case. This reduction step is completely analogous to the Lie algebra case. For the reader's convenience, we include the details. We follow the exposition in \cite[\S 2.5]{Yun15}. See also \citep{Bou15}.
\subsubsection{Splitting $\gamma$}
Let $F'/F$ be a finite extension of degree $e$ so that $\gamma$ splits over $F'$.  Let $\varpi'=\varpi^{1/e}\in F'$ be a uniformizer and $\cO'=k[[\varpi']]$ the ring of integers in $F'$. Let $\sigma$ be a generator of the cyclic group $\mathrm{Gal}(F'/F)$\par 
Choose $h\in G(F')$ such that $\mathrm
{Ad}(h) G_\ga =T$. Then $h\sigma(h)^{-1}\in N_G(T)(F')$ and we let $w\in W$ be its image.\par 
Consider the embedding
\[\xymatrix@R=1pt{
\iota_\ga:\La\ar[r] & G_\ga(F')\\
\mu\ar@{|->}[r] & \mathrm{Ad}(h)^{-1}\varpi^\mu
}\]
Let $\La_\ga:=\iota_\ga^{-1}(G_\ga(F))$. It follows immediately that $\La_\ga\subset\La^w$ where $\La^w$ is the fixed point set of $w$ on $\La$. Moreover, $\La_\ga$ can be identified with the coweight lattice of the maximal $F$-split subtorus of $G_\ga$. In particular, $(\La_\ga)_\bQ=(\La^w)_\bQ$ so that $\La_\ga\subset\La^w$ is a subgroup of finite index. 
\begin{prop}
There exists a closed subscheme $Z\subset X_\ga^\la$ which is projective over $k$ such that $X_\ga^\la=\cup_{\ell\in\La_\ga}\ell\cdot Z$. Here $\ell\in\Lambda_\ga$ acts on $X_\ga^\la$ via the embedding $\iota_\ga$.
\end{prop}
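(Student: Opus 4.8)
The plan is to reduce to the unramified case, already settled in Corollary~\ref{dim-unr-cor}, by base change along $F'/F$; the argument is the group--monoid analogue of the Lie-algebra one, and I would follow the exposition in \cite[\S2.5]{Yun15}. First I would make the base change precise. The inclusion $G(F)/G(\cO)\into G(F')/G(\cO')$ identifies $\mathrm{Gr}_G$ over $F$ with the $\sigma$-fixed points of $\mathrm{Gr}_G$ over $F'$: surjectivity on $k$-points is the vanishing $H^1(\mathrm{Gal}(F'/F),G(\cO'))=1$, by a standard application of Lang's theorem, valid since $k$ is algebraically closed and $G$ is connected. Moreover, for $x\in G(F)$ one has $x\in G(\cO)\varpi^\la G(\cO)$ if and only if $x\in G(\cO')\varpi'^{e\la}G(\cO')$, because the Cartan invariant over $F'$ of an element of $G(F)$ is $e$ times its Cartan invariant over $F$. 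Applying this with $x=g^{-1}\ga g$ shows that the natural map $X_\ga^\la\into X_{\ga,F'}^{e\la}$, where the target is the generalized affine Springer fiber formed over $F'$, is a closed immersion with image the reduced $\sigma$-fixed subscheme $(X_{\ga,F'}^{e\la})^\sigma$.

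Next I would exploit the unramified structure over $F'$. There $\ga$ splits; after conjugation by $h$ it lies in $\varpi'^{e\nu_\ga}T(\cO')$, and $e\nu_\ga\in\La_+$ with $e\nu_\ga\le e\la$ (this uses $\nu_\ga\le_\bQ\la$ from Proposition~\ref{nonempty-prop}, clears denominators, and uses that a non-negative rational combination of simple coroots lying in the coroot lattice has non-negative integral coefficients, since $G_0$ is simply connected). Hence Corollary~\ref{dim-unr-cor}, together with its stratifying morphism $Y_\ga^\la\times\La\to X_\ga^\la$ and the quasi-projectivity of $Y_\ga^\la$ from Theorem~\ref{Y-gamma-thm}, exhibits $X_{\ga,F'}^{e\la}$ as a scheme locally of finite type which is the union of the $\iota_\ga(\La)$-translates (identified up to a fixed finite-index sublattice) of a single quasi-compact open subscheme, these translates being permuted freely by $\iota_\ga(\La)$. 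Now $\sigma$ permutes this family of translates, and since $h\sigma(h)^{-1}\in N_G(T)(F')$ lifts $w$ one computes $\sigma\circ\iota_\ga=\iota_\ga\circ w$, so $\sigma$ acts on the indexing lattice $\La$ through $w$. Consequently a $\sigma$-fixed point can only lie in the translates indexed by $\{\eta\in\La : (1-w)\eta\in B\}$ for a fixed finite set $B\subset\La$ (the finiteness of $B$ accounting for the bounded discrepancy $\sigma$ introduces on the chosen quasi-compact piece), and this set is a union of finitely many cosets of $\ker(1-w)=\La^w$, hence of finitely many cosets of its finite-index subgroup $\La_\ga$. Therefore $X_\ga^\la=(X_{\ga,F'}^{e\la})^\sigma$ is covered by finitely many $\La_\ga$-translates (acting via $\iota_\ga$) of a quasi-compact subscheme $Z_0\subset X_\ga^\la$.

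Finally I would upgrade $Z_0$ to a projective $Z$. Being quasi-compact, $Z_0$ lies in a finite-dimensional projective Schubert subvariety $\mathrm{Gr}_G^{\le n}\subset\mathrm{Gr}_G$; replacing $Z_0$ by $X_\ga^\la\cap\mathrm{Gr}_G^{\le n}$ (still covered by $\La_\ga$-translates, and of finite type over $k$) and passing to its closure inside $X_\ga^\la$ produces a quasi-compact closed subscheme $Z$ with $X_\ga^\la=\bigcup_{\ell\in\La_\ga}\ell\cdot Z$; properness, hence projectivity, of $Z$ over $k$ follows from the ind-projectivity of $\mathrm{Gr}_G$ once one checks that this closure does not leave $X_\ga^\la$. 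As a byproduct, since the translates $\ell\cdot Z$ eventually leave every $\mathrm{Gr}_G^{\le n}$, the cover is locally finite on $X_\ga^\la$, re-proving that $X_\ga^\la$ is a scheme locally of finite type. I expect the main work to be the boundedness assertion of the second paragraph --- tracking the $\sigma$-action on the $\La$-stratification of the unramified model carefully enough to cut the symmetry lattice down from $\La$ to $\La_\ga$; the identification of $X_\ga^\la$ with the $\sigma$-fixed locus and the passage from quasi-compact to projective are routine, modulo the indicated check.
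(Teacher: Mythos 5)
Your overall strategy --- pass to $F'$, use the $\La$-translates of the quasi-projective piece coming from Theorem~\ref{Y-gamma-thm} and Corollary~\ref{dim-unr-cor}, track the $\sigma$-action on the indexing lattice through $w$, and cut down to finitely many cosets of $\La_\ga\subset\La^w$ --- is exactly the paper's (which follows \cite[\S 2.5]{Yun15}). But your opening step contains a false claim. You assert that the image of $X_\ga^\la$ in $X_{\ga,F'}^{e\la}$ is the full reduced $\sigma$-fixed locus, justified by $H^1(\mathrm{Gal}(F'/F),G(\cO'))=1$ ``by Lang's theorem''. Lang's theorem concerns Frobenius over a finite field and says nothing about the cohomology of the ramified cyclic group $\Gamma=\mathrm{Gal}(F'/F)$. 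The vanishing fails already for $G=\bG_m$: from $1\to\cO'^\times\to F'^\times\to\bZ\to 0$ and Hilbert 90 one gets $H^1(\Gamma,\cO'^\times)\cong\bZ/e\bZ$, and correspondingly $(\mathrm{Gr}_{\bG_m,F'})^\sigma=\bZ$ strictly contains $\mathrm{Gr}_{\bG_m,F}=e\bZ$ (the class of $\varpi'$ is $\sigma$-fixed but does not come from $F^\times$). The paper is explicit on this point: it only uses the containment $X_\ga^\la\subset(\widetilde{X}_\ga^{e\la})^\sigma$, remarks that it is strict in general, and repairs the discrepancy at the very end by setting $Z:=(C\cdot\widetilde{Z})\cap X_\ga^\la$.

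Fortunately your argument only ever needs the inclusion $X_\ga^\la\subset(X_{\ga,F'}^{e\la})^\sigma$: your analysis of which $\La$-translates can meet the $\sigma$-fixed locus (the finite set $B$, which is in fact a more careful version of the paper's bare assertion that the fixed locus equals the union of the $\La^w$-translates of $\widetilde{Z}$, an assertion that is delicate precisely because the translates overlap) covers the fixed locus by finitely many $\La_\ga$-cosets of a quasi-compact piece, and intersecting with $X_\ga^\la$ then yields the proposition. So the proof is repairable by deleting the equality claim and the appeal to Lang's theorem. One further caution about your last paragraph: $X_\ga^\la$ is only locally closed in $\mathrm{Gr}_G$, so ``passing to the closure inside $X_\ga^\la$'' does not by itself produce something proper; the paper instead takes the closure of $h^{-1}\widetilde{Y}_{\ga'}^{e\la}$ inside the ambient $\widetilde{X}_\ga^{e\la}$, where projectivity is available, and only afterwards intersects with $X_\ga^\la$. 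You flag this check yourself, but it is exactly where the properness of $Z$ has to be earned.
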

\begin{proof}
We reprase the argument in \cite[\S 2.5.7]{Yun15}. Let $\widetilde{X}_\ga^{e\la}$ be the generalized affine Springer fiber of coweight $e\la$ for $\ga$ in $\mathrm{Gr}_{G_{F'}}$, the affine Grassmanian of $G_{F'}$. Then $\sigma$ acts naturally on $\widetilde{X}_\ga^{e\la}$ and the fixed points sub-indscheme $(\widetilde{X}_\ga^{e\la})^\sigma$ contains $X_\ga^\la$ (but they are not equal in general). 
Let $\ga'=h\ga h^{-1}\in T(F')$ and $\widetilde{X}_{\ga'}^{e\la}$ the corresponding generalized affine Springer fiber in $\mathrm{Gr}_{G_{F'}}$. Then 
\[\widetilde{X}_{\ga'}^{e\la}=h\cdot \widetilde{X}_\ga^{e\la}\]
By Theorem~\ref{Y-gamma-thm},
there is a locally closed subscheme $\widetilde{Y}_{\ga'}^{e\la}$ of $\widetilde{X}_{\ga'}^{e\la}$ such that 
\[\widetilde{X}_{\ga'}^{e\la}=\cup_{\ell\in\La}\ell\cdot Y_{\ga'}^{e\la}.\] 
Let $\widetilde{Z}$ be the closure of $h^{-1}\widetilde{Y}_{\ga'}^{e\la}$ in $\widetilde{X}_\ga^{e\la}$. Then $\widetilde{Z}$ is projective over $k$ and $\widetilde{X}_\ga^{e\la}=\cup_{\ell\in\La}\ell\cdot\widetilde{Z}$.\par 
 Recall that $w\in W$ is represented by $h\sigma(h)^{-1}$. One can check that $\sigma(\widetilde Z)=\widetilde Z$ and more generally $\sigma(\ell\cdot\widetilde{Z})=w(\ell)\cdot\widetilde{Z}$ for all $\ell\in\La$.Consequently,
\[(\widetilde{X}_\ga^{e\la})^\sigma=\cup_{\ell\in\La^w}\ell\cdot\widetilde{Z}=\cup_{\ell\in\La_\ga}\ell\cdot (C\cdot\widetilde{Z})\]
where $C\subset\La^w$ is a finite set of representatives of the quotient $\La^w/\La_\ga$. Hence, $C\cdot\widetilde{Z}$ is a finite type scheme.\par 
Finally let $Z:=(C\cdot\widetilde{Z})\cap X_\ga^\la$. Then $Z$ is a finite type subscheme of $X_\ga^\la$. Hence $Z$ is projective over $k$ and $X_\ga^\la=\cup_{\ell\in\La_\ga}\ell\cdot Z$.
\end{proof}

As a consequence, we immediately get:
\begin{thm}\label{finite-type-thm}
The ind-scheme $X_\ga^\la$ is a finite dimensional $k$-scheme, locally of finite type. Moreover, the lattice $\La_\ga$ acts freely on $X_\ga^\la$ and the quotient $X_\ga^\la/\La_\ga$ is representable by a proper algebraic space over $k$. 
\end{thm}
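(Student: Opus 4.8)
\textit{Proof plan.} The plan is to deduce the Theorem formally from the preceding Proposition, which exhibits $X_\ga^\la$ as the union $\bigcup_{\ell\in\La_\ga}\ell\cdot Z$ of $\La_\ga$-translates of a projective $k$-scheme $Z$ closed in $X_\ga^\la$, where $\ell\in\La_\ga$ acts through the embedding $\iota_\ga\colon\La_\ga\to G_\ga(F)$ and left translation on $\mathrm{Gr}_G$. The one nontrivial geometric input I need is that the $\La_\ga$-action is \emph{properly discontinuous}: only finitely many translates $\ell Z$ meet any given quasi-compact subset of $X_\ga^\la$. Since $Z$ is quasi-compact it lies in a single closed Schubert variety $\mathrm{Gr}_{\le\mu_0}\subset\mathrm{Gr}_G$, so it suffices to show, for each closed Schubert variety $\mathrm{Gr}_{\le\mu}$, that $\iota_\ga(\ell)\cdot\mathrm{Gr}_{\le\mu_0}\cap\mathrm{Gr}_{\le\mu}\neq\emptyset$ for only finitely many $\ell$; by the Cartan decomposition this set of $\ell$ is contained in the set of those for which the Cartan invariant of $\iota_\ga(\ell)$ lies in a bounded region depending only on $\mu_0,\mu$, and since $\iota_\ga(\ell)=\mathrm{Ad}(h)^{-1}\varpi^\ell$ leaves every bounded region of the affine Grassmannian of $G_{F'}$ as $\ell\to\infty$ in $\La_\ga$, only finitely many $\ell$ survive. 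I expect this to be the main (and essentially the only) obstacle; everything else is formal.

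Granting proper discontinuity, I would argue as follows for the first assertion. Given $x\in X_\ga^\la$, choose $\ell_0$ with $x\in\ell_0 Z$ and put $S:=\{\ell\in\La_\ga:\ell Z\cap\ell_0 Z\neq\emptyset\}$, a finite set. Then $\bigcup_{\ell\notin S}\ell Z$ is a locally finite union of closed subschemes of $X_\ga^\la$, hence closed, and it does not contain $x$; therefore its complement is an open neighbourhood of $x$ contained in the quasi-compact finite-type scheme $\bigcup_{\ell\in S}\ell Z$. Thus $X_\ga^\la$ admits a Zariski open cover by finite-type $k$-schemes, so it is a $k$-scheme locally of finite type, and it is finite dimensional because $\dim X_\ga^\la=\dim Z<\infty$.

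For the remaining assertions, first note the $\La_\ga$-action is free: the stabilizer of $gL^+G$ consists of those $\ell$ with $g^{-1}\iota_\ga(\ell)g\in G(\cO)$, and since $\iota_\ga(\ell)=\mathrm{Ad}(h)^{-1}\varpi^\ell$ generates an unbounded subgroup of $G(F')$ for $\ell\neq0$ while conjugation preserves boundedness, this forces $\ell=0$. Combining freeness with proper discontinuity, the fppf quotient $Y:=X_\ga^\la/\La_\ga$ is an algebraic space, $\pi\colon X_\ga^\la\to Y$ is an \'etale $\La_\ga$-torsor, and $Y$ is separated because the action is proper and $X_\ga^\la$ is separated (being locally closed in the ind-proper $\mathrm{Gr}_G$). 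Finally, the composite $Z\hookrightarrow X_\ga^\la\xrightarrow{\pi}Y$ is surjective (since $X_\ga^\la=\La_\ga\cdot Z$) and proper (a $k$-morphism from the proper $k$-scheme $Z$ to the separated algebraic space $Y$ is automatically proper); hence, by descent of properness along this surjective proper morphism, $Y$ is a proper algebraic space over $k$, which completes the proof.
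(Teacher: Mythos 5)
Your proposal is correct and follows the same route as the paper, which states the theorem as an immediate consequence of the preceding Proposition ($X_\ga^\la=\bigcup_{\ell\in\La_\ga}\ell\cdot Z$ with $Z$ projective) without writing out the deduction. You have simply supplied the standard details the paper leaves implicit — proper discontinuity of the $\La_\ga$-translates via unboundedness of the Cartan invariants of $\iota_\ga(\ell)$, freeness by the same boundedness argument, and the quotient of the resulting free \'etale equivalence relation — all of which are sound.
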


\subsection{Dimension of the regular locus}
Recall that the regular locus $X_\ga^{\la,\mathrm{reg}}$ is an open subscheme of $X_\ga^\la$ on which the action of $\cP_a=LG_\ga/L^+J_a$ is free (but not necessarily transitive). 
\begin{thm}\label{dim-reg-locus-thm}
\[\dim\cP_a=\dim X_\ga^{\la,\mathrm{reg}}=\langle\rho,\la\rangle+\frac{d(\ga)-c(\ga)}{2}\]
where 
\begin{itemize}
\item $d(\ga):=\mathrm{val}(\det(\mathrm{Id}-\mathrm{ad}(\ga):\mathfrak{g}(F)/\mathrm{g}_\ga(F)\to \mathfrak{g}(F)/\mathrm{g}_\ga(F)))$.
\item $c(\ga):=\mathrm{rank}(G)-\mathrm{rank}_FG_\ga$, where $\mathrm{rank}_FG_\ga$ is the dimension of the maximal $F$-split subtorus of $G_\ga$.
\end{itemize}
Moreover, $X_\ga^{\la,\mathrm{reg}}$ is equidimensional.
\end{thm}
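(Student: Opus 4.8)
The plan is to reduce the whole statement to a single dimension computation for the symmetry group $\cP_a$, and then to carry out that computation at the level of Lie algebras, where it becomes a Bezrukavnikov-type length formula for the cameral cover of $\spec\cO$ attached to $a=\chi_\la(\ga_\la)$.

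First I would dispose of the equidimensionality and of the equality $\dim X_\ga^{\la,\mathrm{reg}}=\dim\cP_a$. By the open cover \eqref{X-ga-reg-open-cover-eq} we have $X_\ga^{\la,\mathrm{reg}}=\bigcup_{w\in\mathrm{Cox}(W,S)}X_\ga^{\la,w}$, and by \propref{X-ga-w-torsor-prop} each $X_\ga^{\la,w}$ is a torsor under $\cP_a$; it is nonempty for every $w$ (the construction in the proof of \propref{nonempty-prop} produces a point of $X_\ga^{\la,w}$, since $\epsilon_\la^w(a)\in V_G^{\la,w}$) and finite-dimensional, being an open subscheme of the finite-dimensional scheme $X_\ga^\la$ of \thmref{finite-type-thm}. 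A group scheme locally of finite type over $k$ is equidimensional, hence so is any torsor under it, of the same dimension; and a scheme admitting a finite open cover by equidimensional schemes of a common dimension $d$ is itself equidimensional of dimension $d$. Therefore $X_\ga^{\la,\mathrm{reg}}$ is equidimensional with $\dim X_\ga^{\la,\mathrm{reg}}=\dim\cP_a$, and it remains to prove
\[
\dim\cP_a=\langle\rho,\la\rangle+\tfrac{1}{2}\bigl(d(\ga)-c(\ga)\bigr).
\]

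Next I would turn $\dim\cP_a$ into a length. Writing $J_a^\flat$ for the finite type Néron model of $J_a$, the subgroup $L^+J_a^\flat/L^+J_a$ is open in $\cP_a=LG_\ga/L^+J_a$ (proof of Corollary~\ref{Lie-P_a-cor}), and the quotient $LG_\ga/L^+J_a^\flat$ --- the group of $F$-points of the torus $G_\ga$ modulo its maximal bounded subgroup --- is a discrete group, so $\dim\cP_a=\dim\bigl(L^+J_a^\flat/L^+J_a\bigr)$. Since $\cP_a$ is smooth (by the argument of \cite[\S3.8]{Ngo10}, using $\mathrm{char}\,k\nmid|W|$), Corollary~\ref{Lie-P_a-cor} gives
\[
\dim\cP_a=\dim_k\mathrm{Lie}(\cP_a)=\dim_k\bigl(\ft\otimes_k(A^\flat/A)\bigr)^W,
\]
where $A$ is the coordinate ring of the cameral cover of \eqref{cameral-cover-diagram-eq}, pulled back from $q_\la\colon V_T^\la\to\fC_+^\la$, and $A^\flat$ is its normalization.

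The heart of the argument, and the step I expect to be the main obstacle, is then the length formula
\[
\dim_k\bigl(\ft\otimes_k A^\flat/A\bigr)^W=\tfrac{1}{2}\bigl(\mathrm{val}(\fD_+^\la(a))-c(\ga)\bigr).
\]
This quantity depends only on the cameral cover $X_a\to\spec\cO$, which is finite flat of degree $|W|$, generically a $W$-torsor, and ramified exactly along the divisor cut out by the enhanced discriminant $\fD_+^\la=(-w_0(\la))^*\fD_+$ (cf.~\ref{C-plus-la-section}); the computation of such a length in terms of the valuation of the discriminant of the cover and the defect of the centralizer is precisely Bezrukavnikov's local computation \cite{Be96}, reworked in \cite[\S3.7]{Ngo10}. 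I would run that argument with $\fD$ replaced by $\fD_+$ throughout, checking that the defect term is still $c(\ga)=\mathrm{rank}(G)-\mathrm{rank}_FG_\ga$ because the generic fiber of $J_a$ is the $F$-torus $G_\ga$, and that the Néron-model bookkeeping is unaffected by passing from $\fD$ to $\fD_+$. Finally, since $\fD_+(t_1,t_2)=2\rho(t_1)\fD(t_2)$ and the $T_0$-component of $a$ differs from $\chi(\ga)$ by the factor $\varpi^{-w_0(\bar\la)}$, a direct valuation computation (using $-w_0(2\rho)=2\rho$ and \eqref{d-ga-eq}) gives $\mathrm{val}(\fD_+^\la(a))=\langle 2\rho,\la\rangle+d(\ga)$; combining the last two displays yields the desired formula. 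As a consistency check, when $\ga$ is unramified one has $c(\ga)=0$ and recovers the value $\langle\rho,\la\rangle+\tfrac12 d(\ga)$ of Corollary~\ref{dim-unr-cor}.
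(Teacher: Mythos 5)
Your treatment of the equidimensionality, of the equality $\dim X_\ga^{\la,\mathrm{reg}}=\dim\cP_a$, and of the reduction to $\dim_k\bigl(\ft\otimes_k(A^\flat/A)\bigr)^W$ via Corollary~\ref{Lie-P_a-cor} is correct and coincides with the paper's. The valuation identity $\mathrm{val}(\fD_+^\la(a))=\langle2\rho,\la\rangle+d(\ga)$ is also right (it is equation~\eqref{d-la-eq} of the paper), so the length formula you are aiming for, $\dim_k(\ft\otimes_k A^\flat/A)^W=\tfrac12(\mathrm{val}(\fD_+^\la(a))-c(\ga))$, is indeed equivalent to the theorem. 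The problem is that you have located the entire content of the proof in this one identity and then not proved it: ``run Bezrukavnikov's computation with $\fD$ replaced by $\fD_+$'' is a plan, not an argument. The Lie-algebra computation of \cite{Be96} and \cite[\S3.7]{Ngo10} rests on an explicit description of $A$ inside its normalization for the cameral cover of $\ft\to\fc$; for the monoid cameral cover $V_T^\la\to\fC_+^\la$ the ring $A$ acquires genuinely new degeneration along the boundary of $V_T^\la$ (this is exactly what the factor $2\rho(t_1)$ in $\fD_+$ records), and verifying that the boundary contributes precisely $\langle\rho,\la\rangle$ to the length is the hard part. You flag this step as ``the main obstacle'' and then defer it, so as written there is a gap.

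It is worth seeing how the paper avoids this. It never proves the length formula by a direct discriminant computation on the cameral cover. Instead it first establishes the \emph{split} case of the theorem by entirely geometric means: Theorem~\ref{Y-gamma-thm} computes $\dim Y_\ga^\la$ by fibering over Mirkovi\'c--Vilonen cycles, Corollary~\ref{dim-unr-cor} deduces $\dim X_\ga^\la=\langle\rho,\la\rangle+\tfrac12 d(\ga)$ for split $\ga$, and openness of $X_\ga^{\la,\mathrm{reg}}$ gives $\dim\cP_a$ in that case. The ramified case is then reduced to the split case over the splitting field $\tilde F$: one compares $(\ft\otimes_k(A^\flat/A))^W$ with its analogue over $\tilde\cO$, using $\tilde A^\flat\cong\tilde\cO[W]$ and the eigenspace decomposition of $\ft$ under the element $w_\ga\in W$ (together with a $W$-invariant form pairing $\ft(i)$ with $\ft(e-i)$) to produce the correction term $c(\ga)$. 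In other words, the split-case geometric input of Corollary~\ref{dim-unr-cor} is an essential ingredient of the paper's proof --- you invoke it only as a consistency check at the end, which is the clearest sign that your route is missing its base case. To salvage your approach you would either have to carry out the boundary analysis of the monoid cameral cover in the split case by hand, or do what the paper does and import the split case from \S\ref{unr-section}.
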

\begin{proof}
The first equality follows from the fact that the $\cP_a$-orbits in $X_\ga^{\la,\mathrm{reg}}$ are open and the action is free.\par 
When $\ga$ is unramified (hence split as $k$ is algebraically closed), the second equality follows from Corollary~\ref{dim-unr-cor}. It remains to reduce to this case. The argument is similar to that of Bezrukavnikov's in Lie algebra case, cf. \cite{Be96}, which we reformulate using the Galois description of universal centralizer.\par 
Let $A$ be the finite free $\cO$-algebra defined by the Cartesian diagram \eqref{cameral-cover-diagram-eq} and $A^\flat$ the normalization of $A$. Then $W$ acts naturally on the $\cO$-algebras $A$ and $A^\flat$ and by \ref{Lie-P_a-cor}, we get
\[\dim\cP_a=\dim_k(\ft\otimes_k (A^\flat/A))^W.\]
Let $\tilde F/F$ be a ramified extension of degree $e$, with ring of integers $\tilde\cO=k[[\varpi^{\frac{1}{e}}]]$, such that $\ga$ is split over $\tilde F$. Let $\sigma$ be a generater of the cyclic group $\Gamma:=\mathrm{Gal}(F'/F)$. Let $\tilde A:=A\otimes_\cO\tilde\cO$ and $\tilde A^\flat$ its normalization. We remark that $\tilde A^\flat$ is not the same as $A^\flat\otimes_\cO\tilde\cO$ in general.  Let $\tilde\cP_a=LG_{\ga,F'}/L^+J_{a,F'}$. Then by the dimension formula in split case, we have
\[\dim_k(\ft\otimes_k\tilde A^\flat/\tilde A)^W=\dim\tilde\cP_a=\langle\rho,e\la\rangle+\frac{1}{2}e\cdot d(\ga).\]
As $\ga$ split over $\tilde\cO$, we have 
\[\tilde A^\flat\cong\tilde\cO[W]:=\tilde\cO\otimes_k k[W]\] as $W$-module. Here $W$ acts on $\tilde\cO[W]$ via right regular representation. Moreover, there exists an element $w_\ga\in W$ of order $e$ such that under the above isomorphism, the natural action of $\sigma\in\Gamma$ on $\tilde A^\flat$ becomes $\sigma\otimes l_{w_\ga}$ where $l_{w_\ga}$ denotes the left regular action of $w_\ga$ on $k[W]$. In particular, the action of $W$ and $\Gamma$ commutes with each other. With these considerations, we obtain an isomorphism
\[(\ft\otimes_k\tilde A^\flat)^W\cong\ft\otimes_k\tilde\cO\]
which intertwines the action of $\sigma\in\Gamma$ on the left hand side with the action of $w\otimes\sigma$ on the right hand side. \par 
Moreover, we have an equality
\[(\ft\otimes_k \tilde{A}^\flat)^\Gamma=\ft\otimes_k A^\flat\]
which remains true after taking $W$-invariants since the $\Gamma$ action commutes with $W$ action. In particular, we have
\[M:=(\ft\otimes_k\tilde\cO)^\Gamma=(\ft\otimes_k A^\flat)^W\]
Moreover, it is clear that from the definition of $W$ action that
\[(\ft\otimes_k\tilde A)^W=(\ft\otimes_k A)^W \otimes_\cO \tilde\cO.\]
Thus we get
\begin{equation}\label{P-a-ramified-case-calculation-eq}
\begin{split}
\dim\cP_a & =\dim_k(\ft\otimes_k A^\flat/A)^W=
\frac{1}{e}\dim_k(\ft\otimes_k (A^\flat/A)\otimes_\cO \tilde\cO)^W=\frac{1}{e}\dim_k \left(\frac{M\otimes_\cO \tilde \cO}{(\ft\otimes_k\tilde A)^W}\right)\\
&=\langle\rho,\la\rangle+\frac{1}{2}d(\ga)-\frac{1}{e}\dim_k\left(\frac{\ft\otimes_k\tilde\cO}{M\otimes_\cO\tilde\cO}\right)
\end{split}
\end{equation}
Since the element $w_\ga\in W$ has order $e$, its eigenvalues are $e$-th roots of unit. Let $\zeta$ be a primitive $e$-th root of unit and $\ft(i)$ the subspace of $\ft$ on which $w_\ga$ acts via the scalar $\zeta^i$. In particular, $\ft(0)=\ft^{w_\ga}$ is the $w_\ga$ invariant subspace. Then we have
\[M:=(\ft\otimes_k\tilde\cO)^\Gamma=\bigoplus_{i=0}^{e-1}\ft(i)\otimes_k \varpi^{\frac{e-i}{e}}\] 
The existence of a $W$-invariant nondegenerate symmetric bilinear form on $\ft$ gaurantees that $\dim_k\ft(i)=\dim_k\ft(e-i)$, from this we obtain that
\[\dim_k\left(\frac{\ft\otimes_k\tilde\cO}{M\otimes_\cO \tilde\cO} \right)=e(\dim_k\ft-\dim_k\ft^{w_\ga})= e\cdot c(\ga)\]
Combined with \eqref{P-a-ramified-case-calculation-eq}, we obtain
\[\dim\cP_a=\langle\rho,\la\rangle+\frac{1}{2}(d(\ga)-c(\ga)).\]
Finally, $X_\ga^{\la,\mathrm{reg}}$ is equidimensional since it is a finite union of $\cP_a$-torsors.
\end{proof} 

\subsection{Some 0-dimensional generalized affine Springer fibers}
We study an important class of $X_\ga^\la$ that are 0 dimensional. The results in this subsection will be used in \citep{BC17}. We keep the assumptions at the beginning of \S\ref{general-case-section}.
 
\subsubsection{$\la$-twisted discriminant valuation}
Let $a:=\chi_\la(\ga_\la)\in\fC_+^\la$ and view it as a map $a:\spec\cO\to\fC_+^\la$. Recall the discriminant divisor $\fD_\la\subset\fC_+^\la$ defined in \S\ref{C-plus-la-section}. We define the \emph{$\la$-twisted discriminant valuation} to be
\[d_\la(a):=\mathrm{val}(a^*\fD_\la)\in\bZ\] 
Combining the definition of $\fD_\la$ with Lemma~\ref{gamma-lambda-lem} and equation \eqref{d-ga-newton-eq} we get
\begin{equation}\label{d-la-eq}
\begin{split}
d_\la(a)&=2\cdot\mathrm{val}(\rho(\alpha(\ga_\la)))+d(\ga)\\
&=\langle2\rho,\la\rangle+d(\ga)\\
&=\sum_{\substack{\alpha\in\Phi\\ \langle\alpha,\nu_\ga\rangle=0}}\mathrm{val}(\alpha(\ga)-1)+\langle 2\rho, \la-\nu_\ga\rangle
\end{split}
\end{equation}

\begin{prop}
Suppose $d_\la(a)=0$. Then $\ga$ is unramified (or split) and $\mathrm{dim}X_\ga^\la=0$. Moreover, $X_\ga^\la=X_\ga^{\la,\mathrm{reg}}$ and it is a torsor under $\cP_a$.
\end{prop}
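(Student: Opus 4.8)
The plan is to read off from the last line of \eqref{d-la-eq} two separate non-negative contributions to $d_\la(a)$, conclude that each of them vanishes, and then feed the resulting equalities into the unramified-case results of \S\ref{unr-section}. For the first step, I would note that both summands in the last expression of \eqref{d-la-eq} are non-negative: in the normalization used to derive that formula (conjugate $\ga$ over $\bar F$ so that $\mathrm{val}(\alpha(\ga))\ge0$ for $\alpha\in\Phi^+$, as in Lemma~\ref{d-ga-newton-lem}) one has $\mathrm{val}(\alpha(\ga))=\langle\alpha,\nu_\ga\rangle=0$ for every root $\alpha$ with $\langle\alpha,\nu_\ga\rangle=0$, whence $\mathrm{val}(\alpha(\ga)-1)\ge0$; and $\la-\nu_\ga$ is a non-negative $\bQ$-combination of simple coroots by Proposition~\ref{nonempty-prop} (recall $X_\ga^\la$ is assumed nonempty), each simple coroot pairing with $2\rho$ to a positive integer, so $\langle2\rho,\la-\nu_\ga\rangle\ge0$. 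Hence $d_\la(a)=0$ forces $\langle2\rho,\la-\nu_\ga\rangle=0$, i.e. $\la=\nu_\ga$ — in particular $\nu_\ga$ is dominant and integral — and, from the middle line of \eqref{d-la-eq}, $d(\ga)=d_\la(a)-\langle2\rho,\la\rangle=-\langle2\rho,\la\rangle$.

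Next I would deduce that $\ga$ is unramified, i.e. $c(\ga)=0$. Since $X_\ga^\la$ is nonempty, so is the regular locus $X_\ga^{\la,\mathrm{reg}}$ — it contains the open subset $X_\ga^{\la,w}$, which is nonempty by the explicit point constructed in the proof of the implication $(3)\Rightarrow(1)$ of Proposition~\ref{nonempty-prop} — so $\dim X_\ga^{\la,\mathrm{reg}}\ge0$. By Theorem~\ref{dim-reg-locus-thm} this reads $\langle\rho,\la\rangle+\frac{1}{2}\bigl(d(\ga)-c(\ga)\bigr)\ge0$, equivalently $0\le c(\ga)\le d(\ga)+\langle2\rho,\la\rangle=d_\la(a)=0$. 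Thus $c(\ga)=0$; as $G_\ga$ is a maximal torus of $G$ defined over $F$ and $k$ is algebraically closed, $G_\ga$ is then $F$-split, i.e. $\ga$ is unramified. I expect this squeeze $0\le c(\ga)\le d_\la(a)$ to be the main point of the argument: one must combine the cheap but essential bound $\dim X_\ga^{\la,\mathrm{reg}}\ge0$ with the dimension formula of Theorem~\ref{dim-reg-locus-thm}, while taking care that $X_\ga^{\la,\mathrm{reg}}$ is genuinely nonempty.

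Finally I would apply the unramified results with $\mu:=\nu_\ga=\la$. Corollary~\ref{dim-unr-cor} gives $\dim X_\ga^\la=\langle\rho,\la\rangle+\frac{1}{2}d(\ga)=\langle\rho,\la\rangle-\langle\rho,\la\rangle=0$, and that the number of $G_\ga(F)$-orbits on $\mathrm{Irr}(X_\ga^\la)$ equals $m_{\la\mu}=m_{\la\la}=1$, since the highest weight occurs with multiplicity one in $V_\la$. Because $X_\ga^\la$ is reduced, $0$-dimensional and locally of finite type over the algebraically closed field $k$, it is a disjoint union of reduced points, so $\mathrm{Irr}(X_\ga^\la)$ is identified with $X_\ga^\la(k)$; since the $LG_\ga$-action factors through $\cP_a$, this point set is a single $\cP_a$-orbit. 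Now fix any $w\in\mathrm{Cox}(W,S)$: the open subscheme $X_\ga^{\la,w}\subseteq X_\ga^{\la,\mathrm{reg}}\subseteq X_\ga^\la$ is a nonempty $\cP_a$-torsor by Proposition~\ref{X-ga-w-torsor-prop}, hence a nonempty $\cP_a$-stable subset of the single orbit $X_\ga^\la(k)$; therefore $X_\ga^{\la,w}(k)=X_\ga^\la(k)$, and by reducedness $X_\ga^{\la,w}=X_\ga^{\la,\mathrm{reg}}=X_\ga^\la$. In particular $X_\ga^\la=X_\ga^{\la,\mathrm{reg}}$, and it is a $\cP_a$-torsor.
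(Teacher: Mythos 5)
Your argument is correct, and its first and last steps coincide with the paper's: you split the last line of \eqref{d-la-eq} into two non-negative terms, force both to vanish, conclude $\la=\nu_\ga$ and $d(\ga)=-\langle2\rho,\la\rangle$, and at the end invoke Corollary~\ref{dim-unr-cor} with $m_{\la\la}=1$ (your explicit identification of $\mathrm{Irr}(X_\ga^\la)$ with $X_\ga^\la(k)$ and the comparison with the torsor $X_\ga^{\la,w}$ is a careful unpacking of what the paper asserts in one line). Where you genuinely diverge is the middle step, proving that $\ga$ is unramified. The paper does this structurally: since $\la=\nu_\ga$, the element $\ga$ lies in the Levi $L=\mathrm{Cent}(\la)$ by \citep[Corollary 2.4]{KoV}, can be written as $\varpi^\la\ga'$ with $\ga'\in L_0(\cO)$, and the vanishing of the first term of \eqref{d-la-eq} says $d_L(\ga')=0$, so the cameral cover of $L_0$ is \'etale (hence trivial, as $k$ is algebraically closed) over $\chi_{L_0}(\ga')$, forcing $\ga'$ and hence $\ga$ to be split. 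You instead squeeze $0\le c(\ga)\le d(\ga)+\langle2\rho,\la\rangle=d_\la(a)=0$ using Theorem~\ref{dim-reg-locus-thm} together with $\dim\cP_a\ge0$ (or the nonemptiness of $X_\ga^{\la,\mathrm{reg}}$, which you rightly justify via the point constructed in $(3)\Rightarrow(1)$ of Proposition~\ref{nonempty-prop}). This is legitimate and non-circular, since Theorem~\ref{dim-reg-locus-thm} is established before and independently of this proposition; it is shorter given what is already proved, but it leans on the full strength of the ramified dimension formula (whose proof involves the Galois computation with $A^\flat$), whereas the paper's route is elementary and additionally yields the explicit normal form $\ga=\varpi^\la\ga'$ with $\ga'\in T(\cO)\cap L_0(\cO)$ up to conjugacy.
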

\begin{proof}
By assumption $\nu_\ga\le_\bQ\la$, so the two terms in \ref{d-la-eq} are both non-negative. The condition $d_\la(a)=0$ then implies that the two terms are both $0$.\par 
In particular, $\nu_\ga=\la$. Let $I(\la)=\{\alpha\in\Delta, \langle\alpha,\la\rangle=0\}$ and $L$ be the Levi subgroup of $G$ whose root system has simple reflections $I(\la)$. Then $L$ is the centralizer of the coweight $\la=\nu_\ga\in\La^+$ and hence $\ga\in L(F)$ by \citep[Corollary 2.4]{KoV}.   Apply Proposition~\ref{nonempty-prop} to the group $L$, we see that $\ga\in\varpi^\la L_0(\cO)$ where $L_0$ is the derived group of $L$. Write $\ga=\varpi^\la\ga'$ with $\ga'\in L_0(\cO)$. Let $d_L(\cdot)$ be the discriminant valuation for the group $L$. Then by \eqref{d-ga-eq} and the vanishing of $d_\la(a)$, we have
\[d_L(\ga')=d_L(\ga)=\sum_{\substack{\alpha\in\Phi\\ \langle\alpha,\la\rangle=0}}\mathrm{val}(\alpha(\ga')-1)=0\]
Let $a':=\chi_{L_0}(\ga')\in\fC_{L_0}(\cO)$. Consider the following Cartesian diagram
\[\xymatrix{
\chi_{L_0}^{-1}(a')\ar[r]\ar[d] & T\cap L_0\ar[d]^{q_{L_0}}\\
\spec\cO\ar[r]^{a'} & \fC_{L_0}
}\]
As $d_L(\ga')=0$, the image of $a'$ lands in $\fC_{L_0}^{\mathrm{rs}}$ over which $q_{L_0}$ is \'etale (recall that $L_0$ is simply connected since $G_0$ is). Hence $\chi_{L_0}^{-1}(a')$ is a trivial \'etale cover of $\spec\cO$. This implies that $\ga'$ is $L_0(F)$-conjugate to an element in $T(\cO)\cap L_0(\cO)$ and hence in particular $\ga$ is unramified in $G(F)$.\par 
Finally we apply Corollary~\ref{dim-unr-cor} to the unramified conjugacy class $\ga$ with $\nu_\ga=\la$. We see that $\dim X_\ga^\la=0$ and $X_\ga^\la$ is a $\cP_a$-torsor since the weight multiplicity $m_{\la\la}=1$. In particular $X_\ga^\la= X_\ga^{\la,\mathrm{reg}}$.
\end{proof}

\subsection{Irreducible components}\label{irr-components-conjecture-section}
\subsubsection{Stratification of dominant weight cone}
Recall from ~\ref{Newton-point-section} that for any $\ga\in G(F)^{\mathrm{rs}}$, we defined its Newton point 
\[\nu_\ga\in \La_{0,\bQ}^+\times X_*(G_{\mathrm{ab}})\subset\La_\bQ^+\]
where $\La_{0,\bQ}^+$ is the dominant coweight cone in $\La_{0,\bQ}:=\La_0\otimes_\bZ\bQ$ and $\La_\bQ^+=\La_{0,\bQ}^+\times X_*(G_{\mathrm{ab}})_\bQ$.\par 

Let $\sfD\subset\La_{0,\bQ}$ be the positive coroot cone. In other words, $\sfD$ consists of $\bQ$-linear combinations of simple coroots with non-negative coefficients. Note that $\La_{0,\bQ}^+\subset\sfD$. \par
For $\la\in\La^+$, we define the \emph{dominant weight polytope} to be:
\[\sfP_\la:=\La_\bQ^+\cap\mathrm{Conv}(W\cdot\la)=\La_\bQ^+\cap (\la-\sfD).\]
where $\mathrm{Conv}(W\cdot\la)$ denotes the convex hull of the $W$-orbit of $\la$. 
\begin{lem}\label{polytope-intersection-lem}
For each $\la_1,\la_2\in\La^+$ with $\det(\varpi^{\la_1})=\det(\varpi^{\la_2})$, there exists $\mu\in\La^+$ such that $\mu\le\la_1,\mu\le\la_2$ and 
\[(\la_1-\sfD)\cap (\la_2-\sfD)=\mu-\sfD.\]
In particular, we have $\sfP_{\la_1}\cap\sfP_{\la_2}=\sfP_\mu$.
\end{lem}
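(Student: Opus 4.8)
The plan is to exhibit $\mu$ directly as the coordinatewise minimum of $\la_1$ and $\la_2$ in the basis of simple coroots, and then to check in turn that $\mu\le\la_1$ and $\mu\le\la_2$, that $(\la_1-\sfD)\cap(\la_2-\sfD)=\mu-\sfD$, and that $\mu$ is an integral dominant coweight. Granting these, the final assertion is immediate: intersecting the identity $(\la_1-\sfD)\cap(\la_2-\sfD)=\mu-\sfD$ with $\La_\bQ^+$ and using $\sfP_\nu=\La_\bQ^+\cap(\nu-\sfD)$ for every dominant $\nu$ (as recalled above) gives $\sfP_{\la_1}\cap\sfP_{\la_2}=\La_\bQ^+\cap(\mu-\sfD)=\sfP_\mu$.

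First I would unwind the hypothesis $\det(\varpi^{\la_1})=\det(\varpi^{\la_2})$. Since $\det(\varpi^\la)$ depends only on the image of $\la$ in $X_*(G_{\mathrm{ab}})=X_*(T/T_0)$ and $\nu\mapsto\varpi^\nu$ is injective on cocharacter lattices, this forces $\la_1-\la_2\in\ker(X_*(T)\to X_*(T/T_0))=X_*(T_0)=\La_0$. As $G_0$ is simply connected, $\La_0$ is the coroot lattice, with $\bZ$-basis the simple coroots $\alpha_1^\vee,\dots,\alpha_r^\vee$; write $\la_1-\la_2=\sum_ic_i\alpha_i^\vee$ with $c_i\in\bZ$. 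Set $p_i:=\max(0,c_i)$ and $q_i:=\max(0,-c_i)$, so $p_i,q_i\in\bZ_{\ge0}$, $p_i-q_i=c_i$, and $p_iq_i=0$ for each $i$; and define
\[\mu:=\la_1-\sum_ip_i\alpha_i^\vee=\la_2-\sum_iq_i\alpha_i^\vee\in\La.\]
Then $\la_1-\mu$ and $\la_2-\mu$ are non-negative integral combinations of simple coroots, so $\mu\le\la_1$ and $\mu\le\la_2$, and since $\sfD+\sfD=\sfD$ we get $\mu-\sfD\subseteq(\la_1-\sfD)\cap(\la_2-\sfD)$.

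For the reverse inclusion, take $x\in(\la_1-\sfD)\cap(\la_2-\sfD)$, so $\la_1-x$ and $\la_2-x$ both lie in $\sfD\subseteq\La_{0,\bQ}$; write $\la_1-x=\sum_ix_i\alpha_i^\vee$ and $\la_2-x=\sum_iy_i\alpha_i^\vee$ with all $x_i,y_i\ge0$. Subtracting and comparing coefficients in the $\bQ$-basis of simple coroots gives $x_i-y_i=c_i$, whence $x_i=y_i+c_i\ge\max(0,c_i)=p_i$. Therefore $\la_1-x\in\sum_ip_i\alpha_i^\vee+\sfD$, i.e. $x\in\mu-\sfD$, which proves $(\la_1-\sfD)\cap(\la_2-\sfD)=\mu-\sfD$.

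The only step requiring an idea is that $\mu$ is dominant, i.e. $\langle\alpha_i,\mu\rangle\ge0$ for each simple root $\alpha_i$. Fix $i$ and write $A_{ij}:=\langle\alpha_i,\alpha_j^\vee\rangle$ for the Cartan integers, so $A_{ij}\le0$ whenever $j\ne i$. If $p_i=0$, then $\langle\alpha_i,\mu\rangle=\langle\alpha_i,\la_1\rangle-\sum_jp_jA_{ij}=\langle\alpha_i,\la_1\rangle-\sum_{j\ne i}p_jA_{ij}\ge0$, since $\langle\alpha_i,\la_1\rangle\ge0$ by dominance of $\la_1$ and each term $p_jA_{ij}$ with $j\ne i$ is $\le0$. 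If $p_i>0$, then $q_i=0$ because $p_iq_i=0$, and the identical computation with $(\la_1,p_j)$ replaced by $(\la_2,q_j)$ gives $\langle\alpha_i,\mu\rangle=\langle\alpha_i,\la_2\rangle-\sum_{j\ne i}q_jA_{ij}\ge0$. Hence $\mu\in\La^+$, and the proof concludes as in the first paragraph. The main (really the only) obstacle is this dominance verification, where the sign pattern of the Cartan matrix is combined with the disjointness $p_iq_i=0$ of the two sets of exponents.
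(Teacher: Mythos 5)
Your proof is correct and is essentially the paper's argument: your coordinatewise minimum $\mu=\la_1-\sum_i\max(0,c_i)\alpha_i^\vee$ is exactly the paper's $\mu=\la_1-\beta_1=\la_2-\beta_2$ for the partition of $\Delta^\vee$ according to the sign of $c_i$, the dominance check via $\langle\alpha_i,\alpha_j^\vee\rangle\le0$ for $j\ne i$ together with $p_iq_i=0$ is the same computation, and the reverse inclusion by comparing coefficients in the simple-coroot basis is the paper's pairing with fundamental weights. No gaps.
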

\begin{proof}
By the assumption $\det(\varpi^{\la_1})=\det(\varpi^{\la_2})$,  $\la_1-\la_2$ lies in the coroot lattice since the derived group $G_0$ is simply connected. Then there exists a partition of the set of simple coroots $\Delta^\vee=\Delta_1^\vee\sqcup\Delta_2^\vee$ such that 
\[\la_1-\la_2=\beta_1-\beta_2\]
where $\beta_i$ is a non-negative integral linear combinations of simple coroots in $\Delta_i^\vee$ for $i\in\{1,2\}$. Let $\Delta=\Delta_1\sqcup\Delta_2$ be the corresponding partion of the set of simple roots. Consider the coweight $\mu:=\la_1-\beta_1=\la_2-\beta_2.$ Then clearly $\mu\le\la_1$ and $\mu\le\la_2$.\par 
We claim that $\mu\in \La^+$. Take any simple root $\alpha\in\Delta_1$. Since $\beta_2$ is positive linear combination of coroots in $\Delta_2$, we have $\langle\alpha,\beta_2\rangle\le0$ and hence $\langle\mu,\alpha\rangle=\langle\la_2-\beta_2,\alpha\rangle\ge0$. Similarly, using $\mu=\la_1-\beta_1$, we see that for all $\alpha\in\Delta_2$, $\langle\mu,\alpha\rangle\ge0$. Thus we conclude that $\mu\in \La_+$.\par 
It is clear that $\mu-\sfD\subset(\la_1-\sfD)\cap(\la_2-\sfD)$. Now we prove the reverse inclusion. Let $\nu\in(\la_1-\sfD)\cap(\la_2-\sfD)$. Then for $i\in\{1,2\}$, $\la_i-\nu\in\sfD$ is a non-negative $\bQ$-linear combination of simple coroots and we need to show that $\mu-\nu\in\sfD$. For any fundamental weight $\omega$, there exists $i\in\{1,2\}$ so that $\omega$ is orthogonal to all coroots in $\Delta_i^\vee$. Without loss of generality assume $i=1$, then we have
\[\langle\mu-\nu,\omega\rangle=\langle\la_1-\beta_1-\nu,\omega\rangle=\langle\la_1-\nu,\omega\rangle\ge 0.\]
This means that $\nu\le_\bQ\mu$, or $\nu\in(\mu-\sfD)$. Therefore we have shown that $\mu-\sfD=(\la_1-\sfD)\cap(\la_2-\sfD)$.\par 
Finally, taking intersection with $\La_\bQ^+$, we get $\sfP_{\la_1}\cap\sfP_{\la_2}=\sfP_{\mu}$.
\end{proof}

For each $\la\in\La^+$, define
\begin{equation}\label{polytope-strata-eq}
\sfP_\la^\circ:=\sfP_\la-\bigcup_{\substack{\mu\in \La^+,\\
\mu<\la}} \sfP_\mu.
\end{equation}

\begin{cor}\label{polytope-strata-cor}
For any $\la_1,\la_2\in\La_+$ with $\la_1\ne\la_2$, we have
$\sfP_{\la_1}^\circ\cap\sfP_{\la_2}^\circ=\varnothing$. In particular, we get a well-defined stratification
\[\La_{0,\bQ}^+\times X_*(G_\mathrm{ab})=\bigsqcup_{\la\in\La^+}\sfP_\la^\circ.\]
\end{cor}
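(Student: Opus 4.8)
The plan is to deduce both assertions directly from Lemma~\ref{polytope-intersection-lem}. The disjointness of the $\sfP_\la^\circ$ is essentially a one-line consequence of that lemma; the statement that the pieces exhaust the cone $\La_{0,\bQ}^+\times X_*(G_{\mathrm{ab}})$ needs a short additional minimality argument, and that is where I expect the only real work to be.

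For disjointness, take $\la_1\neq\la_2$ in $\La^+$ and distinguish two cases according to whether $\det(\varpi^{\la_1})=\det(\varpi^{\la_2})$. If they differ, then $\la_1$ and $\la_2$ have distinct images in $X_*(G_{\mathrm{ab}})$, and since $\sfD\subset\La_{0,\bQ}$ every point of $\sfP_{\la_i}=\La_\bQ^+\cap(\la_i-\sfD)$ has the same image as $\la_i$ in $X_*(G_{\mathrm{ab}})_\bQ$; hence $\sfP_{\la_1}\cap\sfP_{\la_2}=\varnothing$ already, a fortiori for the open strata. If they agree, apply Lemma~\ref{polytope-intersection-lem} to get $\mu\in\La^+$ with $\mu\le\la_1$, $\mu\le\la_2$ and $\sfP_{\la_1}\cap\sfP_{\la_2}=\sfP_\mu$. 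Since $\la_1\neq\la_2$ at least one inequality is strict, say $\mu<\la_1$, so by \eqref{polytope-strata-eq} the set $\sfP_\mu$ lies in the part removed from $\sfP_{\la_1}$ when forming $\sfP_{\la_1}^\circ$; therefore $\sfP_{\la_1}^\circ\cap\sfP_{\la_2}^\circ\subseteq\sfP_{\la_1}^\circ\cap(\sfP_{\la_1}\cap\sfP_{\la_2})=\sfP_{\la_1}^\circ\cap\sfP_\mu=\varnothing$.

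For the covering statement, fix $\nu\in\La_{0,\bQ}^+\times X_*(G_{\mathrm{ab}})$ and consider $S_\nu=\{\la\in\La^+ : \nu\le_\bQ\la\}=\{\la\in\La^+ : \nu\in\sfP_\la\}$. First I would check $S_\nu\neq\varnothing$: lift the $X_*(G_{\mathrm{ab}})$-component of $\nu$ to some $\la_0\in\La$, and note that $\la_0+2N\rho^\vee$ is dominant for $N\gg0$ (here $2\rho^\vee=\sum_{\alpha\in\Phi^+}\alpha^\vee$ lies in the coroot lattice, is dominant regular, and lies in the interior of the positive coroot cone), while $\la_0+2N\rho^\vee-\nu\in\sfD$ for $N\gg0$ because $\la_0-\nu\in\La_{0,\bQ}$. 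Next, $S_\nu$ has a unique smallest element: any two elements of $S_\nu$ have the same $\det$, so Lemma~\ref{polytope-intersection-lem} applied to them produces a $\mu\in\La^+$ below both with $\nu\in\sfP_{\la_1}\cap\sfP_{\la_2}=\sfP_\mu$, i.e. $\mu\in S_\nu$, so $S_\nu$ is stable under this meet operation; moreover any strictly descending chain in $S_\nu$ terminates, since $\langle\rho,-\rangle$ is bounded below on $S_\nu$ by $\langle\rho,\nu\rangle$ and decreases by a positive integer at each step. Hence minimal elements exist, and the meet-stability forces a unique minimum $\mu(\nu)$. Finally $\nu\in\sfP_{\mu(\nu)}$ by construction, and $\nu\notin\sfP_{\mu'}$ for every $\mu'\in\La^+$ with $\mu'<\mu(\nu)$ (else $\mu'\in S_\nu$, contradicting minimality), so $\nu\in\sfP_{\mu(\nu)}^\circ$ by \eqref{polytope-strata-eq}. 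Together with disjointness this gives the stratification.

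The main obstacle is the covering part: one must exhibit, for every $\nu$ in the cone, the smallest dominant integral $\mu$ with $\nu\le_\bQ\mu$ and see that it lands in $\sfP_\mu^\circ$. This is precisely where the combinatorial content of Lemma~\ref{polytope-intersection-lem} (and the hypothesis $\det(\varpi^{\la_1})=\det(\varpi^{\la_2})$ it needs) is used; the disjointness itself is immediate from that lemma.
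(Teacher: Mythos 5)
Your disjointness argument is exactly the paper's: split on whether $\det(\varpi^{\la_1})=\det(\varpi^{\la_2})$, and in the equal case invoke Lemma~\ref{polytope-intersection-lem} to trap $\sfP_{\la_1}^\circ\cap\sfP_{\la_2}^\circ$ inside $\sfP_\mu$, which is removed from at least one of the open strata; your explicit remark that $\la_1\ne\la_2$ forces one of the inequalities $\mu\le\la_i$ to be strict is a small but genuine improvement in precision over the paper's wording. The one real difference is that the paper's proof stops there and never verifies that the $\sfP_\la^\circ$ actually cover $\La_{0,\bQ}^+\times X_*(G_{\mathrm{ab}})$ (this is only implicitly revisited later in Lemma~\ref{integral-approximation-lem}), whereas you supply a complete argument: nonemptiness of $S_\nu$ via a translate by a large multiple of $2\rho^\vee$, existence of a minimum via the meet operation furnished by Lemma~\ref{polytope-intersection-lem} together with the descending chain condition coming from $\langle\rho,-\rangle$. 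That covering argument is correct and fills a gap the paper glosses over; no changes needed.
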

\begin{proof}
If $\det(\varpi^{\la_1})\ne\det(\varpi^{\la_2})$, it is clear that $\sfP_{\la_1}$ and $\sfP_{\la_2}$ are disjoint. Suppose $\det(\varpi^{\la_1})\ne\det(\varpi^{\la_2})$. Then  by Lemma~\ref{polytope-intersection-lem}, there exists $\mu\in\La^+$ such that $\mu\le\la_1,\mu\le\la_2$ and
\[\sfP_{\la_1}^\circ\cap\sfP_{\la_2}^\circ\subset\sfP_{\la_1}\cap\sfP_{\la_2}=\sfP_\mu.\]
But by \eqref{polytope-strata-eq}, we have $\sfP_\mu\cap\sfP_{\la_i}^\circ=\varnothing$ since $\mu\le\la_i$ for $i\in\{1,2\}$. Therefore $\sfP_{\la_1}^\circ\cap\sfP_{\la_2}^\circ=\varnothing$.
\end{proof}

\subsubsection{Stratification on Steinberg base}
To get a conceptually simpler formulation of the conjecture on irreducible components, we introduce a stratification on $\fC(F)$.\par 
Recall from ~\ref{C-le-la-section} that we defined 
$\fC_{\le\la}=\chi(L^+G\varpi^\la L^+G)\subset\fC(F)$. We know that
\[\fC_{\le\la}=\fC_+^\la(\cO)\times\det(\varpi^\la)G_{\mathrm{ab}}(\cO).\]
where
\begin{equation}\label{C-plus-la-O-eq}
\fC_+^\la(\cO)=\bigoplus_{i=1}^r\varpi^{\langle\omega_i,w_0(\lambda)\rangle}\cO \subset F^r\cong\fC_{G_0}(F).
\end{equation}
In particular for each dominant coweight $\mu\in\Lambda^+$ with $\mu\le\lambda$, we get $\langle\omega_i, w_0(\mu)\rangle\ge\langle\omega_i,w_0(\lambda)\rangle$ and $\det(\varpi^\mu)=\det(\varpi^\la)$. Hence there is a natural inclusion $\fC_{\le\mu}\subset\fC_{\le\lambda}$.
\begin{prop}\label{Steinberg-strata-intersection-prop}
For any $\la,\la'\in\La^+$ with $\det(\varpi^{\la})=\det(\varpi^{\la'})$, there exists $\mu\in\La^+$ such that $\mu\le\la,\mu\le\la'$ and 
\[\fC_{\le\la}\cap\fC_{\le\la'}=\fC_{\le\mu}.\]
\end{prop}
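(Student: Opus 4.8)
The key observation is that $\fC_{\le\la}$ depends on $\la$ only through the tuple of valuations $(\langle\omega_i, w_0(\la)\rangle)_{1\le i\le r}$ and the class $\det(\varpi^\la)\in G_{\mathrm{ab}}(\cO)$ (modulo $G_{\mathrm{ab}}(\cO)$-translation), via the explicit formula \eqref{C-plus-la-O-eq}. So I would first record that, under the identification $\fC_{G_0}(F)\cong F^r$, a point $c=(c_1,\dots,c_r)$ lies in $\fC_{\le\la}$ if and only if $\det$-component matches and $\mathrm{val}(c_i)\ge\langle\omega_i, w_0(\la)\rangle$ for all $i$. Since the $\omega_i$ are (the images under $\theta^*$ of) a basis dual to the simple coroots, the condition $\mathrm{val}(c_i)\ge\langle\omega_i, w_0(\la)\rangle$ for all $i$ is exactly the condition that $w_0(\la)-(\text{valuation vector of }c)$ lies in the non-negative span of the simple coroots, i.e. (after applying $-w_0$, which preserves dominance and the coroot cone) a membership condition of the form ``$\nu\le_{\bQ}\la$'' in the sense of Proposition~\ref{nonempty-prop}.

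\textbf{Next I would translate intersections of the $\fC_{\le\la}$ into intersections of the polytopes $\sfP_\la$ — or more precisely the shifted cones $\la-\sfD$.} Concretely: given $\la,\la'\in\La^+$ with $\det(\varpi^\la)=\det(\varpi^{\la'})$, a point $c$ lies in $\fC_{\le\la}\cap\fC_{\le\la'}$ iff its valuation data satisfies $\mathrm{val}(c_i)\ge\max\{\langle\omega_i, w_0(\la)\rangle, \langle\omega_i, w_0(\la')\rangle\}$ for every $i$. By Lemma~\ref{polytope-intersection-lem} applied to the dominant coweights $-w_0(\la), -w_0(\la')$ (which have the same image in $G_{\mathrm{ab}}$, since $-w_0$ acts as an involution on $G_{\mathrm{ab}}$ compatible with $\det$), there is a dominant $\mu'\in\La^+$ with $\mu'\le -w_0(\la)$, $\mu'\le -w_0(\la')$ and $(-w_0(\la)-\sfD)\cap(-w_0(\la')-\sfD)=\mu'-\sfD$. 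Setting $\mu:=-w_0(\mu')$ gives a dominant coweight with $\mu\le\la$, $\mu\le\la'$, and — crucially — $\langle\omega_i, w_0(\mu)\rangle = \max\{\langle\omega_i, w_0(\la)\rangle,\langle\omega_i, w_0(\la')\rangle\}$ for each $i$; this last equality is precisely the statement that $\mu'-\sfD$ is the intersection of the two shifted cones, read off on the fundamental-weight pairings. Therefore the valuation constraints defining $\fC_{\le\la}\cap\fC_{\le\la'}$ coincide with those defining $\fC_{\le\mu}$, and since the $\det$-components all agree we conclude $\fC_{\le\la}\cap\fC_{\le\la'}=\fC_{\le\mu}$.

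\textbf{The one point that needs genuine care is the passage between ``min/max of valuation bounds'' and ``intersection of shifted coroot cones''}, i.e. verifying that the coweight $\mu'$ produced by Lemma~\ref{polytope-intersection-lem} actually realizes the coordinatewise maximum $\max\{\langle\omega_i,w_0(\la)\rangle,\langle\omega_i,w_0(\la')\rangle\}$ and not merely some common lower bound. This is exactly what the proof of Lemma~\ref{polytope-intersection-lem} establishes: the $\mu$ constructed there satisfies $\langle\mu,\omega\rangle=\langle\la_j,\omega\rangle$ whenever $\omega$ is orthogonal to $\Delta_j^\vee$, and the partition $\Delta^\vee=\Delta_1^\vee\sqcup\Delta_2^\vee$ is arranged so that, for the $i$-th fundamental weight, the relevant index $j$ is the one achieving the larger pairing. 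So I would simply invoke Lemma~\ref{polytope-intersection-lem} and unwind its conclusion at the level of the pairings $\langle\omega_i,-\rangle$; the dominance of $\mu$ and the inequalities $\mu\le\la,\mu\le\la'$ come for free from that lemma. I expect the writeup to be short — essentially a dictionary between the description \eqref{C-plus-la-O-eq} of $\fC_{\le\la}(\cO)$ and the cone description of $\sfP_\la$ — with no new ideas beyond Lemma~\ref{polytope-intersection-lem} itself; the main (minor) obstacle is bookkeeping the two applications of $-w_0$ and checking they are harmless on the $G_{\mathrm{ab}}$-factor.
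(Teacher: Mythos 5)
Your proposal is correct and follows essentially the same route as the paper: both reduce the statement to Lemma~\ref{polytope-intersection-lem} via the explicit description \eqref{C-plus-la-O-eq} of $\fC_+^\la(\cO)$ as a coordinatewise valuation condition indexed by the fundamental weights. The only (harmless) cosmetic difference is that the paper applies the lemma directly to $\la,\la'$ and tests membership of the valuation vector $\nu=\sum a_i\alpha_i^\vee$ of a given point in the intersected cones, absorbing the duality into the involution $\iota$ with $\omega_{\iota(i)}=-w_0(\omega_i)$, whereas you apply it to $-w_0(\la),-w_0(\la')$ and verify that $\mu$ realizes the coordinatewise max of the valuation bounds --- a point you correctly flag and justify.
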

\begin{proof}
By Lemma~\ref{polytope-intersection-lem}, there exists $\mu\in\La^+$ such that 
\begin{equation}\label{widecone-intersection-eq}
(\la-\sfD)\cap (\la'-\sfD)=\mu-\sfD
\end{equation}
To prove the proposition, it suffices to show that 
\[\fC_+^{\la}(\cO)\cap\fC_+^{\la'}(\cO)\subset\fC_+^\mu(\cO)\]
Let $\iota$ be the involution on the set $\{1,\dotsc,r\}$ such that $\omega_{\iota(i)}=-w_0(\omega_i)$ for all $1\le i\le r$. For each $c=(c_1,\dotsc,c_r)\in\fC_{G_0}(F)\cong F^r$,
let $a_i:=-\mathrm{val}(c_{\iota(i)})$. Suppose that $c\in\fC_+^{\la}(\cO)\cap\fC_+^{\la'}(\cO)$, then by \eqref{C-plus-la-O-eq} we get
\begin{equation}\label{a-i-inequality-eq}
a_i\le\langle\la,\omega_i\rangle\text{  and  } a_i\le\langle\la',\omega_i\rangle\quad\text{for all }1\le i\le r
\end{equation}
and we need to show that $a_i\le\langle\mu,\omega_i\rangle$ for all $1\le i\le r$.\par 
Let $\la_0:=\sum_{i=1}^r\langle\la,\omega_i\rangle\alpha_i^\vee$ and define $\la_0'$ (resp. $\mu_0$) in a similar way, replacing $\la$ by $\la'$ (resp. $\mu$). Then in particular $\la_0,\la_0',\mu_0\in\La_0^+$. Consider the coweight $\nu:=\sum_{i=1}^r a_i\alpha_i^\vee\in\La_0$. By \eqref{widecone-intersection-eq} and \eqref{a-i-inequality-eq} we have
\[\nu\in(\la_0-\sfD)\cap(\la_0'-\sfD)=\mu_0-\sfD.\]
This implies that
\[a_i=\langle\nu,\omega_i\rangle\le\langle\mu_0,\omega_i\rangle=\langle\mu,\omega_i\rangle\]
and we are done.
\end{proof}
Define
\begin{equation}\label{Steinberg-strata-eq}
\fC_\la^\circ:=\fC_{\le\la}-\bigcup_{\substack{\mu\in\Lambda_+\\ \mu<\la}}\fC_{\le\mu}.
\end{equation}

\begin{cor}
For any $\la_1,\la_2\in\La^+$ with $\la_1\ne\la_2$, we have
$\fC_{\la_1}^\circ\cap\fC_{\la_2}^\circ=\varnothing$. In particular, we get a well-defined stratification
\[\fC(F)=\bigsqcup_{\la\in\Lambda_+}{\fC}^\circ_\la\]
\end{cor}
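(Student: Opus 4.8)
The plan is to follow the proof of Corollary~\ref{polytope-strata-cor}, substituting its combinatorial input Lemma~\ref{polytope-intersection-lem} by the corresponding statement on the Steinberg base, namely Proposition~\ref{Steinberg-strata-intersection-prop}. First I would prove disjointness. Fix $\la_1\ne\la_2$ in $\Lambda^+$. If $\det(\varpi^{\la_1})$ and $\det(\varpi^{\la_2})$ lie in different cosets of $G_{\mathrm{ab}}(\cO)$ in $G_{\mathrm{ab}}(F)$, then already $\fC_{\le\la_1}\cap\fC_{\le\la_2}=\varnothing$ by the product description $\fC_{\le\la}=\fC_+^\la(\cO)\times\det(\varpi^\la)G_{\mathrm{ab}}(\cO)$, and a fortiori $\fC_{\la_1}^\circ\cap\fC_{\la_2}^\circ=\varnothing$. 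Otherwise Proposition~\ref{Steinberg-strata-intersection-prop} produces $\mu\in\Lambda^+$ with $\mu\le\la_1$, $\mu\le\la_2$ and $\fC_{\le\la_1}\cap\fC_{\le\la_2}=\fC_{\le\mu}$; since $\la_1\ne\la_2$, at least one of the two inequalities is strict, say $\mu<\la_1$ (the other case being symmetric). Then $\fC_{\le\mu}$ is one of the pieces removed in the definition~\eqref{Steinberg-strata-eq} of $\fC_{\la_1}^\circ$, so $\fC_{\le\mu}\cap\fC_{\la_1}^\circ=\varnothing$; as $\fC_{\la_1}^\circ\cap\fC_{\la_2}^\circ\subseteq\fC_{\le\la_1}\cap\fC_{\le\la_2}=\fC_{\le\mu}$, we conclude $\fC_{\la_1}^\circ\cap\fC_{\la_2}^\circ=\varnothing$. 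This half is formal, exactly as in Corollary~\ref{polytope-strata-cor}.

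For the assertion that the strata cover all of $\fC(F)$, I would first check $\fC(F)=\bigcup_{\la\in\Lambda^+}\fC_{\le\la}$. Given $c=(c_1,\dots,c_r,z)\in\fC_{G_0}(F)\times G_{\mathrm{ab}}(F)$, lift the class of $z$ in $G_{\mathrm{ab}}(F)/G_{\mathrm{ab}}(\cO)=X_*(G_{\mathrm{ab}})$ to some $\nu\in\Lambda$, and set $\la=\nu+n\delta$ where $\delta$ is the sum of the positive coroots (a dominant regular coweight lying in the coroot lattice, hence trivial in $X_*(G_{\mathrm{ab}})$). For $n$ large, $\la$ is dominant, $\det(\varpi^\la)$ lies in the coset of $z$, and the exponents $\langle\omega_i,w_0(\la)\rangle$ appearing in~\eqref{C-plus-la-O-eq} tend to $-\infty$, hence are eventually $\le\mathrm{val}(c_i)$ for all $i$; thus $c\in\fC_{\le\la}$. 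Now fix $c$ and let $S=\{\la\in\Lambda^+:c\in\fC_{\le\la}\}$, which is nonempty; every $\la\in S$ determines the same coset $\det(\varpi^\la)G_{\mathrm{ab}}(\cO)$, so Proposition~\ref{Steinberg-strata-intersection-prop} applies to any two members of $S$ and shows that $S$ is downward directed, any two of its elements admitting a common lower bound lying in $S$. Fixing $\la_0\in S$, the subset $\{\mu\in S:\mu\le\la_0\}$ is finite (the dominant coweights below a fixed one form a finite set) and downward directed, hence has a least element $\mu^\ast$, which one checks is the minimum of all of $S$. Then $c\in\fC_{\mu^\ast}^\circ$ by~\eqref{Steinberg-strata-eq}, and by the disjointness above this is the unique stratum containing $c$, which gives the stratification.

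I expect the only mildly delicate point to be the covering statement: exhibiting an arbitrarily dominant representative of a prescribed coset of the coroot lattice, and then extracting a well-defined minimal element of $S$ from the finiteness of $\{\mu\in\Lambda^+:\mu\le\la_0\}$. The disjointness, which is the analogue of what is actually written for Corollary~\ref{polytope-strata-cor}, is immediate from Proposition~\ref{Steinberg-strata-intersection-prop} together with~\eqref{Steinberg-strata-eq}.
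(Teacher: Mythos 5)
Your proof is correct and follows essentially the same route as the paper, which simply transports the argument of Corollary~\ref{polytope-strata-cor} using Proposition~\ref{Steinberg-strata-intersection-prop} in place of Lemma~\ref{polytope-intersection-lem}; your handling of the case $\mu=\la_i$ (noting that at least one inequality $\mu\le\la_i$ must be strict) is the right way to read the definition~\eqref{Steinberg-strata-eq}. The covering half, which you prove by exhibiting a sufficiently dominant $\la$ in the right $G_{\mathrm{ab}}(\cO)$-coset and extracting a minimal element of $S$, is left implicit in the paper but is a correct and welcome completion of the argument.
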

\begin{proof}
The argument is similar to the proof of Corollary~\ref{polytope-strata-cor}, using Proposition~\ref{Steinberg-strata-intersection-prop} instead of Lemma~\ref{polytope-intersection-lem}.
\end{proof}
The following lemma relates the stratification \eqref{Steinberg-strata-eq} on $\fC(F)$ and the stratification \eqref{polytope-strata-eq} on the dominant weight cone $\La_{0,\bQ}^+\times X_*(G_{\mathrm{ab}})$.

\begin{lem}\label{integral-approximation-lem}
For any $\ga\in G(F)^\mathrm{rs}$, there exists a \emph{unique} dominant integral coweight $\mu\in\La_+$ that satisfies any (or all) of the following equivalent conditions:
\begin{enumerate}
\item $\mu\in\La_+$ is a smallest dominant integral coweight such that $\nu_\ga\le_\bQ\mu$;
\item $\nu_\ga\in\sfP_\mu^\circ$, cf. \eqref{polytope-strata-eq};
\item $\chi(\ga)\in\fC_\mu^\circ$, cf. \eqref{Steinberg-strata-eq}.
\end{enumerate}
\end{lem}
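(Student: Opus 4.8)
The plan is to prove the three conditions are equivalent first, and then establish existence and uniqueness of $\mu$ by producing it explicitly.

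For the equivalence of (1), (2) and (3), the key observation is that all three are really statements about which ``cone translates'' $\nu_\ga$ lies in. First I would reformulate (1): by definition, $\mu$ is a smallest dominant integral coweight with $\nu_\ga\le_\bQ\mu$ precisely when $\nu_\ga\le_\bQ\mu$ (equivalently $\nu_\ga\in\mu-\sfD$, since $\nu_\ga$ is dominant so $\nu_\ga\in\La^+_{\bQ}\cap(\mu-\sfD)=\sfP_\mu$) but $\nu_\ga\notin\sfP_{\mu'}$ for every dominant integral $\mu'<\mu$; this is exactly condition (2) by the definition \eqref{polytope-strata-eq} of $\sfP_\mu^\circ$. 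For the equivalence of (2) and (3), I would use the dictionary between the polytope picture and the Steinberg-base picture that is already implicit in the excerpt: Proposition~\ref{nonempty-prop} tells us $\nu_\ga\le_\bQ\la\iff\chi(\ga)\in\fC_{\le\la}$ (together with the matching of central characters, using that $\det(\ga)$ determines the $G_{\mathrm{ab}}$-component of both $\nu_\ga$ and $\chi(\ga)$). Combined with the observation that $\mu\le\la$ in $\La^+$ forces $\det(\varpi^\mu)=\det(\varpi^\la)$, this gives a bijection-preserving correspondence between the strata $\sfP_\la^\circ$ of Corollary~\ref{polytope-strata-cor} and the strata $\fC_\la^\circ$ of \eqref{Steinberg-strata-eq}; then $\nu_\ga\in\sfP_\mu^\circ$ and $\chi(\ga)\in\fC_\mu^\circ$ single out the same $\mu$. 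Concretely, $\chi(\ga)\in\fC_{\le\la}\iff\nu_\ga\le_\bQ\la\iff\nu_\ga\in\sfP_\la$, and intersecting over all $\la$ with the same determinant and using Proposition~\ref{Steinberg-strata-intersection-prop} versus Lemma~\ref{polytope-intersection-lem} shows the two stratifications have identical incidence structure on the relevant sublattice.

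For existence and uniqueness, uniqueness is immediate once the strata are shown to be disjoint, which is Corollary~\ref{polytope-strata-cor} (for formulation (2)): $\nu_\ga$ lies in the dominant cone $\La^+_{0,\bQ}\times X_*(G_{\mathrm{ab}})$, which is the disjoint union $\bigsqcup_{\mu\in\La^+}\sfP_\mu^\circ$, so there is at most one $\mu$ with $\nu_\ga\in\sfP_\mu^\circ$. Existence amounts to showing that among the (nonempty, by Proposition~\ref{nonempty-prop} applied with, say, a dominant integral $\la$ large enough that $\nu_\ga\le_\bQ\la$) set of dominant integral coweights $\la$ with $\nu_\ga\le_\bQ\la$, there is a unique minimal one. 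This is where Lemma~\ref{polytope-intersection-lem} does the real work: if $\la_1,\la_2$ are two such, then automatically $\det(\varpi^{\la_1})=\det(\varpi^{\la_2})$ (both equal the $G_{\mathrm{ab}}$-part of $\ga$), so the lemma produces $\mu\in\La^+$ with $\mu\le\la_1,\mu\le\la_2$ and $(\la_1-\sfD)\cap(\la_2-\sfD)=\mu-\sfD$; since $\nu_\ga$ lies in both $\la_i-\sfD$, we get $\nu_\ga\in\mu-\sfD$, i.e.\ $\mu$ is again in our set and is a common lower bound. Iterating over the finitely many dominant integral coweights that are $\le$ any fixed such $\la_0$, the set of admissible $\la$ is a finite meet-semilattice (with meet given by Lemma~\ref{polytope-intersection-lem}), hence has a least element $\mu$; by construction $\nu_\ga\le_\bQ\mu$ and no smaller dominant integral coweight works, which is formulation (1).

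The main obstacle I anticipate is not any single deep step but rather bookkeeping the $G_{\mathrm{ab}}$-component carefully throughout: Lemma~\ref{polytope-intersection-lem} and Proposition~\ref{Steinberg-strata-intersection-prop} both require the hypothesis $\det(\varpi^{\la_1})=\det(\varpi^{\la_2})$, and one must check at each stage that the coweights being compared really do have this property (which holds because they are all constrained by $\nu_\ga\le_\bQ(\cdot)$ and $\nu_\ga$ has a fixed image in $X_*(G_{\mathrm{ab}})$, namely $\det(\ga)$ read as a cocharacter). Once that is in place, the argument is essentially a formal consequence of the lattice-theoretic lemmas already proved, plus the translation dictionary of Proposition~\ref{nonempty-prop}.
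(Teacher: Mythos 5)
Your proposal is correct and follows the same route as the paper's own (much terser) proof: (1)$\iff$(2) from the definition of $\sfP_\mu^\circ$, (1)$\iff$(3) from Proposition~\ref{nonempty-prop}, and uniqueness from Lemma~\ref{polytope-intersection-lem} / Proposition~\ref{Steinberg-strata-intersection-prop}. Your explicit meet-semilattice argument for existence and the bookkeeping of the $G_{\mathrm{ab}}$-component are details the paper leaves implicit, but they are exactly the intended ones.
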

\begin{proof}
The equivalence between (1) and (2) folows from the definition of $\sfP_\mu$. The equivalence of (1) and (3) follows from Proposition~\ref{nonempty-prop}.\par 
Finally, the uniqueness of $\mu$ follows from Lemma~\ref{polytope-intersection-lem} or Proposition~\ref{Steinberg-strata-intersection-prop}.
\end{proof}
Now we state our conjecture on irreducible components of $X_\ga^\la$:
\begin{conjecture}\label{irr-components-conjecture}
Let $\ga\in G(F)^{\mathrm{rs}}$ and $\mu\in\La_+$ be the unique dominant coweight that satisfies the equivalent conditions in Lemma~\ref{integral-approximation-lem}. Then the number of $G_\ga(F)$-orbits on  $\mathrm{Irr}(X_\ga^\la)$ equals to the weight multiplicity $m_{\lambda\mu}$.
\end{conjecture}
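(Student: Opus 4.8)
The plan is to establish the conjecture in the two cases where $X_\ga^\la$ is directly accessible --- $\la=0$ and $\ga$ unramified --- leaving aside the general ramified case, which seems to require the global (Hitchin-type) input of \cite{BC17}. When $\la=0$ nonemptiness forces $\nu_\ga\le_\bQ 0$, so $\mu=0$ is the coweight of \lemref{integral-approximation-lem} and $m_{\la\mu}=m_{00}=1$; thus one must show $G_\ga(F)$ acts transitively on $\mathrm{Irr}(X_\ga^{0})$. Since $V_G^{0}=G_0$ has no monoid boundary, the regular locus $X_\ga^{0,\mathrm{reg}}$ coincides with a single $\cP_a$-torsor $X_\ga^{0,w}$ and is open \emph{dense} in $X_\ga^{0}$, exactly as in the Lie algebra setting; because $\pi_0(\cP_a)$ permutes the components of this torsor simply transitively, density forces a single $G_\ga(F)$-orbit on $\mathrm{Irr}(X_\ga^{0})$. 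This is the adaptation of Kazhdan--Lusztig carried out in \cite[\S4.2]{Bou15}.

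For the unramified case we may assume $\ga\in\varpi^\mu T(\cO)\cap G^{\mathrm{rs}}(F)$ with $\mu\in\Lambda^+$, so $\mu=\nu_\ga$ is already dominant, integral and, by nonemptiness, $\le\la$; hence $\mu$ is the coweight of \lemref{integral-approximation-lem}, and the conjecture predicts $m_{\la\mu}$ orbits. This is exactly Corollary~\ref{dim-unr-cor}, which I would invoke, but its substance rests on \thmref{Y-gamma-thm}, which I would organize as follows. First, the translation morphism $(u,\nu)\mapsto u\varpi^\nu$ stratifies $X_\ga^\la$ into copies of $Y_\ga^\la$, and since $G_\ga=T$ is split the $LG_\ga$-action on $\mathrm{Irr}(X_\ga^\la)$ factors through $\pi_0(\cP_a)=\Lambda$, acting by exactly these translations; hence $G_\ga(F)$-orbits on $\mathrm{Irr}(X_\ga^\la)$ are in bijection with $\mathrm{Irr}(Y_\ga^\la)$. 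Second, writing $Y_\ga^\la=f_\ga^{-1}(K\varpi^\la K\varpi^{-\mu}\cap LU)/L^+U$ with $f_\ga(u)=u^{-1}\ga u\ga^{-1}$, and comparing with the Mirkovi\'c--Vilonen presentation $S_\mu\cap\mathrm{Gr}_\la\cong(LU\cap K\varpi^\la K\varpi^{-\mu})/\varpi^\mu L^+U\varpi^{-\mu}$, one obtains a diagram joining $Y_\ga^\la$ to the MV cycle $S_\mu\cap\mathrm{Gr}_\la$ in which, after dividing out by a deep enough congruence subgroup $U_n$, every arrow is smooth surjective with irreducible fibers. Descent of irreducible components along such morphisms (\cite[Tag 037A]{stacks-project}) yields $\mathrm{Irr}(Y_\ga^\la)\xrightarrow{\sim}\mathrm{Irr}(S_\mu\cap\mathrm{Gr}_\la)$, and the latter has $m_{\la\mu}$ elements by the Mirkovi\'c--Vilonen theorem.

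The crux, and the step I expect to be hardest, is the smoothness and uniform finite-type statement for $f_\ga$ packaged in \propref{admissible-set-prop}: one must show that $f_\ga$ sends admissible subsets of $LU$ to admissible subsets and that the induced maps on $U_n$-quotients are smooth surjective with irreducible fibers of the precise dimension $r(\ga)=\frac{1}{2}d(\ga)+\langle\rho,\mu\rangle$. I would argue through the height filtration $U=U[1]\supset U[2]\supset\cdots$, on whose successive quotients $U\langle i\rangle\cong\bG_a$ the map $f_\ga$ induces multiplication by $\alpha_i(\ga)-1$, so that analyzing $f_\ga$ comes down to solving triangular systems of equations governed by the valuations $\mathrm{val}(\alpha(\ga)-1)$; this is the role of \lemref{f_0-smooth-lem}, \lemref{image-L^+U-lem} and \lemref{inverse-image-U_n-lem}, the last two supplying respectively the surjectivity of $f_\ga$ onto deep congruence subgroups and the estimate $f_\ga^{-1}(U_n)\subset U_{n-r(\ga)}$ that makes truncation legitimate. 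For general ramified $\ga$ the obstruction is of a different nature: since $X_\ga^\la$ may have irreducible components disjoint from its regular locus, there is no direct comparison with MV cycles, and one must instead count components in a family over a group version of the Hitchin base --- the approach of \cite{BC17}.
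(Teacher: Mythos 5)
Your proposal matches the paper's treatment of this conjecture exactly: the paper proves it only in the two cases you address, citing \cite[S4.2]{Bou15} for $\la=0$ and Corollary~\ref{dim-unr-cor} for unramified $\ga$, with the latter resting on precisely the chain you describe (the $\La$-translation stratification of $X_\ga^\la$ into copies of $Y_\ga^\la$, the comparison of $Y_\ga^\la$ with the MV cycle $S_\mu\cap\mathrm{Gr}_\la$ via $f_\ga$ and Proposition~\ref{admissible-set-prop}, and descent of irreducible components along smooth surjections with irreducible fibers). Your identification of Proposition~\ref{admissible-set-prop} and the height-filtration analysis of $f_\ga$ as the technical crux, and of the general ramified case as requiring the global input of \cite{BC17}, is also exactly how the paper organizes matters.
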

By Corollary~\ref{dim-unr-cor}, this conjecture is true when $\ga$ is an unramified conjugacy class. Also, it is true when $\la=0$ by \citep[\S4.2]{Bou15}.
\begin{rem}\label{best-integral-approx-rem}
For irreducible components of affine Deligne-Lusztig varieties, there is a similar conjecture made by Chen-Zhu, see the discussion in \citep{HaVi17} and \citep{XiaoZhu17}. In their setting, they also approximate Newton points of twisted conjugacy classes by integral coweight. However, the ``best integral approximation" as defined in \citep{HaVi17} is the largest integral coweight dominated by the Newton point. Whereas in the formulation of Conjecture~\ref{irr-components-conjecture}, we use the smallest integral coweight dominating the Newton point. Simple examples suggest that these two integral approximations are very likely in the same Weyl group orbit, so we expect the two weight multiplicities to be the same.
\end{rem}

\subsection{Components of the regular locus}\label{regular-components-section}
The $G_\ga(F)$-orbits on $\mathrm{Irr}(X_\ga^{\la,\mathrm{reg}})$ corresponds bijectively to $G_\ga(F)$ orbits on $X_\ga^{\la,\mathrm{reg}}$, which are precisely the $\cP_a$-orbits of maximal dimension on $X_\ga^\la$. We know from Proposition~\ref{X-ga-w-torsor-prop} that these are the varieties $X_\ga^{\la,w}$ for $w\in\mathrm{Cox(W,S)}$.\par  
However, for two different $w,w'\in\mathrm{Cox}(W,S)$, $X_\ga^{\la,w}$ and $X_\ga^{\la,w'}$ might coincide. For example, in the case $\la=0$ and $\ga\in G(\cO)$, all $X_\ga^{\la,w}$ coincide (hence equal to $X_\ga^{\la,\mathrm{reg}}$). So in this particular case $X_\ga^{\la,\mathrm{reg}}$ is the unique $\cP_a$-orbit of maximal dimension. In general, we know from \eqref{X-ga-reg-open-cover-eq} that the number of $G_\ga(F)$ orbits in $X_\ga^{\la,\mathrm{reg}}$ is bounded above by the Cardinality of $\mathrm{Cox}(W,S)$. We will see that in many situations, this upper bound can be achieved (in other words $X_\ga^{\la,w}$ are mutually disjoint).  
\begin{thm}\label{regular-component-upper-bound-thm}
Let $\ga\in G(F)^{\mathrm{rs}}$ be a regular semisimple element. Let $\mu\in\La^+$ be the unique smallest integral coweight that dominantes the Newton point $\nu_\ga$ as in Lemma~\ref{integral-approximation-lem}. Then for any $\la\in\La^+$ with $\la\ge\mu$, $X_\ga^\la$ is nonempty and we have an inequality
\[|\{G_\ga(F) \text{ orbits on }X_\ga^{\la,\mathrm{reg}}\}|\le|\mathrm{Cox}(W,S)|\]
where $\mathrm{Cox}(W,S)$ is the set of $S$-Coxeter elements defined in Definition~\ref{coxeter-definition}. Moreover, when $\la$ lies in the interior of the Weyl chamber and $\la-\mu$ lies in the interior of the positive coroot cone, the equality is achieved. 
\end{thm}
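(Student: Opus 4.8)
The nonemptiness of $X_\ga^\la$ for $\la\ge\mu$ is immediate from Proposition~\ref{nonempty-prop}, since $\nu_\ga\le_\bQ\mu\le\la$ gives $\nu_\ga\le_\bQ\la$. Recall that the $G_\ga(F)$-orbits on $\mathrm{Irr}(X_\ga^{\la,\mathrm{reg}})$ correspond bijectively to the $G_\ga(F)$-orbits on $X_\ga^{\la,\mathrm{reg}}$, and that by \eqref{X-ga-reg-open-cover-eq} together with Proposition~\ref{X-ga-w-torsor-prop} these orbits are exactly the pieces $X_\ga^{\la,w}$, $w\in\mathrm{Cox}(W,S)$, each being a single $\cP_a$-torsor. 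This already gives the bound $|\mathrm{Cox}(W,S)|$. Since two $G_\ga(F)$-orbits are either equal or disjoint, the equality will follow once we show that, under the two hypotheses on $\la$, the $X_\ga^{\la,w}$ are pairwise \emph{disjoint}; note that each of them is nonempty, since the construction in the step (3)$\Rightarrow$(1) of Proposition~\ref{nonempty-prop} produces a point $h$ with $\mathrm{Ad}(h)^{-1}(\ga_\la)=\epsilon_\la^w(\chi_\la(\ga_\la))\in V_G^{\la,w}(\cO)$.

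To prove disjointness, suppose $x\in X_\ga^{\la,w}\cap X_\ga^{\la,w'}$. By Lemma~\ref{second-def-lem}, $x$ is a morphism $h\colon\spec\cO\to[V_G^{\la,0}/G]$ lying over $a:=\chi_\la(\ga_\la)\colon\spec\cO\to\fC_+^\la$, and the hypothesis means that $h$ factors through the open substacks $[V_G^{\la,w}/G]$ and $[V_G^{\la,w'}/G]$ of $[V_G^{\la,0}/G]$, hence through their intersection; in particular the image $\bar h$ of the closed point of $\spec\cO$ lies in $[V_G^{\la,w}/G]\cap[V_G^{\la,w'}/G]$. The first main step is to show that under the two hypotheses the closed point of $a$ maps to the origin of $\fC_+$, i.e. to $\chi_+(\cN)$. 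Indeed, interiority of $\la$ in the Weyl chamber makes $\langle-w_0(\la),\alpha_i\rangle>0$ for all $i$, so the structure morphism $\spec\cO\to A_{G_0}$ defining $\fC_+^\la$ sends the closed point to $0$; and since $\la-\mu$ is interior to the positive coroot cone, one has $\langle\omega_i,w_0(\mu)\rangle\ge\langle\omega_i,w_0(\la)\rangle+1$ for all $i$, so the containment $\chi(\ga)\in\fC_{\le\mu}$ of Proposition~\ref{nonempty-prop}, together with the description \eqref{C-plus-la-O-eq} of $\fC_+^\la(\cO)$, forces every coordinate of $a$ in the $\bA^r$-factor of $\fC_+^\la$ to have valuation strictly larger than the bound defining $\fC_+^\la(\cO)$, i.e. the closed point of $a$ lies at the origin of that factor too. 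Hence $\bar h$ lies in $[\cN^0/G]$.

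The second main step uses only the geometry of the nilpotent cone recalled in Proposition~\ref{nilp-cone-prop}. Since $V_G^{\la,w}\subset V_G^{\la,\mathrm{reg}}$ and $\bar h$ lies over $\chi_+(\cN)$, the $G$-orbit $\bar h$ is a point of $V_{G_0}^w\cap\cN^{\mathrm{reg}}$, which by the description of $V_{G_0}^w$ via the morphism $c_w$ equals the single open $G$-orbit $G\cdot\epsilon_+^w(0)$; likewise $\bar h=G\cdot\epsilon_+^{w'}(0)$. But $w\mapsto G\cdot\epsilon_+^w(0)$ is a bijection $\mathrm{Cox}(W,S)\xrightarrow{\sim}\mathrm{Irr}(\cN^{\mathrm{reg}})$, and distinct irreducible components of $\cN^{\mathrm{reg}}$, being distinct open $G$-orbits in $\cN$, are disjoint; hence $w=w'$. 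This shows $X_\ga^{\la,w}\cap X_\ga^{\la,w'}=\varnothing$ for $w\ne w'$, which completes the proof.

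I expect the main obstacle to be the valuation computation in the first main step --- the verification that the closed point of $a=\chi_\la(\ga_\la)$ is precisely $\chi_+(\cN)$. This is where both hypotheses on $\la$ are used essentially (neither implies the other, as simple rank-$2$ examples show), and it requires careful bookkeeping of the identifications involved: the homomorphism $\theta_*\colon\La\to X_*(T_0^+)$, the central twist in $\ga_\la=\varpi^{-\la_0-w_0(\bar\la)}\theta(\ga)$, the embedding $\fC_+^\la(\cO)\hookrightarrow\fC_{G_0}(F)\cong F^r$ of \eqref{C-plus-la-O-eq}, and the action of $-w_0$ on the weights $\omega_i=\theta^*(\omega_i^+)$. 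Granting that step, everything else is formal, using only the $\bB J^\la$-gerbe property of Proposition~\ref{BJ-gerbe-V-lambda-prop} and the orbit/component description of $\cN^{\mathrm{reg}}$ in Proposition~\ref{nilp-cone-prop}.
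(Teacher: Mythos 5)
Your proposal is correct and follows essentially the same route as the paper's proof: both hypotheses on $\la$ are used exactly as in the paper to show that the special fiber of $\chi_\la^{-1}(a)$ is the nilpotent cone $\cN$, so that reduction mod $\varpi$ sends points of $X_\ga^{\la,\mathrm{reg}}$ into $\cN^{\mathrm{reg}}$, and the count then comes from the bijection of Proposition~\ref{nilp-cone-prop} between $\mathrm{Cox}(W,S)$ and the open $G$-orbits in $\cN$. Your reformulation as nonemptiness plus pairwise disjointness of the $X_\ga^{\la,w}$ is just an explicit unwinding of the bijection asserted in the paper, and your valuation bookkeeping in the first step is the same computation the paper performs.
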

\begin{proof}
It remains to show the last statement. Suppose $\la$ lies in the interior of the Weyl chamber and $\la-\mu$ lies in the interior of the dominant coroot cone. Without loss of generality, we may assume that $\det(\ga)=\det(\varpi^\la)$ and hence obtain an element $\ga_\la\in V_G^\la(F)$ as in Lemma~\ref{gamma-lambda-lem}. Let $a=\chi_\la(\ga_\la)\in\fC_+^\la(\cO)$ and consider the following Cartesian diagram
\[\xymatrix{
\chi^{-1}_\la(a)\ar[r]\ar[d] & V_G^\la\ar[d]^{\chi_\la}\\
\spec\cO\ar[r]^a & \fC_+^\la
}\]
For $g\in G(F)$ with $gK\in X_\ga^{\la,\mathrm{reg}}$ where $K=G(\cO)$, let $\overline{g^{-1}\ga g}$ be the reduction mod $\varpi$ of $g^{-1}\ga g\in V_G^{\la,\mathrm{reg}}(\cO)$. The condition that $\la$ lies in the interior of the Weyl chamber means that $\langle\la,\alpha_i\rangle>0$ for all simple roots $\alpha_i$. Hence the special fiber of $V_G^\la$ is the asymptotic semigroup $\mathrm{As}(G)$ and $\overline{g^{-1}\ga g}\in\mathrm{As}(G)$.\par
The assumption that $\la-\mu$ lies in the interior of the positive coroot cone implies that $\langle\la-\mu,\omega_i\rangle>0$ for all fundamental weight $\omega_i$. Therefore the reduction mod $\varpi$ of $a$ equals to $0$ and the special fiber of $\chi_\la^{-1}(a)$ is the nilpotent cone $\cN$. In particular, we get $\overline{g^{-1}\ga g}\in\cN^{\mathrm{reg}}$.\par 
Consequently there is a bijection between $G_\ga(F)$ orbits on $X_\ga^{\la,\mathrm{reg}}$ and $G$ orbits on $\cN^{\mathrm{reg}}$, the latter of which corresponds bijectively to $\mathrm{Cox}(W,S)$ by Proposition~\ref{nilp-cone-prop}.
\end{proof}

As an immediate consequence, we mention the following purely combinatorial result, which might be of independant interest:
\begin{cor}\label{lower-bound-multiplicity-cor}
Let $\la\ge\mu$ be dominant weights of $\hat G$. Suppose that $\la$ lies in the interior of the Weyl chamber and $\la-\mu$ lies in the interior of the positive root cone (the ``wide cone"). Then we have the following lower bound for the weight multiplicity
\[m_{\la\mu}\ge|\mathrm{Cox}(W,S)|\]
where the set $\mathrm{Cox}(W,S)$ is defined in \S\ref{coxeter-definition}.
\end{cor}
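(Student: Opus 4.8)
The plan is to deduce this combinatorial inequality formally from Theorem~\ref{regular-component-upper-bound-thm} and Corollary~\ref{dim-unr-cor}, by producing a single split regular semisimple conjugacy class that simultaneously realizes $|\mathrm{Cox}(W,S)|$ as the number of regular components (up to symmetry) and $m_{\la\mu}$ as the total number of components (up to symmetry). First one identifies $\la$ and $\mu$, which are dominant weights of $\hat G$, with dominant coweights of $G$; under this identification the ``positive root cone for $\hat G$'' in the hypothesis is precisely the positive coroot cone $\sfD$ of $G$, and ``interior of the Weyl chamber'' is unchanged, so the hypotheses here match those in the last clause of Theorem~\ref{regular-component-upper-bound-thm}.

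Next I would choose $\ga\in\varpi^\mu T(\cO)\cap G^{\mathrm{rs}}(F)$; such $\ga$ exists since a generic element of $\varpi^\mu T(\cO)$ satisfies $\alpha(\ga)\ne 1$ for every root $\alpha$. By construction $\ga$ is unramified (split) with Newton point $\nu_\ga=\mu$, and because $\mu$ is already dominant and integral, $\mu$ itself is the unique coweight attached to $\ga$ in Lemma~\ref{integral-approximation-lem}; moreover $\la\ge\mu$, so $X_\ga^\la$ is nonempty by Proposition~\ref{nonempty-prop}. Since $\la$ lies in the interior of the Weyl chamber and $\la-\mu$ lies in the interior of the positive coroot cone, the equality clause of Theorem~\ref{regular-component-upper-bound-thm} applies and gives that the number of $G_\ga(F)$-orbits on $X_\ga^{\la,\mathrm{reg}}$ — equivalently, on $\mathrm{Irr}(X_\ga^{\la,\mathrm{reg}})$, as recalled at the start of \S\ref{regular-components-section} — is exactly $|\mathrm{Cox}(W,S)|$.

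On the other hand, Corollary~\ref{dim-unr-cor} applied to the unramified class $\ga$ with $\nu_\ga=\mu$ shows that the number of $G_\ga(F)$-orbits on $\mathrm{Irr}(X_\ga^\la)$ equals $m_{\la\mu}$. It then remains to compare the two counts, and here the key point is that the open immersion $X_\ga^{\la,\mathrm{reg}}\into X_\ga^\la$ induces a $G_\ga(F)$-equivariant injection $\mathrm{Irr}(X_\ga^{\la,\mathrm{reg}})\into\mathrm{Irr}(X_\ga^\la)$, sending a regular component to the unique component of $X_\ga^\la$ containing its closure (this is the elementary fact about open immersions already used, via \cite[\href{http://stacks.math.columbia.edu/tag/037A}{Tag 037A}]{stacks-project}, in the proof of Theorem~\ref{Y-gamma-thm}). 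Passing to $G_\ga(F)$-orbits preserves the injectivity, so $|\mathrm{Cox}(W,S)|\le m_{\la\mu}$, which is the desired inequality.

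I do not expect a genuine obstacle: the argument is essentially formal once Theorem~\ref{regular-component-upper-bound-thm} and Corollary~\ref{dim-unr-cor} are available. The only steps requiring a little care are the bookkeeping of roots versus coroots when matching the hypotheses across the duality $G\leftrightarrow\hat G$, and the routine verification that a generic element of $\varpi^\mu T(\cO)$ is regular semisimple with Newton point exactly $\mu$. The real content of the statement is that these geometric results yield a lower bound on weight multiplicities for which no direct combinatorial proof is apparent.
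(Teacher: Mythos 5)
Your proposal is correct and follows exactly the paper's argument: take a generic unramified $\ga\in\varpi^\mu T(\cO)\cap G(F)^{\mathrm{rs}}$, count total components via Corollary~\ref{dim-unr-cor} and regular components via Theorem~\ref{regular-component-upper-bound-thm}, and compare. You merely make explicit two points the paper leaves implicit (the injection $\mathrm{Irr}(X_\ga^{\la,\mathrm{reg}})\into\mathrm{Irr}(X_\ga^\la)$ induced by the open immersion, and its compatibility with passing to $G_\ga(F)$-orbits), both of which are correct.
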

\begin{proof}
We take an unramified element $\ga\in\varpi^\mu T(\cO)\cap G(F)^{\mathrm{rs}}$. Then by Corollary~\ref{dim-unr-cor}, the number of $G_\ga(F)$-orbits on $\mathrm{Irr}(X_\ga^\la)$ equals to $m_{\la\mu}$. On the other hand, by Theorem~\ref{regular-component-upper-bound-thm}, the number of $G_\ga(F)$-orbits on $\mathrm{Irr}(X_\ga^{\la,\mathrm{reg}})$ equals to $|\mathrm{Cox}(W,S)|$, hence the inequality.
\end{proof}
\begin{rem}
If the derived group $G_0$ is simple of rank $r$, then $|\mathrm{Cox}(W,S)|=2^{r-1}$. In general, if the simple factors of $G_0$ has rank $r_1,\dotsc,r_m$, then 
\[|\mathrm{Cox}(W,S)|=\prod_{i=1}^m2^{r_i-1}.\]
We expect that there should be a more straightforward proof of Corollary~\ref{lower-bound-multiplicity-cor}. 
\end{rem}
\begin{rem}\label{regular-components-rem}
In general, the weight multiplicity $m_{\la\mu}$ will increase with $\la$ while the right hand side in Corollary~\ref{lower-bound-multiplicity-cor} is a fixed constant independant of $\la,\mu$. Thus in general there will be much more irreducible components in $X_\ga^\la$ than the regular open subvariety $X_\ga^{\la,\mathrm{reg}}$.
\end{rem}

\bibliographystyle{alpha}
\bibliography{myref}

\begin{thebibliography}{GHKR06}

\bibitem[Bez96]{Be96}
Roman Bezrukavnikov.
\newblock The dimension of the fixed point set on affine flag manifolds.
\newblock {\em Math. Res. Lett.}, 3(2):185--189, 1996.

\bibitem[Bou15]{Bou15}
Alexis Bouthier.
\newblock Dimension des fibres de {S}pringer affines pour les groupes.
\newblock {\em Transform. Groups}, 20(3):615--663, 2015.

\bibitem[Bou17]{Bou17}
Alexis Bouthier.
\newblock La fibration de {H}itchin-{F}renkel-{N}g\^o et son complexe
  d'intersection.
\newblock {\em Ann. Sci. \'Ec. Norm. Sup\'er. (4)}, 50(1):85--129, 2017.


\bibitem[BC17]{BC17}
Alexis Bouthier and Jingren Chi.
\newblock Erratum to ``{D}imension des fibres de {S}pringer affines pour les
  groupes".
\newblock \url{http://math.uchicago.edu/~chi/dimension-erratum.pdf}, 2017.


\bibitem[GHKR06]{GHKR}
Ulrich G\"ortz, Thomas~J. Haines, Robert~E. Kottwitz, and Daniel~C. Reuman.
\newblock Dimensions of some affine {D}eligne-{L}usztig varieties.
\newblock {\em Ann. Sci. \'Ecole Norm. Sup. (4)}, 39(3):467--511, 2006.

\bibitem[He06]{He06}
Xuhua He.
\newblock Unipotent variety in the group compactification.
\newblock {\em Advances in Mathematics}, 203(1):109 -- 131, 2006.

\bibitem[HV17]{HaVi17}
Paul Hamacher and Eva Viehmann.
\newblock Irreducible components of minuscule affine deligne-lusztig varieties.
\newblock \url{https://arxiv.org/abs/1702.08287}, 2017.

\bibitem[KL88]{KL}
D.~Kazhdan and G.~Lusztig.
\newblock Fixed point varieties on affine flag manifolds.
\newblock {\em Isra\"el J. Math.}, 62(2):129--168, 1988.

\bibitem[KV12]{KoV}
Robert Kottwitz and Eva Viehmann.
\newblock Generalized affine {S}pringer fibres.
\newblock {\em J. Inst. Math. Jussieu}, 11(3):569--609, 2012.

\bibitem[Lus04a]{Lu04}
George Lusztig.
\newblock Parabolic character sheaves. {I}.
\newblock {\em Mosc. Math. J.}, 4(1):153--179, 311, 2004.

\bibitem[Lus04b]{Lu04b}
George Lusztig.
\newblock Parabolic character sheaves. {II}.
\newblock {\em Mosc. Math. J.}, 4(4):869--896, 981, 2004.

\bibitem[Lus15]{Lu15}
George Lusztig.
\newblock Unipotent almost characters of simple {$p$}-adic groups.
\newblock {\em Ast\'erisque}, (370):243--267, 2015.

\bibitem[Ng{\^o}10]{Ngo10}
Bao~Ch{\^au} Ng{\^o}.
\newblock Le lemme fondamental pour les alg\`ebres de {L}ie.
\newblock {\em Publ. Math. Inst. Hautes \'Etudes Sci.}, (111):1--169, 2010.

\bibitem[Rit01]{Rit01}
Alvaro Rittatore.
\newblock Very flat reductive monoids.
\newblock {\em Publ. Mat. Urug.}, 9:93--121 (2002), 2001.

\bibitem[{Sta}17]{stacks-project}
The {Stacks Project Authors}.
\newblock Stacks {P}roject.
\newblock \url{http://stacks.math.columbia.edu}, 2017.

\bibitem[Ste65]{St65}
Robert Steinberg.
\newblock Regular elements of semisimple algebraic groups.
\newblock {\em Inst. Hautes \'Etudes Sci. Publ. Math.}, (25):49--80, 1965.

\bibitem[XZ17]{XiaoZhu17}
Liang Xiao and Xinwen Zhu.
\newblock Cycles on shimura varieties via geomtric {S}atake.
\newblock \url{https://arxiv.org/abs/1707.05700}, 2017.

\bibitem[Yun15]{Yun15}
Zhiwei Yun.
\newblock Lectures on {S}pringer theories and orbital integrals.
\newblock {\em IAS/Park City Mathematics Series}, 2015.

\bibitem[Zhu17]{Zhu17}
Xinwen Zhu.
\newblock Affine {G}rassmannians and the geometric {S}atake in mixed
  characteristic.
\newblock {\em Ann. of Math. (2)}, 185(2):403--492, 2017.

\end{thebibliography}
\end{document}